\newtheorem{theorem}{Theorem}[section]
\newtheorem{lemma}[theorem]{Lemma}
\newtheorem{corollary}[theorem]{Corollary}
\newtheorem{example}[theorem]{Example}
\newtheorem{proposition}[theorem]{Proposition}
\newtheorem{remark}[theorem]{Remark}
\title[The Cuntz-Krieger relations]{The universal Boolean inverse semigroup presented by the abstract Cuntz-Krieger relations}
\author{Mark V. Lawson}
\address{Mark V. Lawson, Department of Mathematics
and the
Maxwell Institute for Mathematical Sciences, 
Heriot-Watt University,
Riccarton,
Edinburgh EH14 4AS, 
UNITED KINGDOM}
\email{m.v.lawson@hw.ac.uk}
\author{Alina Vdovina}
\address{Alina Vdovina, 
School of Mathematics, Statistics and Physics,
Herschel Building,
University of Newcastle,
Newcastle-upon-Tyne NE1 7RU,
UNITED KINGDOM}
\email{alina.vdovina@ncl.ac.uk}
\begin{document} 

\begin{abstract} 
This paper is a contribution to the theory of what might be termed $0$-dimensional non-commutative spaces.
We prove that associated with each inverse semigroup $S$ is a Boolean inverse semigroup presented by the abstract versions of the Cuntz-Krieger relations.
We call this Boolean inverse semigroup the Exel completion of $S$ and show that it arises from Exel's tight groupoid under non-commutative Stone duality.
\end{abstract}
\maketitle

\section{Introduction}

In this section, we explain the philosophy behind this paper and provide the context for the two theorems (Theorem~\ref{them:three} and Theorem~\ref{them:four}) that we prove;
any undefined terms will be defined later in this paper. 

The theory of $C^{\ast}$-algebras is the theory of non-commutative spaces.
The term `non-commutative space' is mathematical legerdemain --- there is no actual space
in the background, unlike in the case of commutative $C^{\ast}$-algebras;
instead, the $C^{\ast}$-algebra is itself a proxy for what is absent.
For some $C^{\ast}$-algebras, however, there is an honest-to-goodness space, to be regarded as an actual non-commu\-tative space, from which they are constructed.
These are the \'etale groupoid $C^{\ast}$-algebras of Renault \cite{Renault}
which include amongst their number many interesting and important examples \cite{Renault, K, Paterson, Kellendonk1, Kellendonk2, Exel}.
It is often the case that the \'etale groupoids that occur in constructing such $C^{\ast}$-algebras
are those  whose spaces of identities are locally compact Boolean spaces
---
by which we mean $0$-dimensional, locally compact Hausdorff spaces.
A prime example of such a space, and one which occurs repeatedly in the theory of $C^{\ast}$-algebras,
is the Cantor space.
Thus locally compact Boolean spaces are natural generalizations of the Cantor space.
Define a Boolean groupoid to be an \'etale groupoid whose space of identities is a locally compact Boolean space.
Boolean groupoids are therefore examples of what can be regarded as (concrete) $0$-dimensional, non-commutative spaces.

Most of the time, Boolean groupoids are studied {\em tout court} but, in fact, they have algebraic {\em doppelg\"angers}.
It is a classical theorem due to Marshall Stone \cite{Stone1936, Stone1937a, Stone1937b} (and sketched in Section~3) that locally compact Boolean spaces stand in duality to generalized Boolean algebras:
from a generalized Boolean algebra, a locally compact Boolean space, called its Stone space, can be constructed from its set of ultrafilters,
and from a locally compact Boolean space, a generalized Boolean algebra can be constructed whose elements are the compact-open sets of the space.
This classical duality, which can be viewed as being commutative in nature, has been generalized to
a non-commutative setting \cite{Resende2, Law3, Law5, LL, KL, LMS} (and sketched in Section~4):
locally compact Boolean spaces are replaced by Boolean groupoids, and generalized Boolean algebras by what we call Boolean inverse semigroups.
Just as in the classical case, from a Boolean inverse semigroup, a Boolean groupoid, called its Stone groupoid, can be constructed from its set of ultrafilters
and from a Boolean groupoid, a Boolean inverse semigroup can be constructed whose elements are the compact-open partial bisections.
This result suggests two lines of research: 
\begin{enumerate}
\item Develop the theory of Boolean inverse semigroups as the non-commutative theory of Boolean algebras.
\item Reinterpret results about Boolean groupoids as results about Boolean inverse semigroups (and vice versa).
\end{enumerate}
The starting point for this paper are two theorems that belong, respectively, to precisely these two lines of research.
The first is a theorem \cite{LL,Lawson2018} which generalizes a well-known result in the theory of Boolean algebras:
namely, that associated with every distributive lattice is a universal Boolean algebra into which it may be embedded \cite{J}.\\

\noindent
{\bf Terminology. }The inverse semigroups in this paper will always have a zero and homomorphisms between them will always be required to preserve it.
In addition, homomorphisms between monoids will always be required to map identities to identities.
If we say `semigroup' we mean that we do not assume there is an identity.
We shall use the term `Boolean algebra' rather than `generalized Boolean algebra'
and `unital Boolean algebra' for what is usually termed a `Boolean algebra'.
In particular, a `Boolean inverse semigroup' will therefore have a semilattice of idempotents which is a generalized Boolean algebra ---
we do not assume it has an identity.\\

\begin{theorem}[Booleanization]\label{them:one} From each inverse semigroup $S$, we may construct a Boolean inverse semigroup $\mathsf{B}(S)$,
called its {\em Booleanization}, together with a (semigroup) homomorphism $\beta \colon S \rightarrow \mathsf{B}(S)$ which is universal for
homomorphisms from $S$ to Boolean inverse semigroups;
this means precisely that if $\theta \colon S \rightarrow T$ is any homomorphism to a Boolean inverse semigroup $T$, then there is a unique morphism 
$\theta' \colon \mathsf{B}(S) \rightarrow T$ of Boolean inverse semigroups such that $\theta' \beta = \theta$.
\end{theorem}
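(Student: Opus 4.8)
The plan is to construct $\mathsf{B}(S)$ in two stages, imitating the classical route from a distributive lattice to its universal Boolean algebra, and then to compose the resulting universal properties.

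\emph{Stage 1: the distributive completion.} First I would build a distributive inverse semigroup $\mathsf{D}(S)$, together with a homomorphism $\delta \colon S \to \mathsf{D}(S)$, universal among homomorphisms from $S$ into distributive inverse semigroups. A concrete model takes as elements of $\mathsf{D}(S)$ the finite, pairwise-compatible subsets of $S$, two such subsets being identified when they generate the same finitely generated compatible order ideal; multiplication is induced setwise, $A\cdot B = \{ab : a\in A,\ b\in B\}$, and inversion by $A^{-1} = \{a^{-1} : a\in A\}$. One checks that these operations are well defined, that $\mathsf{D}(S)$ is a distributive inverse semigroup in which $A = \bigvee_{a\in A}\{a\}$ and finite compatible joins distribute over products, and that its semilattice of idempotents is the distributive completion of the meet-semilattice $E(S)$. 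The map $\delta \colon s \mapsto \{s\}$ is then universal: a homomorphism $\theta \colon S \to U$ into a distributive inverse semigroup extends by $A \mapsto \bigvee_{a\in A}\theta(a)$, and the extension is unique because $\delta(S)$ join-generates $\mathsf{D}(S)$.

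\emph{Stage 2: Booleanizing a distributive inverse semigroup.} Given a distributive inverse semigroup $U$ — which below will be $U = \mathsf{D}(S)$ — the semilattice $E(U)$ is a distributive lattice with zero and hence, by the classical theorem (see \cite{J}), embeds in a universal generalized Boolean algebra. I would then lift this Boolean reflection to the whole of $U$. Two routes are available: (i) present the target by the image of $U$ as generators together with the relations that adjoin the missing relative complements of idempotents, and verify that the quotient is again an inverse semigroup, distributive, and now Boolean; or (ii) pass to the dual groupoid of $U$ under non-commutative Stone duality, take the patch topology on its (spectral) space of units to obtain a Boolean groupoid, and let $\mathsf{B}(U)$ be the associated Boolean inverse semigroup of compact-open partial bisections. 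Either way, the key structural fact is a normal form: every element of $\mathsf{B}(U)$ is a finite compatible join of elements of the shape $e\,u$ with $u \in U$ and $e$ in the generalized Boolean algebra generated by $E(U)$ satisfying $e \leq uu^{-1}$. Recalling that a morphism of Boolean inverse semigroups is exactly a semigroup homomorphism preserving finite compatible joins — it then automatically preserves relative complements of idempotents, uniqueness of complements in a Boolean algebra forcing this — the normal form supplies both the universal extension and its uniqueness.

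\emph{Conclusion.} Set $\mathsf{B}(S) := \mathsf{B}(\mathsf{D}(S))$ and let $\beta$ be the composite $S \xrightarrow{\delta} \mathsf{D}(S) \to \mathsf{B}(\mathsf{D}(S))$. If $\theta \colon S \to T$ is a homomorphism with $T$ Boolean, then $T$ is in particular distributive, so $\theta$ factors uniquely through $\mathsf{D}(S)$ and then uniquely through its Booleanization; composing gives the required $\theta'$, whose uniqueness is inherited from the two stages. I expect Stage 2 to be the main obstacle: the delicate point is that adjoining the relative complements needed to Booleanize $E(U)$ is compatible with the partial multiplication and with the maps $e \mapsto ueu^{-1}$ on idempotents, so that one genuinely obtains an inverse semigroup rather than merely a Boolean algebra carrying extra structure; establishing the normal form precisely enough to force uniqueness is the other subtle part. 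Stage 1, by contrast, is a lengthy but essentially routine verification.
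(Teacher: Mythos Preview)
Your two-stage strategy is sound in principle but takes a genuinely different route from the paper. In Section~6 the paper constructs $\mathsf{B}(S)$ in a single step: form the groupoid $\mathcal{L}(S)$ of proper filters of $S$, embed $S$ into the Boolean inverse semigroup $\mathsf{L}(\mathcal{L}(S))$ of all partial bisections via $a \mapsto U_{a}$ (the set of proper filters containing $a$), and take $\mathsf{B}(S)$ to be the sub-Boolean-inverse-semigroup consisting of finite compatible unions of sets $U_{a;a_{1},\ldots,a_{m}} = U_{a} \cap U_{a_{1}}^{c} \cap \cdots \cap U_{a_{m}}^{c}$. The extension of $\theta \colon S \to T$ is then given by the explicit formula $\theta'(U_{a;a_{1},\ldots,a_{m}}) = \theta(a) \setminus (\theta(a_{1}) \vee \cdots \vee \theta(a_{m}))$, and uniqueness is read off from this normal form.

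The two routes are related: your Stage~2 option~(ii) is essentially the paper's construction applied to $\mathsf{D}(S)$ instead of to $S$, and proper filters on $S$ correspond to prime filters on $\mathsf{D}(S)$, so the endpoints coincide. What the paper's direct approach buys is that the normal form and the formula for $\theta'$ appear immediately, without first having to verify that the quotient of finite compatible subsets by order-ideal equivalence really is a distributive inverse semigroup. What your approach buys is a clean conceptual factorization into two adjunctions. One caution: you describe Stage~1 as ``lengthy but essentially routine,'' but checking that compatibility and the natural partial order on equivalence classes of finite compatible subsets behave as required (in particular, that $[A] \sim [B]$ in $\mathsf{D}(S)$ if and only if $A \cup B$ can be represented by a compatible subset of $S$) is where this style of construction typically needs the most care; it is not hard, but it is where mistakes tend to hide.
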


The second theorem then answers the question of what the Stone groupoid of the Booleanization is \cite{LL,Lawson2018}.

\begin{theorem}[The universal groupoid]\label{them:two} The Stone groupoid of the Booleanization $\mathsf{B}(S)$
is Paterson's {\em universal groupoid} $\mathsf{G}_{u}(S)$.
\end{theorem}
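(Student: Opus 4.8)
The plan is to compute the Stone groupoid of $\mathsf{B}(S)$ directly from the explicit description of $\mathsf{B}(S)$ furnished by the proof of Theorem~\ref{them:one}, and then to recognize the resulting Boolean groupoid as Paterson's groupoid of germs. Recall that, under non-commutative Stone duality, the Stone groupoid of a Boolean inverse semigroup $B$ has for its space of identities the Stone space of the generalized Boolean algebra $E(B)$, that is, the set of ultrafilters of $E(B)$ topologized by the sets of ultrafilters containing a fixed idempotent; it has for its space of arrows the set of all ultrafilters of $B$, topologized by the sets $V_{b} = \{\mathcal{A} : b \in \mathcal{A}\}$ for $b \in B$; the product of two composable ultrafilters $\mathcal{A}$ and $\mathcal{B}$ is the upward closure of $\{ab : a \in \mathcal{A},\ b \in \mathcal{B}\}$, and the inverse of $\mathcal{A}$ is $\{a^{\ast} : a \in \mathcal{A}\}$.

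The first step is to identify the space of identities. Since $\mathsf{B}(S)$ is generated as a Boolean inverse semigroup by the image $\beta(S)$, one shows from the construction that its semilattice of idempotents $E(\mathsf{B}(S))$ is the generalized Boolean algebra freely generated by the meet-semilattice $E(S)$, equivalently the Booleanization of the distributive lattice obtained from $E(S)$ by freely adjoining finite joins. Classical Stone duality for distributive lattices then identifies the Stone space of this generalized Boolean algebra with the space of prime filters of that distributive lattice, and these correspond bijectively and homeomorphically with the proper filters of the semilattice $E(S)$, which is precisely the space of semicharacters forming the unit space of $\mathsf{G}_{u}(S)$; so the unit spaces agree.

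The second, and main, step is the identification of the arrows. Here I would invoke the germ description of the ultrafilters of a Boolean inverse semigroup $B$: each ultrafilter of $B$ is a germ $[b,\mathcal{F}]$, where $\mathcal{F}$ is an ultrafilter of $E(B)$ with the domain idempotent of $b$ lying in $\mathcal{F}$, and $[b,\mathcal{F}] = [c,\mathcal{F}]$ precisely when $be = ce$ for some $e \in \mathcal{F}$, the product and inverse taking the expected form. Applying this to $B = \mathsf{B}(S)$, and using the identification of the idempotent ultrafilters with the filters of $E(S)$ from the first step, it then remains to show that each germ $[b,\mathcal{F}]$ can be rewritten as $[\beta(s),\mathcal{F}]$ for some $s \in S$ with $s^{\ast}s \in F$ (where $F$ is the filter of $E(S)$ corresponding to $\mathcal{F}$), that the assignment $[\beta(s),\mathcal{F}] \mapsto [s,F]$ is a well-defined bijection onto the germs of $\mathsf{G}_{u}(S)$, and that it carries the product, the inverse and the basic open sets $V_{\beta(s)e'}$ to the product, inverse and basic open sets $\{[s,F] : e \in F\}$ of $\mathsf{G}_{u}(S)$, using that every element below $\beta(s)$ has the form $\beta(s)e'$.

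The step requiring the most care is the rewriting $[b,\mathcal{F}] = [\beta(s),\mathcal{F}]$, and with it the fact that every ultrafilter of $\mathsf{B}(S)$ contains some $\beta(s)$, so that the germs coming from $S$ genuinely exhaust the arrow space. This rests on the normal-form statement, to be extracted from the construction in Theorem~\ref{them:one}, that every element of $\mathsf{B}(S)$ is a finite compatible join of elements each lying below some $\beta(s)$: given such a join in an ultrafilter $\mathcal{A}$, primeness of $\mathcal{A}$ with respect to compatible joins puts one joinand into $\mathcal{A}$, and upward closure then puts the corresponding $\beta(s)$ into $\mathcal{A}$; the independence of the construction from the chosen representative follows because any two elements of an ultrafilter are compatible. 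Once this and the normal form are in place, matching the groupoid operations and the two topologies is routine but lengthy bookkeeping.
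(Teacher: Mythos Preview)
Your argument is essentially correct, but it takes a different route from the paper's, and it is worth seeing why the paper's route is shorter.

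The paper does not prove Theorem~\ref{them:two} in detail; it is cited from \cite{LL,Lawson2018}. However, the construction sketched in Section~6 makes the result almost immediate. There $\mathsf{B}(S)$ is not built abstractly and then compared with $\mathsf{G}_{u}(S)$; rather, one first forms the (discrete) groupoid $\mathcal{L}(S)$ of \emph{proper filters} of $S$, and $\mathsf{B}(S)$ is \emph{defined} as the inverse subsemigroup of $\mathsf{L}(\mathcal{L}(S))$ consisting of finite compatible unions of the basic sets $U_{a;a_{1},\ldots,a_{m}}$. These basic sets are exactly a basis for the patch topology on $\mathcal{L}(S)$, and $\mathcal{L}(S)$ with this topology \emph{is} Paterson's universal groupoid (this is the filter description of $\mathsf{G}_{u}(S)$ from \cite{LMS}, which the paper adopts throughout). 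So by construction $\mathsf{B}(S) = \mathsf{KB}(\mathsf{G}_{u}(S))$, and non-commutative Stone duality (Theorem~\ref{them:ncsd}) then gives $\mathsf{G}(\mathsf{B}(S)) \cong \mathsf{G}_{u}(S)$ at once.

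Your approach instead treats $\mathsf{B}(S)$ as given only by its universal property, computes its ultrafilters via the germ picture, and matches them with Paterson's original \emph{germ} model of $\mathsf{G}_{u}(S)$. This is valid---your key step, that every ultrafilter of $\mathsf{B}(S)$ contains some $\beta(s)$ because each element is a finite compatible join of elements below some $\beta(s)$ and ultrafilters are prime, is exactly right and is precisely the normal form recorded in Section~6. What it costs you is that you must (i) separately identify $E(\mathsf{B}(S))$ with the commutative Booleanization of $E(S)$ and its Stone space with the filter space of $E(S)$, and (ii) implicitly invoke the equivalence between Paterson's germ groupoid and the proper-filter groupoid of \cite{LMS}. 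The paper's construction sidesteps both by building $\mathsf{B}(S)$ inside the target groupoid from the outset.
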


Paterson's universal groupoid is described in his book \cite{Paterson}.
In fact, his construction came first and it was as a result of thinking about what he was doing that the above theorem came to be proved.
This, then, is the conceptual background to our paper.
We can now turn to the two particular results that we prove here;
they will also exemplify the two lines of research mentioned above
and each can be seen as a specialization of the above two theorems.

The papers of Cuntz and Krieger \cite{Cuntz, CK} led to the idea of building $C^{\ast}$-algebras from combinatorial structures.
Central to this work has been the presentation of certain $C^{\ast}$-algebras by means of `Cuntz-Krieger relations'.
The goal of our paper can now be explicitly stated: it is to describe in abstract terms exactly what these relations are.
There are two new results.

Our first new result is an application of Theorem~\ref{them:one} and uses the theory of ideals of Boolean inverse semigroups
described in \cite{Wehrung}. It is based on two ideas: that of a cover of an element and that of a cover-to-join map.
(In fact, covers and cover-to-join maps are important features of frame theory \cite{J} whereas non-commutative Stone duality
can be regarded as part of non-commutative frame theory.)
The notion of a cover was developed in a sequence of papers \cite{Lawson2010,Law5,LL} but was rooted in the seminal papers by Exel \cite{Exel} and Lenz \cite{Lenz}.
A subset $\{a_{1}, \ldots, a_{m}\}$ of the principal order ideal generated by the element $a$ is a {\em cover} of $a$
if for each $0 \neq x \leq a$ there exists $1 \leq i \leq m$ such that $x \wedge a_{i} \neq 0$.
(As an aside, observe that in an inverse semigroup, compatible elements have meets \cite[Lemma~1.4.11]{Law1} and all the elements of a principal order ideal are compatible.)\\

\noindent
{\bf Terminology. }Our use of the word `cover' is a special case of the way this word is used in \cite{Exel}.
Observe that we only use covers that are contained in principal order ideals.\\

The notion of a cover in an arbitrary inverse semigroup is a weakening of the notion of a join.
A cover-to-join map from an inverse semigroup to a Boolean inverse semigroup converts covers to joins: thus, it converts such potential joins to actual joins.
It is the claim of this paper that covers {\em are} the abstract form of the concrete Cuntz-Krieger relations that arise in particular examples.
This claim will be justified in Section~11.
The inverse semigroup $S$ is embedded in its Booleanization $\mathsf{B}(S)$ so we may identify $S$ with its image.
Let $\{a_{1}, \ldots, a_{m}\}$ be a cover of $a$.
Then, in particular, $\{a_{1}, \ldots, a_{m}\}$ is a compatible set in $S$ and so will have a join in $\mathsf{B}(S)$.
{\em Inside} $\mathsf{B}(S)$, we of course have that $a_{1} \vee \ldots \vee a_{m} \leq a$.
It follows that the element $a \setminus (a_{1} \vee \ldots \vee a_{m})$ is defined in  $\mathsf{B}(S)$.
Let $I$ be the additive ideal 
of  $\mathsf{B}(S)$ generated by these elements.
We call $I$ the {\em Cuntz-Krieger ideal} of $\mathsf{B}(S)$.
Put $\mathsf{T}(S) = \mathsf{B}(S)/I$ and let $\tau \colon S \rightarrow \mathsf{T}(S)$ be the natural map. 
We call $\mathsf{T}(S)$ the {\em Exel completion} of $S$.

\begin{theorem}[Exel completion]\label{them:three} Let $S$ be an inverse semigroup.
Then $\tau \colon S \rightarrow \mathsf{T}(S)$ is a cover-to-join map which is universal for all cover-to-join
maps from $S$ to Boolean inverse semigroups;
this means precisely that for each cover-to-join map $\theta \colon S \rightarrow T$ to a Boolean inverse semigroup $T$ there is a unique
morphism $\theta' \colon \mathsf{T}(S) \rightarrow T$ of Boolean inverse semigroups such that $\theta' \tau = \theta$.
\end{theorem}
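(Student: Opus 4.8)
The plan is to exhibit $\mathsf{T}(S)$ as a quotient of the Booleanization and then chase the two universal properties already at our disposal. Throughout I identify $S$ with $\beta(S) \subseteq \mathsf{B}(S)$, as in the discussion preceding the theorem, and I write $q \colon \mathsf{B}(S) \rightarrow \mathsf{T}(S)$ for the quotient map, so that $\tau = q\beta$. Besides Theorem~\ref{them:one}, I shall use the ideal theory of Boolean inverse semigroups from \cite{Wehrung}: namely that $q$ is a morphism of Boolean inverse semigroups sending $I$ to $0$, that it is universal among morphisms of Boolean inverse semigroups out of $\mathsf{B}(S)$ that annihilate $I$, and that the preimage of $0$ under any morphism of Boolean inverse semigroups is an additive ideal. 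I also use the elementary fact that a morphism of Boolean inverse semigroups preserves the difference operation: from $x \leq a$ one has $a = x \vee (a \setminus x)$ with $x \perp (a \setminus x)$, and this characterization of $a \setminus x$ is transported by any morphism.

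First I would check that $\tau$ is a cover-to-join map. It is a homomorphism, being a composite of homomorphisms. Let $\{a_{1}, \ldots, a_{m}\}$ be a cover of $a$ in $S$. A principal order ideal consists of pairwise compatible elements and $\beta$ preserves compatibility, so $\{a_{1}, \ldots, a_{m}\}$ is compatible in $\mathsf{B}(S)$ and $\{\tau(a_{1}), \ldots, \tau(a_{m})\}$ is compatible in $\mathsf{T}(S)$; thus the join $\tau(a_{1}) \vee \ldots \vee \tau(a_{m})$ exists there. By the definition of the generator $e := a \setminus (a_{1} \vee \ldots \vee a_{m}) \in I$ we have $a = (a_{1} \vee \ldots \vee a_{m}) \vee e$ in $\mathsf{B}(S)$, and applying $q$ together with $q(e) = 0$ gives $\tau(a) = \tau(a_{1}) \vee \ldots \vee \tau(a_{m})$. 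Hence $\tau$ converts covers to joins.

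Now let $\theta \colon S \rightarrow T$ be a cover-to-join map to a Boolean inverse semigroup $T$; in particular it is a homomorphism, so Theorem~\ref{them:one} provides a unique morphism $\bar{\theta} \colon \mathsf{B}(S) \rightarrow T$ of Boolean inverse semigroups with $\bar{\theta}\beta = \theta$. The crux of the argument is that $\bar{\theta}$ annihilates $I$. Since $\bar{\theta}^{-1}(0)$ is an additive ideal of $\mathsf{B}(S)$, it suffices to show that $\bar{\theta}$ kills each generator $e = a \setminus (a_{1} \vee \ldots \vee a_{m})$ of $I$ associated with a cover $\{a_{1}, \ldots, a_{m}\}$ of $a$. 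As $\bar{\theta}$ preserves joins and the difference operation, and $\bar{\theta}(a_{i}) = \theta(a_{i})$ and $\bar{\theta}(a) = \theta(a)$, we obtain $\bar{\theta}(e) = \theta(a) \setminus (\theta(a_{1}) \vee \ldots \vee \theta(a_{m}))$, and this is $0$ exactly because $\theta$, being cover-to-join, satisfies $\theta(a_{1}) \vee \ldots \vee \theta(a_{m}) = \theta(a)$. Therefore $I \subseteq \bar{\theta}^{-1}(0)$, and by the universal property of $q$ there is a unique morphism $\theta' \colon \mathsf{T}(S) \rightarrow T$ with $\theta' q = \bar{\theta}$; then $\theta'\tau = \theta' q \beta = \bar{\theta}\beta = \theta$. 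Uniqueness of $\theta'$ subject to $\theta'\tau = \theta$ follows from the uniqueness clauses of Theorem~\ref{them:one} and of the quotient: if $\theta''\tau = \theta$, then $\theta'' q \beta = \theta$ forces $\theta'' q = \bar{\theta}$, which forces $\theta'' = \theta'$.

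There is no deep obstacle here: the theorem is essentially an assembly of Theorem~\ref{them:one} with Wehrung's ideal theory. The one step that genuinely requires verification is the crux above, that the morphism $\bar{\theta}$ induced by a cover-to-join map must annihilate the Cuntz-Krieger ideal; this rests on the two basic facts used there, that morphisms of Boolean inverse semigroups preserve differences and that their kernels are additive ideals.
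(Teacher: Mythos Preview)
Your proof is correct and follows essentially the same route as the paper: construct $\mathsf{T}(S)$ as the quotient $\mathsf{B}(S)/I$, verify that $\tau=q\beta$ is cover-to-join, factor an arbitrary cover-to-join map $\theta$ first through the Booleanization (Theorem~\ref{them:one}) and then through the quotient by checking $I\subseteq\ker(\bar\theta)$, and deduce uniqueness. The only stylistic differences are that the paper works with the explicit elements $U_{a;a_{1},\ldots,a_{m}}$ and proves by hand (Lemma~\ref{lem:tea}) that their compatible joins already form an additive ideal, whereas you simply take the additive ideal generated (as in the Introduction) and appeal to Wehrung; and your uniqueness argument, chaining the uniqueness clauses of Theorem~\ref{them:one} and of the quotient, is a little cleaner than the paper's hands-on computation on generators.
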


The Exel completion of an inverse semigroup $S$ should be regarded as the Boolean inverse semigroup generated by $S$ subject to the
abstract Cuntz-Krieger relations.
Our second new result, which is the main theorem of this paper, is a description of the Stone groupoid of the Exel completion of $S$.
This involves what is termed the tight groupoid $\mathsf{G}_{t}(S)$ of an inverse semigroup $S$, introduced in \cite{Exel};
it will be explicitly defined at the beginning of Section~9.

\begin{theorem}\label{them:four}  Let $S$ be an inverse semigroup.
Then the Stone groupoid of the Exel completion $\mathsf{T}(S)$ of $S$ is the tight groupoid $\mathsf{G}_{t}(S)$.
\end{theorem}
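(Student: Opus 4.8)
The plan is to use non-commutative Stone duality together with Theorem~\ref{them:two} and the theory of additive ideals of Boolean inverse semigroups. The key duality fact I would invoke is that, for a Boolean inverse semigroup $\mathsf{B}$ with Stone groupoid $\mathsf{G}$ and an additive ideal $I$ of $\mathsf{B}$, the Stone groupoid of the quotient $\mathsf{B}/I$ is the restriction $\mathsf{G}|_{Y}$ of $\mathsf{G}$ to the closed invariant subspace $Y = \mathsf{G}^{(0)} \setminus U_{I}$ of the unit space, where $U_{I} = \bigcup_{e \in E(I)} \widehat{e}$ is the open invariant subspace determined by the basic compact-open sets $\widehat{e}$ corresponding to the idempotents of $I$; this is the non-commutative form of the classical fact that $B/J$ has Stone space the closed complement of the open set determined by the ideal $J$, and it follows by combining the non-commutative Stone duality of \cite{LL, LMS} with the treatment of additive ideals and quotients in \cite{Wehrung} (noting that $Y$, being a closed subspace of a locally compact Boolean space, is again locally compact Boolean, so $\mathsf{G}|_{Y}$ is again a Boolean groupoid). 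Applying this with $\mathsf{B} = \mathsf{B}(S)$, so that $\mathsf{G} = \mathsf{G}_{u}(S)$ by Theorem~\ref{them:two}, and $I$ the Cuntz-Krieger ideal, reduces the theorem to identifying $Y = \mathsf{G}_{u}(S)^{(0)} \setminus U_{I}$ with the space of tight filters and then recognising $\mathsf{G}_{u}(S)|_{Y}$ as $\mathsf{G}_{t}(S)$.

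The first step in that identification is to compute $E(I)$. An additive ideal is determined by, and as an ideal of its semilattice of idempotents is generated by, the idempotents $d^{-1}d,\, dd^{-1}$ of its generators $d$; and the Cuntz-Krieger ideal is generated by the elements $d_{a} = a \setminus (a_{1} \vee \cdots \vee a_{m})$ as $\{a_{1}, \ldots, a_{m}\}$ runs over covers of $a$. I would first check the short computation that $d_{a}^{-1}d_{a} = a^{-1}a \setminus (a_{1}^{-1}a_{1} \vee \cdots \vee a_{m}^{-1}a_{m})$ and that $\{a_{1}^{-1}a_{1}, \ldots, a_{m}^{-1}a_{m}\}$ is again a cover, now of the idempotent $a^{-1}a$. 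Using that the downset of an idempotent $e$ in $S$ consists entirely of idempotents --- because any element below $e$ has the form $ef$ with $f \in E(S)$, and $E(S)$ is commutative --- one sees that a cover of an idempotent in the sense of this paper is precisely an Exel cover inside $E(S)$. Hence $E(I)$ is the additive ideal of $E(\mathsf{B}(S)) = \mathsf{B}(E(S))$ generated by the elements $e \setminus (e_{1} \vee \cdots \vee e_{m})$, one for each Exel cover $\{e_{1}, \ldots, e_{m}\}$ of an idempotent $e$ of $S$.

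Next, since $q \mapsto \widehat{q}$ carries joins to unions and relative complements to relative complements, and every element of $E(I)$ lies below a finite join of the generators, one gets $U_{I} = \bigcup \big(\widehat{e} \setminus (\widehat{e_{1}} \cup \cdots \cup \widehat{e_{m}})\big)$, the union taken over all Exel covers $\{e_{1}, \ldots, e_{m}\}$ of all idempotents $e$. A filter $F$ on $E(S)$ thus lies in $U_{I}$ exactly when there is some $e \in F$ and some cover $\{e_{1}, \ldots, e_{m}\}$ of $e$ with no $e_{i} \in F$; equivalently, $F \notin U_{I}$ exactly when for every $e \in F$ and every cover of $e$ the filter $F$ meets the cover --- which is precisely the definition of a tight filter. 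Therefore $Y = \mathsf{G}_{u}(S)^{(0)} \setminus U_{I}$, with its subspace topology, is the space of tight filters, that is, $\mathsf{G}_{t}(S)^{(0)}$. Finally, restricting the germ description of $\mathsf{G}_{u}(S)$ to the invariant set of tight filters yields exactly Exel's germ groupoid $\mathsf{G}_{t}(S)$, both as a groupoid and as a topological space; hence the Stone groupoid of $\mathsf{T}(S) = \mathsf{B}(S)/I$ is $\mathsf{G}_{u}(S)|_{Y} = \mathsf{G}_{t}(S)$, as claimed.

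I expect the main obstacle to be the bookkeeping in the two middle steps: correctly matching the notion of cover used here (covers lying in principal order ideals of $S$) with Exel's notion of cover on the semilattice $E(S)$, and carrying out the quantifier chase that turns ``not in the open invariant set $U_{I}$'' into ``tight filter'', while making sure that the passage from the ideal $I$ to the open invariant subspace $U_{I}$, and from the quotient to the closed restriction, is precisely what the cited duality and ideal theory deliver. An alternative, more self-contained route would avoid the ideal entirely: exhibit the Boolean inverse semigroup of compact-open partial bisections of $\mathsf{G}_{t}(S)$ together with the natural map $S \to \mathsf{B}(S)'$ sending $s$ to the set of germs $[s,F]$ with $F$ a tight filter and $s^{-1}s \in F$, show this map is a cover-to-join map that is universal among such, and then conclude from Theorem~\ref{them:three} and the uniqueness of universal objects that this Boolean inverse semigroup is isomorphic to $\mathsf{T}(S)$, hence by non-commutative Stone duality that $\mathsf{G}_{t}(S)$ is the Stone groupoid of $\mathsf{T}(S)$.
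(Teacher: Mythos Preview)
Your proposal is correct and follows essentially the same route as the paper: both invoke Theorem~\ref{them:two} to identify $\mathsf{G}(\mathsf{B}(S))$ with $\mathsf{G}_{u}(S)$, use the dual correspondence between additive ideals and closed invariant subspaces of the unit space (the paper's Lemma~5.5 and Proposition~5.10), and then verify that under this correspondence the Cuntz--Krieger ideal matches the space of tight filters via the equivalence ``$U_{a;a_{1},\ldots,a_{m}}$ contains no tight filter $\iff \{a_{1},\ldots,a_{m}\}\rightarrow a$''. The only cosmetic difference is direction: the paper starts from the closed invariant subspace $\partial S$ and computes that $I_{\partial S}$ equals the Cuntz--Krieger ideal, whereas you start from the ideal and compute that the complementary closed set is exactly the tight filters; your early reduction to idempotent covers (via $d_{a}^{-1}d_{a}$) is valid and corresponds to the paper's Lemma~11.1.
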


\noindent
{\bf Acknowledgements. }The germ of the work described in this paper arose from a discussion on higher rank graphs amongst Nadia Larsen, Mark V. Lawson, Aidan Sims and Alina Vdovina at the end of the
2017 ICMS Workshop {\em Operator algebras: order, disorder and symmetry}.
This led to ongoing discussions between the two authors centred on the papers \cite{RS,KP,RSY,FMY,Sp1,Sp2}. 
It quickly became clear that there was a need to find a common language and the present paper was the result.
Crucial to our thinking, was the work of Ruy Exel \cite{Exel} and Daniel Lenz \cite{Lenz};
our main theorem (Theorem~\ref{them:four}) is analogous to a result of Benjamin Steinberg \cite[Corollary 5.3]{Steinberg2016}
but we work, of course, with Boolean inverse semigroups.
Our use of covers and cover-to-join maps goes back to \cite{LL} although they also play a role in \cite{Exel};
see also \cite{DM}, a paper tightly linked to this one;
in particular, the authors would like to thank Allan Donsig for answering some of their questions.
In the first version of this paper, the authors proved Theorem~\ref{them:four} under the assumption that the inverse semigroup was a 
`weak semilattice' in the sense of Steinberg \cite{Steinberg, Steinberg2016}.
The authors would like to thank Enrique Pardo for pointing out that this was unnecessary and supplying a result,
suggested by Lisa Orloff Clark, that enabled us to prove the more general version of the theorem.
Finally, the authors would like to thank Ruy Exel for a very constructive email exchange.
As a result, we would like to point out that the tight maps defined in \cite{Exel} are of a more general nature than the maps we use.
This is because Exel does not want to spell out whether he is working in a unital or non-unital environment;
it is a feature of our categorical setting, that we have to be explicit.

\section{Inverse semigroups and groupoids}

We assume the reader is familiar with basic inverse semigroup theory \cite{Law1} and that of \'etale groupoids \cite{Resende}.\\
If $s$ is an element of an inverse semigroup we write $\mathbf{d}(s) = s^{-1}s$ and $\mathbf{r}(s) = ss^{-1}$.
We write $e \stackrel{a}{\rightarrow} f$ to mean that $\mathbf{d}(a) = e$ and $\mathbf{r}(a) = f$.
Green's relation $\mathscr{D}$ assumes the following form in inverse semigroups:
$a\, \mathscr{D} \, b$ if and only if there is an element $x$ such that $\mathbf{d}(a) \stackrel{x}{\rightarrow} \mathbf{d}(b)$.
The order on inverse semigroups will be the usual {\em natural partial order}.
The semilattice of idempotents of an inverse semigroup $S$ is denoted by $\mathsf{E}(S)$.
More generally, if $X$ is a subset of $S$ then $\mathsf{E}(X) = \mathsf{E}(S) \cap X$.
In addition, define
$$X^{\uparrow} = \{s \in S \colon \exists x \in X, x \leq s \}
\text{ and }
X^{\downarrow} = \{s \in S \colon \exists x \in X, s \leq x \}.$$
If $X = \{x\}$ then we write simply $x^{\uparrow}$ and $x^{\downarrow}$, respectively.
The {\em compatibility relation} $\sim$ in an inverse semigroup  
is defined by $s \sim t$ if and only if $s^{-1}t$ and $st^{-1}$ are idempotents.
The significance of the compatibility relation is that being compatible is a necessary condition for two elements to have a join.
A set that consists of elements which are pairwise compatible is said to be {\em compatible}.
The {\em orthogonality relation} $\perp$ in an inverse semigroup 
is defined by $s \perp t$ if and only if $s^{-1}t = 0 = st^{-1}$.
A set that consists of elements which are pairwise orthogonal is said to be {\em orthogonal}.

If $G$ is a groupoid we regard it as a set of arrows.
Amongst those arrows are the identities and the set of such identities is denoted by $G_{o}$.
If $g \in G$ we write $\mathbf{d}(g) = g^{-1}g$ and $\mathbf{r}(g) = gg^{-1}$.
We write $e \stackrel{g}{\rightarrow} f$ if $\mathbf{d}(g) = e$ and $\mathbf{r}(g) = f$.
Define the equivalence relation $\mathcal{D}$ on $G$ by $g \, \mathcal{D} \, h$ if and only if
there exists $x \in G$ such that $\mathbf{d}(g) \stackrel{x}{\rightarrow} \mathbf{d}(h)$.
A subset of $G$ is said to be an {\em invariant} subset if it is a union of $\mathcal{D}$-classes.
A subset of $G_{o}$ is said to be an {\em invariant} subset if it is a union of $\mathcal{D}$-classes restricted to $G_{o}$.
Observe that a subset $X$ of $G_{o}$ is invariant precisely when it satisfies the following condition: $g^{-1}g \in X \Leftrightarrow gg^{-1} \in X$.
Let $G$ be a groupoid and let $X \subseteq G_{o}$ be any subset of the space of identities.
The {\em reduction of $G$ to $X$}, denoted by $G|_{X}$, is the groupoid whose elements are all those $g \in G$ such that
$\mathbf{d}(g), \mathbf{r}(g) \in X$.
A functor $\alpha \colon G \rightarrow H$ is said to be a {\em covering} functor if for each identity $e \in G$ 
the induced function from the set $\{g \in G \colon \mathbf{d}(g) = e\}$ to the set  $\{h \in H \colon \mathbf{d}(h) = \alpha (e)\}$
is a bijection.
Let $G$ be any groupoid.
A subset $X \subseteq G$ is said to be a {\em partial bisection} if $x,y \in X$ and $\mathbf{d}(x) = \mathbf{d}(y)$ then $x = y$,
and if $x,y \in X$ and $\mathbf{r}(x) = \mathbf{r}(y)$ then $x = y$.
This is equivalent to requiring that $X^{-1}X,XX^{-1} \subseteq G_{o}$.
In this paper, we are interested in topological groupoids,
that is groupoids which carry a topology with respect to which multiplication and inversion are continuous,
but more specifically those topological groupoids which are also {\em \'etale}, meaning that the domain
and range maps are local homeomorphisms.

\section{Commutative Stone duality}

Classical Stone duality \cite{Stone1936, Stone1937a, Stone1937b} is described in the book \cite{J} where it is unfortunately limited to the unital case.
We therefore sketch out the essentials we shall need of the non-unital theory here.
Distributive lattices will always have a bottom but not necessarily a top.
A {\em generalized Boolean algebra} is then a distributive lattice with bottom element in which each principal order ideal is a unital Boolean algebra.
In a distributive lattice, every ultrafilter is a prime filter \cite[Theorem 3]{Stone1937b}
and a distributive lattice is a generalized Boolean algebra if and only if every prime filter is an ultrafilter \cite[Proposition 1.6]{LL}.

Let $X$ be a Hausdorff space.
Then $X$ is {\em locally compact} if each point of $X$ is contained in the interior of a compact subset \cite[Theorem 18.2]{Willard}.
Recall that a topological space is {\em $0$-dimensional} if it has a basis of clopen subsets.
The proof of the following is by standard results in topology \cite{Simmons}.
It is included solely to provdie context.

\begin{lemma}\label{lem:boolean-space} Let $X$ be a Hausdorff space.
Then the following are equivalent.
\begin{enumerate}
\item $X$ is locally compact and $0$-dimensional.
\item $X$ has a basis of compact-open sets.
\end{enumerate}
\end{lemma}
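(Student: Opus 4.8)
The plan is to prove the two implications separately, using only elementary point-set topology.

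\medskip

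\noindent\textbf{(1) $\Rightarrow$ (2).}
Assume $X$ is Hausdorff, locally compact, and $0$-dimensional.
I would show that the clopen sets with compact closure --- equivalently, the compact-open sets --- form a basis.
Fix a point $x \in X$ and an open neighbourhood $U$ of $x$.
By local compactness, $x$ lies in the interior of some compact set $K$; in a Hausdorff space $K$ is closed, so replacing $U$ by $U \cap \mathrm{int}(K)$ we may assume $\overline{U}$ is compact.
By $0$-dimensionality, there is a clopen set $V$ with $x \in V \subseteq U$.
Then $V \subseteq \overline{U}$, and $V$ is closed, hence $V$ is a closed subset of the compact set $\overline{U}$, so $V$ is compact.
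Thus $V$ is a compact-open neighbourhood of $x$ inside $U$.
Since $x$ and $U$ were arbitrary, the compact-open sets form a basis.

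\medskip

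\noindent\textbf{(2) $\Rightarrow$ (1).}
Assume $X$ is Hausdorff with a basis $\mathcal{B}$ of compact-open sets.
Local compactness is immediate: every point $x$ lies in some $B \in \mathcal{B}$, which is compact and equals its own interior, so $x$ is in the interior of a compact set.
For $0$-dimensionality it suffices to show every member of $\mathcal{B}$ is clopen, since then $\mathcal{B}$ is a basis of clopen sets.
Each $B \in \mathcal{B}$ is open by hypothesis; it is also compact, and a compact subset of a Hausdorff space is closed.
Hence $B$ is clopen, and $X$ is $0$-dimensional.

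\medskip

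\noindent The argument is entirely routine; the only point requiring any care --- and the place I would single out as the "main obstacle", such as it is --- is the use of the Hausdorff hypothesis to pass between compactness and closedness (compact $\Rightarrow$ closed in the Hausdorff setting, and closed subsets of compact sets are compact). Everything else is a direct unwinding of the definitions of "basis", "locally compact", and "$0$-dimensional".
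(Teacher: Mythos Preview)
Your proof is correct and is exactly the kind of routine verification one would expect. The paper does not actually supply a proof of this lemma; it simply states that the result follows from standard topology and cites a textbook, so your argument is a faithful spelling-out of what the paper leaves implicit.
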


We define a {\em locally compact Boolean space} to be a $0$-dimensional, locally compact Hausdorff space
and a {\em compact Boolean space} to be a $0$-dimensional, compact Hausdorff space.
Let $B_{1}$ and $B_{2}$ be Boolean algebras.
A morphism $\alpha \colon B_{1} \rightarrow B_{2}$ of such algebras is said to be {\em proper}
if $B_{2} = \mbox{im}(\alpha)^{\downarrow}$.
Let $X_{1}$ and $X_{2}$ be locally compact Boolean spaces.
A continuous map $\beta \colon X_{2} \rightarrow X_{1}$ is said to be {\em proper}
if the inverse image under $\beta$ of each compact set is compact.

\begin{theorem}[Commutative Stone duality]\label{them:csd2} 
The category of Boolean algebras (respectively, unital Boolean algebras) and their proper morphisms (respectively, morphisms)
is dually equivalent to the category of locally compact Boolean spaces (respectively, compact Boolean spaces) and their proper morphisms (respectively, continuous maps).
\end{theorem}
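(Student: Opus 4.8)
The plan is to exhibit the two contravariant functors implementing the duality together with the natural isomorphisms witnessing it, treating the unital and non-unital cases in parallel. Throughout, the only genuinely non-formal input is the Boolean prime ideal theorem (equivalently, the ultrafilter lemma), which supplies ``enough points''.

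First I would define the \emph{Stone space} functor $\mathsf{X}$. Given a Boolean algebra $B$, let $\mathsf{X}(B)$ be its set of ultrafilters, topologized by the basis $U_{b} = \{F \in \mathsf{X}(B) : b \in F\}$, $b \in B$. Using distributivity of $B$ and the fact that in a generalized Boolean algebra every prime filter is an ultrafilter (\cite[Proposition 1.6]{LL}), one checks $U_{b} \cap U_{c} = U_{b \wedge c}$, $U_{b} \cup U_{c} = U_{b \vee c}$, and, inside a principal ideal $b^{\downarrow}$ (a unital Boolean algebra), $U_{b} \setminus U_{c} = U_{b \setminus (b \wedge c)}$; so the $U_{b}$ form a basis of clopen sets closed under the Boolean operations. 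The key point is that each $U_{b}$ is compact, being the Stone space of the unital Boolean algebra $b^{\downarrow}$, by the usual compactness argument via the prime ideal theorem. Hence $\mathsf{X}(B)$ is Hausdorff (ultrafilters separate elements of $B$) with a basis of compact-open sets, so by Lemma~\ref{lem:boolean-space} it is a locally compact Boolean space, compact exactly when $B$ has a top. A proper morphism $\alpha \colon B_{1} \to B_{2}$ is sent to the map $F \mapsto \alpha^{-1}(F)^{\uparrow}$ on ultrafilters; properness, i.e.\ $B_{2} = \mathrm{im}(\alpha)^{\downarrow}$, is precisely what forces this to be a genuine ultrafilter of $B_{1}$. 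Continuity and properness of $\mathsf{X}(\alpha)$ follow from the identity $\mathsf{X}(\alpha)^{-1}(U_{b}) = U_{\alpha(b)}$ together with compactness of the basic sets.

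Second I would define the \emph{compact-open algebra} functor $\mathsf{KO}$: for a locally compact Boolean space $X$, let $\mathsf{KO}(X)$ be its set of compact-open subsets, which by Lemma~\ref{lem:boolean-space} is closed under finite unions, finite intersections and relative complements, hence a Boolean algebra (unital iff $X$ is compact); for a proper continuous $\beta \colon X_{2} \to X_{1}$, set $\mathsf{KO}(\beta) = \beta^{-1}$, well defined since preimages of compact-open sets are open and, by properness, compact, and a proper morphism since any compact-open $K \subseteq X_{2}$ is covered by finitely many $\beta^{-1}(L_{i})$ (using local compactness of $X_{1}$ and compactness of $\beta(K)$), whence $K \leq \beta^{-1}(L_{1} \vee \cdots \vee L_{n})$. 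Then I would produce the unit and counit: $\eta_{B} \colon B \to \mathsf{KO}(\mathsf{X}(B))$, $b \mapsto U_{b}$, is a Boolean morphism by the computations above, injective because ultrafilters separate elements, surjective because a compact-open subset of $\mathsf{X}(B)$ is a finite union of basic sets $U_{b_{i}} = U_{b_{1} \vee \cdots \vee b_{n}}$; and $\varepsilon_{X} \colon X \to \mathsf{X}(\mathsf{KO}(X))$, $x \mapsto \{K : x \in K\}$, which lands in ultrafilters by $0$-dimensionality and Hausdorffness, is injective by the same, surjective because an ultrafilter with empty intersection would give a family of compact sets with the finite intersection property but empty intersection, and a homeomorphism since $\varepsilon_{X}^{-1}(U_{K}) = K$ and $\varepsilon_{X}(K) = U_{K}$. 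Naturality in both variables is a diagram chase using the preimage identities. Restricting to algebras with a top and dropping the properness conditions on both sides yields the unital/compact version.

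The main obstacle is really just the point-set content behind ``enough points'': compactness of each $U_{b}$ and separation of elements of $B$ by ultrafilters (both the prime ideal theorem), and the emptiness-of-intersection argument for surjectivity of $\varepsilon_{X}$. The part that needs the most care, as opposed to being routine, is the bidirectional matching of the two notions of \emph{proper} --- that a proper algebra morphism induces a proper map of spaces and conversely --- since this is exactly where the non-unital theory diverges from the textbook unital statement in \cite{J}.
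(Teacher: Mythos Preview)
The paper does not actually prove Theorem~\ref{them:csd2}: it is stated as the classical result of Stone, with references to \cite{Stone1936, Stone1937a, Stone1937b} and the remark that the unital case is treated in \cite{J}. So there is nothing in the paper to compare your argument against.

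That said, your sketch is the standard construction and is correct in outline. One very small point: the taking of the upward closure in $F \mapsto \alpha^{-1}(F)^{\uparrow}$ is redundant, since the preimage of an upward-closed set under an order-preserving map is already upward closed; what properness really buys you is \emph{non-emptiness} of $\alpha^{-1}(F)$ (pick any $c \in F$, use $c \leq \alpha(b)$ for some $b$, whence $\alpha(b) \in F$), and then primeness (hence ultra, in a generalized Boolean algebra) is automatic from the preimage. Your identification of the two uses of the word ``proper'' as the genuinely new content in the non-unital case is exactly right and is precisely why the paper singles out proper morphisms here and later in the non-commutative version.
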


\section{Non-commutative Stone duality}

We refer the reader to the papers \cite{Law3, Law5, LL} for all the details omitted in this section.
An inverse semigroup is said to be {\em distributive} if it has binary joins of compatible elements and multiplication distributes over such joins.
A distributive inverse semigroup is {\em Boolean} if its semilattice of idempotents is a Boolean algebra.
If $X \subseteq S$ is a subset of a distributive inverse semigroup, denote by $X^{\vee}$ the set of all joins
of finite, non-empty compatible subsets of $S$.
Clearly, $X \subseteq X^{\vee}$. 
A {\em morphism} between distributive inverse semigroups is a homomorphism of inverse semigroups
that maps binary compatible joins to binary compatible joins.

Let $S$ be an inverse semigroup.
A {\em filter} in $S$ is a subset $A$ such that $A = A^{\uparrow}$ and whenever $a,b \in A$ there exists $c \in A$ such that $c \leq a,b$.
A filter is {\em proper} if it does not contain zero.\\

\noindent
{\bf Terminology. }Proper filters are always assume to be non-empty.\\

Observe that $A$ is a filter if and only if $A^{-1}$ is a filter.
If $A$ and $B$ are filters then $(AB)^{\uparrow}$ is a filter.
Define $\mathbf{d}(A) = (A^{-1}A)^{\uparrow}$ and $\mathbf{r}(A) = (AA^{-1})^{\uparrow}$.
Then both $\mathbf{d}(A)$ and $\mathbf{r}(A)$ are filters.
It is easy to check that $A$ is proper if and only if $\mathbf{d}(A)$ is proper (respectively, $\mathbf{r}(A)$ is proper).
Observe that for each $a \in A$ we have that $A = (a \mathbf{d}(A))^{\uparrow} = (\mathbf{r}(A)a)^{\uparrow}$.
We denote the set of {\em proper} filters on $S$ by $\mathcal{L}(S)$.
If $A,B \in \mathcal{L}(S)$, then $A \cdot B$ is defined if and only if $\mathbf{d}(A) = \mathbf{r}(B)$
in which case $A \cdot B = (AB)^{\uparrow}$.
In this way, $\mathcal{L}(S)$ becomes a groupoid;
the identities of this groupoid are the filters that contain idempotents --- these are precisely the filters that are also inverse subsemigroups. 

\begin{remark}{\em Let $E$ be a meet semilattice with zero.
Then proper filters (recall that they are always required to be non-empty) on $E$ correspond exactly to the characters of Exel \cite[page 3, page 40, page 53]{Exel}.
However, proper filters can be extended to arbitrary inverse semigroups and form the basis of the approach to non-commutative
Stone duality developed in this paper.
This approach goes back to the paper of Lenz \cite{Lenz} as developed in \cite{LMS}.
In addition, the term `character' has other meanings in algebra and so is one that has to be used with caution.}
\end{remark}

Let $S$ be a distributive inverse semigroup.
A {\em prime filter} in $S$ is a proper filter $A \subseteq S$ such that if $a \vee b \in A$
then $a \in A$ or $b \in A$.
An {\em ultrafilter} is a maximal proper filter.
Denote the set of all prime filters of $S$ by $\mathsf{G}(S)$.
It can be checked that $A$ is a prime filter if and only if  $\mathbf{d}(A)$ (respectively, $\mathbf{r}(A)$) is a prime filter.
Define a partial multiplication $\cdot$ on $\mathsf{G}(S)$ by $A \cdot B$ exists if and only if $\mathbf{d}(A) = \mathbf{r}(B)$,
in which case $A \cdot B = (AB)^{\uparrow}$.
With respect to this partial multiplication, $\mathsf{G}(S)$ is a groupoid;
the identities are the prime filters that contain idempotents.
For this reason, it is convenient to define a prime filter to be an {\em identity} if it contains an idempotent.
Proofs of all of the above claims can be found in \cite{LL}.
In a distributive inverse semigroup all ultrafilters are prime filters 
whereas Boolean inverse semigroups are characterized by the fact that all prime filters are ultrafilters \cite[Lemma~3.20]{LL}.

An \'etale groupoid $G$ is called a {\em Boolean groupoid} if its space of identities is a locally compact Boolean space.
Let $S$ be a Boolean inverse semigroup.
Denote by $\mathsf{G}(S)$ the set of all ultrafilters of $S$.
Then $\mathsf{G}(S)$ is a Boolean groupoid, called the {\em Stone groupoid} of $S$, 
where a basis for the topology is given by the subsets $V_{a}$, the set of all ultrafilters in $S$ that contain the element $a \in S$.
Let $G$ be a Boolean groupoid.
Denote by $\mathsf{KB}(G)$ the set of all 
compact-open partial bisections of $G$.
Then $\mathsf{KB}(G)$ is a Boolean inverse semigroup under subset multiplication.
A morphism $\theta \colon S \rightarrow T$ between Boolean inverse semigroups is said to be {\em callitic} if it satisfies two properties:
\begin{enumerate}
\item It is {\em weakly meet preserving} meaning that for any $a,b  \in S$ and any $t \in T$  if $t \leq \theta (a), \theta (b)$ then there exists $c \leq a,b$ such that $t \leq \theta (c)$.
\item It is {\em proper} meaning that $\mbox{im}(\theta)^{\vee} = T$. Observe that surjective maps are automatically proper.
\end{enumerate}
A continuous functor $\alpha \colon G \rightarrow H$ between \'etale groupoids is said to be {\em coherent} if
the inverse images of compact-open sets are compact-open.
The following is the non-commutative generalization of Theorem~\ref{them:csd2}.

\begin{theorem}[Non-commutative Stone duality]\label{them:ncsd} \mbox{}
\begin{enumerate}

\item For each Boolean inverse semigroup $S$, the group\-oid $\mathsf{G}(S)$ is Boolean and is such that $S \cong \mathsf{KB}(\mathsf{G}(S))$.

\item For each Boolean groupoid $G$, the semigroup $\mathsf{KB}(G)$ is a Boolean inverse semigroup and is such that $G \cong \mathsf{G}(\mathsf{KB}(G))$.

\item There is a dual equivalence between callitic morphisms and coherent continuous covering functors.

\end{enumerate}
\end{theorem}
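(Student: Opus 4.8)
The plan is to bootstrap the entire result from commutative Stone duality (Theorem~\ref{them:csd2}) applied to the semilattice of idempotents, and then to lift that idempotent-level duality across the groupoid by means of the representation $a \mapsto V_{a}$. Throughout I would exploit the fact, recorded above, that in a Boolean inverse semigroup the prime filters coincide with the ultrafilters; this is the single feature that makes the non-commutative theory behave like the commutative one. For the object correspondence in part (1), since $\mathsf{E}(S)$ is a generalized Boolean algebra its ultrafilters are exactly the identities $\mathsf{G}(S)_{o}$ of the groupoid, and Theorem~\ref{them:csd2} already tells us that $\mathsf{G}(S)_{o}$ is a locally compact Boolean space with $\mathsf{E}(S)$ isomorphic to its algebra of compact-open sets via $e \mapsto V_{e}$. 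To promote this to all of $\mathsf{G}(S)$ I would verify that the sets $V_{a}$ form a basis of compact-open partial bisections: that $V_{a}$ is a partial bisection follows from the identity $A = (a\,\mathbf{d}(A))^{\uparrow}$ (two ultrafilters in $V_{a}$ with equal domain coincide); that $\mathbf{d}$ restricts to a homeomorphism $V_{a} \to V_{\mathbf{d}(a)} \cap \mathsf{G}(S)_{o}$ supplies both the étale property and, with the commutative case, the compactness of $V_{a}$; and the meet formula $V_{a} \cap V_{b} = V_{a \wedge b}$ for compatible $a,b$ (empty otherwise) shows the basis is closed under intersection. Hence $\mathsf{G}(S)$ is a Boolean groupoid.

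For the isomorphism $S \cong \mathsf{KB}(\mathsf{G}(S))$ I would study the map $\beta \colon a \mapsto V_{a}$. One checks $V_{ab} = V_{a}V_{b}$ and $V_{a^{-1}} = V_{a}^{-1}$, so $\beta$ is a homomorphism, and the crucial formula $V_{a \vee b} = V_{a} \cup V_{b}$ for compatible $a,b$---whose nontrivial inclusion is exactly where primeness of ultrafilters is used---shows $\beta$ preserves compatible joins. Injectivity reduces, through the natural order, to separating idempotents, which is commutative Stone duality again. For surjectivity onto $\mathsf{KB}(\mathsf{G}(S))$, an arbitrary compact-open partial bisection is a finite union of basic sets $V_{a_{1}} \cup \cdots \cup V_{a_{n}}$; the partial-bisection condition forces the $a_{i}$ to be pairwise compatible, whence $V_{a_{1}} \cup \cdots \cup V_{a_{n}} = V_{a_{1} \vee \cdots \vee a_{n}}$ lies in the image of $\beta$.

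Part (2) I would treat as the mirror image. Continuity of multiplication and inversion together with the étale hypothesis show that products and inverses of compact-open partial bisections of $G$ are again such; the idempotents of $\mathsf{KB}(G)$ are the compact-open subsets of $G_{o}$, which form a generalized Boolean algebra by the commutative theory, so $\mathsf{KB}(G)$ is indeed a Boolean inverse semigroup. For $G \cong \mathsf{G}(\mathsf{KB}(G))$ I would send $g \in G$ to the set of compact-open partial bisections containing it, check that this is an ultrafilter by using local compactness and $0$-dimensionality to produce separating compact-open neighbourhoods, and then verify that the assignment is a bijective, continuous, open functor.

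Finally, for the morphism correspondence in part (3), a callitic morphism $\theta \colon S \to T$ would be sent to the functor $\mathsf{G}(\theta) \colon \mathsf{G}(T) \to \mathsf{G}(S)$ obtained by pulling ultrafilters back along $\theta$, and conversely a coherent continuous covering functor would be transported to a morphism of the associated compact-open bisection semigroups. The heart of the matter, and the step I expect to be the main obstacle, is showing that the two defining clauses of \emph{callitic} match exactly the two structural properties of the functor: that \emph{weakly meet preserving} corresponds to the \emph{covering} condition (bijectivity on the stars $\{g \colon \mathbf{d}(g)=e\}$) and that \emph{proper} corresponds to \emph{coherence} (compact-open sets pull back to compact-open sets), while simultaneously confirming that $\mathsf{G}$ and $\mathsf{KB}$ are mutually inverse on morphisms. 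The object-level isomorphisms of parts (1) and (2) would then upgrade to a natural dual equivalence of categories.
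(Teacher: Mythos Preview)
The paper does not supply a proof of this theorem: it is stated as background, with the sentence ``We refer the reader to the papers \cite{Law3, Law5, LL} for all the details omitted in this section'' standing in lieu of an argument. There is therefore no in-paper proof to compare against. Your sketch is broadly in line with how the result is established in those cited references---reducing the identity space to classical Stone duality for $\mathsf{E}(S)$, using $A = (a\,\mathbf{d}(A))^{\uparrow}$ to see that each $V_{a}$ is a partial bisection, transporting compactness through the local homeomorphism $\mathbf{d}\colon V_{a}\to V_{\mathbf{d}(a)}$, and invoking primeness of ultrafilters for $V_{a\vee b}=V_{a}\cup V_{b}$---so in spirit your approach matches the one the paper defers to.

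One small correction: your claim that $V_{a}\cap V_{b}$ is empty when $a\not\sim b$ is false. Two incompatible elements can share a nonzero lower bound and hence a common ultrafilter; the correct statement (compare Lemma~\ref{lem:hill}(6) for proper filters) is $V_{a}\cap V_{b}=\bigcup_{x\leq a,b}V_{x}$, which is still an open set and suffices for the basis argument. This does not damage your overall strategy, but the parenthetical ``(empty otherwise)'' should be dropped.
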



\section{Additive ideals}

This section contains those results about Boolean inverse semigroups that are `ring-like'.
Specifically, Proposition~\ref{prop:earth} will be the key to proving Theorem~\ref{them:four}.
It will require a refinement of some of the results proved in \cite{Wehrung}.\\

\noindent
{\bf Terminology.} In the theory of Boolean inverse semigroups, there are two notions of `kernel'.
The first, which we shall write as {\em Kernel}, is the congruence induced by a morphism on its domain.
The second, which we shall write as {\em kernel}, is the set of all elements of the domain sent to zero.
The congruences induced on the domains of morphisms are called {\em additive congruences}.
The use of the word `additive' arises from regarding the partially defined binary operation of compatible join as an
analogue of addition in rings. Wehrung provides an abstract characterization of additive congruences in \cite[Proposition~3.4.1]{Wehrung}
but we shall only need the informal idea here.\\

The fundamental problem in working with Boolean inverse semigroups is that joins are only defined for compatible subsets.
Wehrung \cite[Section~3.2]{Wehrung} devised an ingenious solution to deal with this issue that enabled him to show that, despite appearances,
Boolean inverse semigroups form a variety of algebras.
Let $a,b \in S$, a Boolean inverse semigroup.
Put 
$e = \mathbf{d}(a) \setminus \mathbf{d}(a)\mathbf{d}(b)$
and
$f = \mathbf{r}(b) \setminus \mathbf{r}(a) \mathbf{r}(b)$.
Define 
$$a \ominus b = fae.$$
This is called the {\em (left) skew difference}.
The element $a \ominus b$ is the largest element of $a^{\downarrow}$ orthogonal to $b$.
Define 
$$a \, \triangledown \, b = (a \ominus b) \vee b.$$
This is called the {\em (left) skew join of $a$ and $b$}.
The important point about the left skew join is that it is always defined
and, as we show next, extends the partially defined operation of binary compatible join.

\begin{lemma}\label{lem:fanta} Let $S$ be a Boolean inverse semigroup.
If $s \sim t$ then $s \, \triangledown \, t = s \vee t$.
\end{lemma}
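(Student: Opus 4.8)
The plan is to unwind the definitions of $\ominus$ and $\triangledown$ under the hypothesis $s\sim t$ and show that the correction term $s\ominus t$ is nothing but the part of $s$ that the join already accounts for: when $s$ and $t$ are compatible, $s\ominus t$ should be precisely the element $s\cdot(\mathbf{d}(s)\setminus\mathbf{d}(s)\mathbf{d}(t))$ sitting below $s$ and orthogonal to $t$, and then $(s\ominus t)\vee t$ should reconstitute $s\vee t$. First I would recall that $s\sim t$ means $s^{-1}t$ and $st^{-1}$ are idempotents; in particular $s$ and $t$ have a join $s\vee t$ in $S$, which is the supremum in the natural partial order, so the right-hand side of the claimed identity makes sense and we only need to compare two genuine elements of $S$.

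Next I would compute $s\ominus t = f\,s\,e$ where $e=\mathbf{d}(s)\setminus\mathbf{d}(s)\mathbf{d}(t)$ and $f=\mathbf{r}(t)\setminus\mathbf{r}(s)\mathbf{r}(t)$. The key computational observation, using compatibility, is that left-multiplying $s$ by $f$ has no effect: because $s\sim t$ one checks that $\mathbf{r}(s)\leq\mathbf{r}(t)$ fails in general, but the relevant fact is that $f\,s = s\,e$ up to the relation $\mathbf{r}(s)(\mathbf{r}(s)\setminus\mathbf{r}(s)\mathbf{r}(t)) = \mathbf{r}(s)\setminus\mathbf{r}(s)\mathbf{r}(t)$, and translating the range-side idempotent across $s$ to the domain side turns it into exactly $\mathbf{d}(s)\setminus\mathbf{d}(s)\mathbf{d}(t)$, i.e.\ $e$; here I would use that $s^{-1}(\mathbf{r}(s)g)s = \mathbf{d}(s)\,s^{-1}gs$ and that for an idempotent $g\leq\mathbf{r}(t)$ conjugation by the compatible pair behaves well. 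Hence $s\ominus t = se$, the restriction of $s$ to the idempotent $e$. Then $se$ and $t$ are orthogonal (by construction of $e$, or directly: $(se)^{-1}t = e s^{-1}t = e\,\mathbf{d}(s)s^{-1}t$ and $e$ kills the idempotent $s^{-1}t\cdot(s^{-1}t)^{-1}=\mathbf{d}(s)\mathbf{d}(t)$), so the join $(se)\vee t$ exists.

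Finally I would show $(se)\vee t = s\vee t$. One inequality is clear: $se\leq s$ and $t\leq t$, so $(se)\vee t\leq s\vee t$. For the reverse, it suffices to show $s\leq (se)\vee t$; using that $\mathbf{d}(s) = e\vee \mathbf{d}(s)\mathbf{d}(t)$ (a join of orthogonal idempotents in the Boolean algebra $\mathsf{E}(S)$), we get $s = s\mathbf{d}(s) = se\vee s\mathbf{d}(s)\mathbf{d}(t)$, and $s\mathbf{d}(s)\mathbf{d}(t) = s\,(s^{-1}t)(s^{-1}t)^{-1}\cdots$; more directly, $s\mathbf{d}(s)\mathbf{d}(t)\leq s$ and also $s\mathbf{d}(s)\mathbf{d}(t)\leq t$ because $s\sim t$ forces $s$ and $t$ to agree on the common part of their domains (this is the standard fact that compatible elements have a meet $s\wedge t$ with $\mathbf{d}(s\wedge t)=\mathbf{d}(s)\mathbf{d}(t)$, cited in the excerpt via \cite[Lemma~1.4.11]{Law1}). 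Thus $s\mathbf{d}(s)\mathbf{d}(t)\leq t\leq (se)\vee t$ and $se\leq (se)\vee t$, whence $s\leq (se)\vee t$, and symmetrically $t\leq(se)\vee t$, giving $s\vee t\leq (se)\vee t$.

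The main obstacle I expect is the bookkeeping in the second paragraph: correctly pushing the range-side complement idempotent $f$ through $s$ to identify $fse$ with $se$, which requires care with the natural partial order on idempotents and the Boolean complement operation, rather than any conceptual difficulty. Everything else is routine inverse-semigroup manipulation together with the defining property of joins in $S$.
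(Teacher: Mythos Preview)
Your approach is essentially the same as the paper's: both identify $s\ominus t$ with $se=s\setminus(s\wedge t)$ using the existence and domain/range formulas for the meet of compatible elements (\cite[Lemma~1.4.11]{Law1}), and then conclude $(s\setminus(s\wedge t))\vee t=s\vee t$. The paper simply states $s\ominus t=s\setminus(s\wedge t)$ in one line rather than pushing $f$ through $s$ by hand; your second paragraph could be streamlined the same way, since once you know $\mathbf{d}(s\wedge t)=\mathbf{d}(s)\mathbf{d}(t)$ and $\mathbf{r}(s\wedge t)=\mathbf{r}(s)\mathbf{r}(t)$ the identity $fse=se$ is immediate from $\mathbf{r}(se)=s\,e\,s^{-1}=\mathbf{r}(s)\setminus\mathbf{r}(s\wedge t)$.
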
 
\begin{proof} If $s \sim t$ then $s \wedge t$ exists 
and $\mathbf{d}(s \wedge t) = \mathbf{d}(s) \wedge \mathbf{d}(t)$ 
and
$\mathbf{r}(s \wedge t) = \mathbf{r}(s) \wedge \mathbf{r}(t)$ by \cite[Lemma~1.4.11]{Law1}.
It follows that $s \ominus t = s \setminus (s \wedge t)$.
Thus  $s \, \triangledown \, t = s \vee t$, as claimed.
\end{proof}

Skew join is an algebraic operation and is preserved by all morphisms between Boolean inverse semigroups.
The following result is simple, but useful.

\begin{lemma}\label{lem:irn} Let $\theta \colon S \rightarrow T$ be a morphism of Boolean inverse semigroups.
If $\theta (a) \sim \theta (b)$ then $\theta (a) \vee \theta (b) = \theta (a \, \triangledown \, b)$.
\end{lemma}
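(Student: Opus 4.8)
The plan is to deduce this from the algebraic nature of the skew join together with the definition of the skew difference and the fact that $\theta$ is a morphism of Boolean inverse semigroups, hence preserves binary compatible joins, the natural partial order, multiplication, $\mathbf{d}$, $\mathbf{r}$, and relative complements in the Boolean algebra of idempotents. The essential observation is that $a \,\triangledown\, b = (a \ominus b) \vee b$ is built entirely out of these operations, so $\theta(a \,\triangledown\, b) = \theta(a \ominus b) \,\triangledown\, \theta(b)$; once we know this, Lemma~\ref{lem:fanta} applied inside $T$ finishes the argument, provided $\theta(a \ominus b) \perp \theta(b)$, which in particular gives $\theta(a \ominus b) \sim \theta(b)$.

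First I would verify that $\theta$ preserves the skew difference, i.e.\ $\theta(a \ominus b) = \theta(a) \ominus \theta(b)$. Recall $a \ominus b = fae$ where $e = \mathbf{d}(a) \setminus \mathbf{d}(a)\mathbf{d}(b)$ and $f = \mathbf{r}(b) \setminus \mathbf{r}(a)\mathbf{r}(b)$. Since $\theta$ is a homomorphism it preserves products and the idempotent operations $\mathbf{d},\mathbf{r}$; since $\theta$ restricts to a morphism of generalized Boolean algebras on idempotents, it preserves the relative complements defining $e$ and $f$. Hence $\theta(e)$ and $\theta(f)$ are exactly the idempotents that occur in the definition of $\theta(a) \ominus \theta(b)$, and $\theta(a \ominus b) = \theta(f)\theta(a)\theta(e) = \theta(a) \ominus \theta(b)$. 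Next I would recall (from the line in the excerpt stating that $a \ominus b$ is the largest element of $a^{\downarrow}$ orthogonal to $b$) that $(a \ominus b) \perp b$; applying the fact that morphisms preserve orthogonality — which follows since $s \perp t$ is the conjunction of two equations $s^{-1}t = 0 = st^{-1}$ and $\theta$ preserves products and zero — we get $\theta(a \ominus b) \perp \theta(b)$, so in particular $\theta(a \ominus b) \sim \theta(b)$.

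Then I would combine these: using Lemma~\ref{lem:fanta} inside $T$ with $s = \theta(a \ominus b)$ and $t = \theta(b)$ (which are orthogonal, hence compatible), we have $\theta(a \ominus b) \vee \theta(b) = \theta(a \ominus b) \,\triangledown\, \theta(b)$. Since $\theta$ preserves binary compatible joins, the left side equals $\theta\bigl((a \ominus b) \vee b\bigr) = \theta(a \,\triangledown\, b)$. Meanwhile, by the skew-difference computation above, $\theta(a \ominus b) \,\triangledown\, \theta(b) = (\theta(a) \ominus \theta(b)) \,\triangledown\, \theta(b) = \theta(a) \,\triangledown\, \theta(b)$. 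Finally, because we are given $\theta(a) \sim \theta(b)$, Lemma~\ref{lem:fanta} (again inside $T$) yields $\theta(a) \,\triangledown\, \theta(b) = \theta(a) \vee \theta(b)$. Chaining the equalities gives $\theta(a \,\triangledown\, b) = \theta(a) \vee \theta(b)$, as required.

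The main obstacle, such as it is, is bookkeeping rather than genuine difficulty: one must be careful that a morphism of Boolean inverse semigroups really does preserve the relative complements $\mathbf{d}(a)\setminus\mathbf{d}(a)\mathbf{d}(b)$ used to build $e$ and $f$. This is where the definition of morphism matters — a mere inverse-semigroup homomorphism need not preserve complements, but a morphism of Boolean inverse semigroups restricts to a morphism of generalized Boolean algebras on idempotents (sending relative complements to relative complements), and that is exactly what is needed. Alternatively, one can avoid this entirely by invoking the remark in the excerpt that skew join is an algebraic operation preserved by all morphisms between Boolean inverse semigroups, which reduces the whole statement to a one-line application of Lemma~\ref{lem:fanta} in $T$; I would likely present that short route and relegate the explicit computation to a parenthetical remark.
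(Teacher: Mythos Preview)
Your proposal is correct and follows essentially the same route as the paper: establish that $\theta(a\,\triangledown\,b)=\theta(a)\,\triangledown\,\theta(b)$ and then apply Lemma~\ref{lem:fanta} inside $T$. The paper simply invokes the sentence immediately preceding the lemma (``skew join is an algebraic operation and is preserved by all morphisms between Boolean inverse semigroups''), whereas you unpack that claim explicitly via preservation of relative complements; your closing remark about the one-line route is exactly the paper's proof.
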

\begin{proof} The element $a \, \triangledown \, b$ exists in $S$ and $\theta (a \, \triangledown \, b) = \theta (a) \, \triangledown \, \theta (b)$.
But by Lemma~\ref{lem:fanta} and the assumption that $\theta (a) \sim \theta (b)$ we get that $\theta (a \, \triangledown \, b) = \theta (a) \vee \theta (b)$.
\end{proof}

Let $S$ be a Boolean inverse semigroup.
A (semigroup) ideal $I$ of $S$ is said to be {\em additive} if it is closed under binary compatible joins.
Recall that if $X \subseteq S$ then $X^{\vee}$ denotes the set of all finite joins of non-empty compatible subsets of $X$.
The proof of the following is routine.

\begin{lemma}\label{lem:pop} Let $S$ be a Boolean inverse semigroup and let $X \subseteq S$.
Then $(SXS)^{\vee}$ is the smallest additive ideal in $S$ containing $X$.
\end{lemma}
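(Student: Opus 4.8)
The plan is to verify the four things that the statement packages together: that $T := (SXS)^{\vee}$ contains $X$, that it is a semigroup ideal of $S$, that it is closed under binary compatible joins, and that any additive ideal of $S$ containing $X$ must contain it.

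First I would record the three elementary facts about the compatibility relation on which everything rests, all of which are standard (see \cite{Law1}): in a Boolean (indeed distributive) inverse semigroup multiplication distributes over binary compatible joins; the compatibility relation is preserved under left and right multiplication, so if $\{y_{1}, \ldots, y_{n}\}$ is compatible then so are $\{sy_{1}, \ldots, sy_{n}\}$ and $\{y_{1}s, \ldots, y_{n}s\}$; and compatibility is downward closed, i.e. $a \leq a'$, $b \leq b'$, $a' \sim b'$ together imply $a \sim b$. From the third fact one obtains a ``flattening'' principle: if $y = \bigvee_{k} w_{k}$ is a compatible join with each $w_{k} = \bigvee_{l} w_{k,l}$ itself a compatible join, then the doubly-indexed family $\{w_{k,l}\}$ is compatible (each $w_{k,l} \leq w_{k}$), so $y = \bigvee_{k,l} w_{k,l}$; in particular $T^{\vee} = T$.

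Next I would note that $X \subseteq SXS$, since $x = (xx^{-1})\, x\, (x^{-1}x)$ for every $x \in X$, and that $SXS$ is visibly a semigroup ideal of $S$ because $S(SXS), (SXS)S \subseteq SXS$; hence $X \subseteq SXS \subseteq T$. To see that $T$ is a semigroup ideal, take $s \in S$ and $y = y_{1} \vee \cdots \vee y_{n} \in T$ with the $y_{i} \in SXS$ pairwise compatible; by distributivity $sy = sy_{1} \vee \cdots \vee sy_{n}$, each $sy_{i}$ lies in $SXS$ because $SXS$ is an ideal, and $\{sy_{1}, \ldots, sy_{n}\}$ is compatible by the second fact, so $sy \in T$; the argument for $ys$ is symmetric. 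For closure under compatible joins, let $y, z \in T$ with $y \sim z$, say $y = \bigvee_{i} y_{i}$ and $z = \bigvee_{j} z_{j}$ with $y_{i}, z_{j} \in SXS$; since $y_{i} \leq y$, $z_{j} \leq z$ and $y \sim z$, the combined family $\{y_{i}\} \cup \{z_{j}\}$ is compatible by the third fact, and $y \vee z$ is its join, so $y \vee z \in T$.

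Finally, for minimality, suppose $J$ is an additive ideal of $S$ with $X \subseteq J$. Then $SXS \subseteq SJS \subseteq J$, and since $J$ is closed under binary — hence, by induction, finite — compatible joins we have $J^{\vee} = J$; therefore $T = (SXS)^{\vee} \subseteq J^{\vee} = J$. I do not expect any genuine obstacle: the only step that needs care is pushing a one-sided multiplication through a compatible join while remaining inside $SXS$ and checking that the result is still a compatible join, and this is precisely what the distributivity axiom of a Boolean inverse semigroup together with the multiplicativity of $\sim$ deliver.
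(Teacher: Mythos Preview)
Your proof is correct and is exactly the routine verification the paper has in mind; the paper does not give a proof at all, simply declaring the result routine, so there is nothing further to compare.
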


Additive ideals arise from morphisms between Boolean inverse semigroups.
Let $\theta \colon S \rightarrow T$ be a morphism between Boolean inverse semigroups.
The set 
$$\mbox{ker}(\theta) = \{s \in S \colon \theta (s) = 0\}$$
is called the {\em kernel} of $\theta$.
Clearly, $\mbox{ker}(\theta)$ is an additive ideal of $S$. 
Similarly, we define the kernel of an additive congruence to be the class of the zero.
However, Boolean inverse semigroups are not rings and not every morphism is determined by its kernel.
We now examine which are.
Let $I$ be an additive ideal of the Boolean inverse semigroup $S$.
Define the relation $\varepsilon_{I}$ on $S$ as follows:
$$(a,b) \in \varepsilon_{I}
\Leftrightarrow
\exists
c \leq a,b \text{ such that } (a \setminus c), (b \setminus c) \in I.$$
Then $\varepsilon_{I}$ is an additive congruence with kernel $I$.
We shall write $S/I$ instead of $S/\varepsilon_{I}$.
We say that an additive congruence is {\em ideal-induced} if it equals $\varepsilon_{I}$ for some additive ideal $I$.
The following result is due to Ganna Kudryavtseva (private communication)
and characterizes exactly which morphisms of Boolean inverse semigroups are ideal-induced.





\begin{proposition}\label{prop:anja} A morphism of Boolean inverse semigroups is weakly meet preserving if
and only if its associated congruence is ideal-induced.
\end{proposition}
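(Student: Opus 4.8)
The plan is to prove the two directions separately, with the forward direction (ideal-induced $\Rightarrow$ weakly meet preserving) being the routine one and the converse being the substance of the argument.

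For the forward direction, suppose the congruence of $\theta \colon S \to T$ is $\varepsilon_I$ for an additive ideal $I$, so that $T$ may be identified with $S/I$ and $\theta$ with the quotient map. Given $a, b \in S$ and $t \in T$ with $t \leq \theta(a), \theta(b)$, I would lift $t$ to some $x \in S$ with $\theta(x) = t$, and then replace $x$ by $x' = x\mathbf{d}(a)\mathbf{d}(b)$ so that $x' \leq a'$ and $x' \leq b'$ for suitable representatives; the point is that $\theta(x') = \theta(x) \wedge \theta(a) \wedge \theta(b) = t$ since $t$ already lies below $\theta(a)$ and $\theta(b)$. Then $x'' = x' \wedge a \wedge b$ need not exist in $S$, so instead one works with $c = x'$ cut down appropriately: from $t \leq \theta(a)$ one gets $\theta(a \setminus x') $ orthogonal to $t$, hence $a \setminus x' \in$ (something controlled by $I$), and one extracts $c \leq a, b$ with $\theta(c) = t \leq \theta(c)$ using that $\varepsilon_I$ is ideal-induced together with the description of $\varepsilon_I$ via a common lower bound modulo $I$. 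This is bookkeeping with the skew difference and Lemma~\ref{lem:fanta}.

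The converse is the heart of the matter. Assume $\theta \colon S \to T$ is weakly meet preserving; put $I = \mathrm{ker}(\theta)$, an additive ideal by the remarks preceding the proposition, and let $\rho$ be the Kernel congruence of $\theta$. Plainly $\varepsilon_I \subseteq \rho$: if $c \leq a,b$ with $a \setminus c, b \setminus c \in I$ then $\theta(a) = \theta(c) \vee \theta(a \setminus c) = \theta(c) = \theta(b)$. The work is the reverse inclusion $\rho \subseteq \varepsilon_I$. So suppose $\theta(a) = \theta(b)$. Set $t = \theta(a) = \theta(b)$; since $t \leq \theta(a), \theta(b)$, weak meet preservation gives $c \leq a, b$ with $t \leq \theta(c)$. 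But $\theta(c) \leq \theta(a) = t$ as well (order is preserved), so $\theta(c) = t = \theta(a) = \theta(b)$. Now $a = c \vee (a \setminus c)$ with $c \perp (a\setminus c)$, so $\theta(a) = \theta(c) \vee \theta(a \setminus c)$; since $\theta(c) = \theta(a)$ and the join is orthogonal (hence the two pieces are complements in $\theta(a)^{\downarrow}$... more precisely $\theta(a \setminus c) = \theta(a) \setminus \theta(c) = 0$), we conclude $a \setminus c \in I$ and likewise $b \setminus c \in I$. Hence $(a,b) \in \varepsilon_I$, as required. Finally one checks $I = \mathrm{ker}(\varepsilon_I)$, which is immediate from the definition of $\varepsilon_I$ (take $b = 0$, $c = 0$). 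Thus $\rho = \varepsilon_I$ is ideal-induced.

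The main obstacle I anticipate is not in the converse — once one uses $t = \theta(a)$ itself as the test element, it collapses quickly — but in the forward direction: producing a genuine common lower bound $c \leq a, b$ in $S$ (not merely modulo $I$) from the data $t \leq \theta(a), \theta(b)$ when all one knows is that $\varepsilon_I$ is ideal-induced. The resolution is to observe that in $S/I$ the classes $[a]$ and $[b]$ have a meet (computed via skew differences in $S$ and then read off), lift a witnessing element, and multiply it into $\mathbf{d}(a)\mathbf{d}(b)$ using Lemma~\ref{lem:fanta} to turn the skew join back into an honest join; care is needed to ensure the lift actually lies below both $a$ and $b$ in $S$ rather than merely below their images, which is precisely where one exploits that $\varepsilon_I$-equivalence means agreement after cutting by an element of $I$.
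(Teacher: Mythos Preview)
Your converse direction is correct and essentially identical to the paper's: set $t=\theta(a)=\theta(b)$, invoke weak meet preservation to get $c\leq a,b$ with $t\leq\theta(c)$, deduce $\theta(c)=t$, and read off $a\setminus c,\,b\setminus c\in I$.

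The forward direction, however, has a genuine gap. Your element $x'=x\,\mathbf{d}(a)\,\mathbf{d}(b)$ satisfies $x'\leq x$, but there is no reason for $x'\leq a$ or $x'\leq b$ in $S$; multiplying $x$ on the right by idempotents cuts $x$ down, not $a$ or $b$. The vague appeal to ``suitable representatives'' and the later plan involving skew differences do not repair this: you never produce an element of $S$ that is honestly below both $a$ and $b$.

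The paper's trick is to reverse the roles. From $[t]\leq[a]$ one has $[t]=[a\,t^{-1}t]$, and the point is that $a\,t^{-1}t\leq a$ holds \emph{in $S$}. Now apply the very definition of $\varepsilon_I$ to the pair $(t,\,a\,t^{-1}t)$: there is $u\leq t$ and $u\leq a\,t^{-1}t\leq a$ with $t\setminus u\in I$, so $[u]=[t]$. Do the same with $b$ to get $v\leq t$, $v\leq b$, $[v]=[t]$. Since $u,v\leq t$ they are compatible, hence $u\wedge v$ exists; it lies below both $a$ and $b$, and $t\setminus(u\wedge v)=(t\setminus u)\vee(t\setminus v)\in I$ gives $[u\wedge v]=[t]$. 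Take $c=u\wedge v$. The step you were missing is passing from $[t]\leq[a]$ to the representative $a\,t^{-1}t$ of $[t]$ that already sits below $a$ in $S$, and then using the $\varepsilon_I$ witness to pull this back to something below $t$ as well.
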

\begin{proof} Let $I$ be an additive ideal of $S$ and let $\varepsilon_{I}$ be its associated additive congruence on $S$.
Denote by $\nu \colon S \rightarrow S/\varepsilon_{I}$ is associated natural morphism.
We prove that $\nu$ is weakly meet preserving.
Denote the $\varepsilon_{I}$-class containing $s$ by $[s]$.
Let $[t] \leq [a], [b]$.
Then $[t] = [at^{-1}t]$ and $[t] = [bt^{-1}t]$.
By definition there exist $u,v \in S$ such that
$u \leq t,at^{-1}t$ and $v \leq t,bt^{-1}t$ such that
$(t \setminus u), (at^{-1}t \setminus u), (t \setminus v), (bt^{-1}t \setminus v) \in I$.
Now $[t] = [u] = [at^{-1}t]$ and $[t] = [v] = [bt^{-1}t]$.
Since $u,v \leq t$ it follows that $u \sim v$ and so $u \wedge v$ exists by \cite[Lemma 1.4.11]{Law1}.
Clearly, $u \wedge v \leq a,b$.
In addition $[t] = [u \wedge v]$.
We have proved that $\nu$ is weakly meet preserving.

Conversely, let $\theta \colon S \rightarrow T$ be weakly meet preserving.
Put $I = \mbox{ker}(\theta)$.
We prove that $\theta (a) = \theta (b)$ if and only if $(a,b) \in \varepsilon_{I}$.
Suppose first that $(a,b) \in \varepsilon_{I}$.
Then by definition, there is an element $u \leq a,b$ such that $(a \setminus u), (b \setminus u) \in I$.
But then $a = (a \setminus u) \vee u$ and $b = (b \setminus u) \vee u$.
It follows that $\theta (a) = \theta (u) = \theta (b)$.
Conversely, suppose that $\theta (a) = \theta (b)$.
Put $t = \theta (a) = \theta (b)$.
Then by the definition of a weakly meet preserving map, there exists $c \leq a,b$ such that $t \leq \theta (c)$.
It follows that $\theta (a) = \theta (c) = \theta (b)$.
Thus $\theta (a \setminus c) = 0 = \theta (b \setminus c)$.
We have therefore proved that $(a \setminus c), (b \setminus c) \in I$ and so $(a,b) \in \varepsilon_{I}$.
\end{proof}

We now develop a refinement of non-commutative Stone duality, Theorem~\ref{them:ncsd}, by restricting the class of morphisms considered.
As a first step, we prove the following lemma.

\begin{lemma}\label{lem:bingo} Let $\theta \colon H \rightarrow G$ be coherent continuous covering functor between Boolean groupoids.
Suppose, in addition, that the image of $\theta$ is an invariant subspace of $G$ and that $\theta$ induces a homeomorphism between $H$ and this image.
Then $\theta^{-1} \colon \mathsf{KB}(G) \rightarrow \mathsf{KB}(H)$ is a surjective (and so proper) weakly meet preserving morphism.
\end{lemma}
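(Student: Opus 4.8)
The plan is to unwind the definitions of the three hypotheses and produce, for each pair $U, V \in \mathsf{KB}(G)$ and each $W \in \mathsf{KB}(H)$ with $W \subseteq \theta^{-1}(U) \cap \theta^{-1}(V)$, an explicit compact-open partial bisection $Z \in \mathsf{KB}(G)$ with $Z \subseteq U \cap V$ and $W \subseteq \theta^{-1}(Z)$. First I would record that $\theta^{-1}$ is genuinely a morphism of Boolean inverse semigroups: since $\theta$ is continuous and coherent, $\theta^{-1}$ sends compact-open sets to compact-open sets, and it clearly commutes with multiplication and with compatible joins (preimages commute with unions and intersections); that $\theta^{-1}$ sends partial bisections to partial bisections follows because $\theta$ is a functor, so $\theta^{-1}(X)^{-1}\theta^{-1}(X) = \theta^{-1}(X^{-1}X) \subseteq \theta^{-1}(G_o) = H_o$. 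Surjectivity is the point where the covering-functor hypothesis does its work: given $W \in \mathsf{KB}(H)$, I would take the compact-open set $\theta(W) \subseteq G$ and show it is a partial bisection of $G$; here I use that $\theta$ restricted to $H$ is a homeomorphism onto its (invariant) image, so $\theta|_W$ is injective, and the covering property prevents two arrows of $W$ with distinct domains from mapping to arrows with the same domain. Then $\theta^{-1}(\theta(W)) = W$ because $\theta$ is injective on $H$; so $\theta^{-1}$ is surjective, and by the remark in the definition of \emph{proper} this gives properness for free.

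The heart of the matter is weak meet preservation. Suppose $W \subseteq \theta^{-1}(U)$ and $W \subseteq \theta^{-1}(V)$ in $\mathsf{KB}(H)$. Then $\theta(W) \subseteq U$ and $\theta(W) \subseteq V$ as subsets of $G$, so $\theta(W) \subseteq U \cap V$. The set $U \cap V$ is itself a compact-open partial bisection of $G$ (intersection of compact-open partial bisections), and it satisfies $W = \theta^{-1}(\theta(W)) \subseteq \theta^{-1}(U \cap V)$. Taking $Z = U \cap V$, which of course satisfies $Z \le U$ and $Z \le V$ in $\mathsf{KB}(G)$, we have found the required witness. So in fact the map $\theta^{-1}$ is not merely weakly meet preserving but meet preserving, which is stronger.

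The step I expect to be the genuine obstacle is verifying that $\theta(W)$ is a compact-open \emph{partial bisection} of $G$, i.e.\ that $\theta(W)^{-1}\theta(W)$ and $\theta(W)\theta(W)^{-1}$ lie in $G_o$, and that $\theta^{-1}(\theta(W)) = W$ exactly. Compactness and openness of $\theta(W)$ come from $\theta$ being a homeomorphism onto its image together with that image being open (which I would need to extract from the invariance-plus-homeomorphism hypothesis, perhaps using that an étale groupoid's identity space meets every invariant subspace in an open set, or by a direct argument that a covering functor inducing a homeomorphism onto an invariant subspace has open image). The partial-bisection property is where I must combine all three hypotheses: injectivity of $\theta$ on arrows rules out coincidences $\mathbf{d}(\theta(x)) = \mathbf{d}(\theta(y))$ with $\theta(x) \ne \theta(y)$ coming from $x \ne y$ in $H$ once one notes $\mathbf{d}(\theta(x)) = \theta(\mathbf{d}(x))$, and the covering property is what guarantees the counting is correct fibrewise. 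I would isolate this as a short separate claim — "$\theta(W) \in \mathsf{KB}(G)$ whenever $W \in \mathsf{KB}(H)$, and $\theta^{-1}\theta(W)=W$" — prove it carefully, and then the rest of the lemma is the two-line argument above.
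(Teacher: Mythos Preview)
Your argument for weak meet preservation is essentially fine (with one caveat below), but the surjectivity argument has a genuine gap. You correctly flag it yourself: you need $\theta(W)$ to be open in $G$, and you plan to get this by showing that $\mathrm{im}(\theta)$ is open in $G$. That hope is unfounded. Nothing in the hypotheses forces the image to be open; in the only application the paper makes of this lemma (Proposition~\ref{prop:earth}), the map $\theta$ is the inclusion $G|_{X} \hookrightarrow G$ for a \emph{closed} invariant subset $X \subseteq G_{o}$, so $\mathrm{im}(\theta)$ is closed and typically not open. Consequently $\theta(W)$, while compact and a partial bisection, is in general not open in $G$, hence not an element of $\mathsf{KB}(G)$, and there is no reason to expect $\theta^{-1}(\theta(W)) = W$ to exhibit $W$ as a value of $\theta^{-1}$.

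The paper sidesteps this entirely. Given $B \in \mathsf{KB}(H)$, the set $\theta(B)$ is open only in $\mathrm{im}(\theta)$, so one writes $\theta(B) = \mathrm{im}(\theta) \cap U$ for some open $U \subseteq G$, covers $U$ by compact-open partial bisections $A_{i} \in \mathsf{KB}(G)$, and uses compactness of $\theta(B)$ to reduce to finitely many $A_{1},\dots,A_{n}$, giving $B = \theta^{-1}(A_{1}) \cup \dots \cup \theta^{-1}(A_{n})$. The $\theta^{-1}(A_{i})$ are pairwise compatible (their union $B$ is a partial bisection), but the $A_{i}$ themselves need not be compatible in $\mathsf{KB}(G)$, so one cannot simply take their union. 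This is where Lemma~\ref{lem:irn} on the skew join earns its keep: since the images $\theta^{-1}(A_{i})$ are compatible, iterating $A_{1} \,\triangledown\, A_{2} \,\triangledown\, \cdots$ produces an $A \in \mathsf{KB}(G)$ with $\theta^{-1}(A) = B$.

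A smaller caveat on your weak-meet-preservation step: you take $Z = U \cap V$ and assert this lies in $\mathsf{KB}(G)$. That requires $U \cap V$ to be compact, which is automatic when $G$ is Hausdorff but can fail otherwise (Boolean groupoids here are only assumed to have Hausdorff identity space). This is easily repaired by a compactness argument similar to the one above, or --- as the paper implicitly does --- by invoking the duality Theorem~\ref{them:ncsd}(3), under which coherent continuous covering functors already correspond to weakly-meet-preserving proper morphisms, leaving only surjectivity to be proved.
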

\begin{proof} Since $\theta$ is injective, it induces an injective function between $\{h \in H \colon \mathbf{d}(h) = e\}$
and the set $\{g \in G \colon \mathbf{d}(g) = \theta (e)\}$.
Now let $g \in G$ be such that $\mathbf{d}(g) = \theta (e)$.
By assumption, $\theta (H)$ is an invariant subset of $G$.
Thus $g \in \theta (H)$.
It follows that there is an $h \in H$ such that $\theta (h) = g$.
In particular, $\theta (\mathbf{d}(h)) = \theta (e)$.
But $\theta$ is injective and so $\mathbf{d}(h) = e$.
We have therefore proved that $\theta$ is a covering functor.
It therefore only remains to prove that $\theta^{-1}$ is surjective.
Let $B \in \mathsf{KB}(H)$.
Since $\theta$ is a homeomorphism, we know that $\theta (B)$ is open in the image of $\theta$.
Thus there is an open subset $U$ of $G$ such that $\theta (B) = \mbox{im}(\theta) \cap U$.
However, $U$ is a union of compact-open partial bisections $A_{i}$ in $G$.
Thus $\theta (B) = \mbox{im}(\theta) \cap \left( \bigcup_{i \in I} A_{i} \right)$.
But $\theta (B)$ is compact and so 
$\theta (B) = \mbox{im}(\theta) \cap \left( \bigcup_{i=1}^{n} A_{i} \right)$
for some finite subset of the compact-open partial bisections $A_{i}$.
It follows that $B = \theta^{-1}(A_{1}) \cup \ldots \cup \theta^{-1}(A_{n})$.
In particular, the elements $\theta^{-1}(A_{i})$ and $\theta^{-1}(A_{j})$ are compatible when $i \neq j$.
We now apply Lemma~\ref{lem:irn}, to construct an element $A \in \mathsf{KB}(G)$ such that $\theta^{-1}(A) = B$.
\end{proof}

We now focus on the relationship between additive ideals of a Boolean inverse semigroup and appropriate structures in its Stone groupoid.
A good deal of the following result is proved in \cite{Lenz} but we give all the details for the sake of completeness.

\begin{lemma}\label{lem:cola} Let $S$ be a Boolean inverse semigroup.
There is a dual order isomorphism between the set of additive ideals of $S$ and the set of
closed invariant subspaces of $\mathsf{G}(S)_{o}$.
\end{lemma}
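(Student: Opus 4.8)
The plan is to establish a correspondence at the level of the Stone space and its compact-open sets, then transport it to groupoids using invariance. Recall from commutative Stone duality that the generalized Boolean algebra $\mathsf{E}(S)$ of idempotents corresponds to the locally compact Boolean space $\mathsf{G}(S)_{o}$, with $e \in \mathsf{E}(S)$ corresponding to the compact-open set $V_{e} \subseteq \mathsf{G}(S)_{o}$. Under this duality, ideals of $\mathsf{E}(S)$ (in the lattice sense) correspond dually to closed subsets of $\mathsf{G}(S)_{o}$: an ideal $J$ of $\mathsf{E}(S)$ sends to the closed set $\bigcap_{e \in J}(\mathsf{G}(S)_{o} \setminus V_{e})$, i.e.\ the set of ultrafilters on $\mathsf{E}(S)$ containing no element of $J$; conversely a closed set $C$ sends to $\{e \colon V_{e} \cap C = \emptyset\}$. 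This is an order-reversing bijection. So the skeleton of the argument is: (1) restrict an additive ideal $I$ of $S$ to $\mathsf{E}(I) = I \cap \mathsf{E}(S)$, which is an ideal of the Boolean algebra $\mathsf{E}(S)$; (2) apply commutative Stone duality to get a closed subset of $\mathsf{G}(S)_{o}$; (3) show this closed subset is invariant; (4) show the assignment $I \mapsto \mathsf{E}(I)$ is a bijection between additive ideals of $S$ and ideals of $\mathsf{E}(S)$ that happen to give invariant closed sets, i.e.\ that the whole correspondence is a bijection and order-reversing.

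First I would verify that $\mathsf{E}(I)$ is an ideal of $\mathsf{E}(S)$: it is downward closed since $I$ is an ideal of $S$ (if $f \leq e \in \mathsf{E}(I)$ then $f = fe \in SIS \subseteq I$), and closed under joins since $I$ is additive and idempotents in $\mathsf{E}(S)$ are automatically compatible. Next, I would show that an additive ideal is determined by its trace on idempotents: if $s \in I$ then $\mathbf{d}(s) = s^{-1}s \in I$, so $\mathbf{d}(s) \in \mathsf{E}(I)$; conversely, if $e = \mathbf{d}(s) \in \mathsf{E}(I)$ then $s = se \in SIS \subseteq I$. Hence $I = \{s \in S \colon \mathbf{d}(s) \in \mathsf{E}(I)\}$, and $I \mapsto \mathsf{E}(I)$ is injective. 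For surjectivity onto the class of ideals of $\mathsf{E}(S)$ whose Stone-dual closed set is invariant, given such an ideal $J$ define $I_{J} = \{s \in S \colon \mathbf{d}(s) \in J\}$; one checks $I_{J}$ is a semigroup ideal (using $\mathbf{d}(sat) \leq \mathbf{d}(t)$ and downward closure of $J$, together with $\mathbf{d}$ for the left side via $\mathbf{r}$ and invariance) and additive, and $\mathsf{E}(I_{J}) = J$.

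The key step --- and the one where the hypothesis ``invariant'' does real work --- is the matching between the semigroup-ideal condition on $I_J$ and invariance of the dual closed subspace. An ultrafilter $A$ on $S$ lies in the closed set dual to $\mathsf{E}(I)$ precisely when $A$ contains no element of $I$, equivalently $\mathbf{d}(A)$ (an ultrafilter on $\mathsf{E}(S)$) contains no element of $\mathsf{E}(I)$. I would show: the closed set $\{A \in \mathsf{G}(S)_{o} \colon A \cap I = \emptyset\}$ is invariant iff, for every $s \in S$, $\mathbf{d}(s) \in \mathsf{E}(I) \Leftrightarrow \mathbf{r}(s) \in \mathsf{E}(I)$ --- which is exactly the condition that $I$, built from its diagonal part, is closed under the two-sided multiplication $s \mapsto s^{-1}(\text{thing})s$ and its mirror. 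This uses the description of invariant subsets of $G_o$ recorded in Section~2 ($g^{-1}g \in X \Leftrightarrow gg^{-1} \in X$), transported across the duality $V_e \leftrightarrow e$ and the fact that $V_{s}$ for $s \in S$ conjugates $V_{\mathbf{d}(s)}$ to $V_{\mathbf{r}(s)}$ inside $\mathsf{G}(S)$. Finally, order-reversal is immediate: $I_{1} \subseteq I_{2}$ iff $\mathsf{E}(I_{1}) \subseteq \mathsf{E}(I_{2})$ iff the dual closed sets are reverse-included, by commutative Stone duality. The main obstacle I anticipate is bookkeeping around the non-unital setting --- making sure ``closed'' is taken in the locally compact (not compact) space $\mathsf{G}(S)_{o}$ and that the ideal $J$ need not contain a top element --- but the arguments of \cite{Lenz} cover the substance, and the above reductions reduce everything to the idempotent level where commutative Stone duality applies cleanly.
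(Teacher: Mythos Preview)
Your proposal is correct and follows essentially the same route as the paper. The paper works directly with \emph{open} invariant subsets, defining $\mathsf{O}(I) = \bigcup_{e \in \mathsf{E}(I)} V_{e}$ and $\mathsf{I}(U) = \{s \colon V_{s^{-1}s} \subseteq U\}$, verifies these are mutually inverse (the compactness of $V_{s^{-1}s}$ is used to show $\mathsf{I}\mathsf{O}(I) \subseteq I$), and then complements at the end; you instead invoke commutative Stone duality for $\mathsf{E}(S)$ as a black box and factor the bijection through the trace $I \mapsto \mathsf{E}(I)$. The substantive computations coincide: your ``key step'' that invariance of the closed set is equivalent to $\mathbf{d}(s) \in J \Leftrightarrow \mathbf{r}(s) \in J$ is exactly what the paper proves when it shows $V_{s^{-1}s} \subseteq U \Leftrightarrow V_{ss^{-1}} \subseteq U$ via the conjugation $A \mapsto (As)^{\uparrow}$, and your $I_{J}$ is the paper's $\mathsf{I}(U)$ for $U$ the complement of the closed set dual to $J$.
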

\begin{proof} We first show that there is an order isomorphism between the set of additive ideals of $S$
and the set of open invariant subsets of $\mathsf{G}(S)_{o}$.

Let $I$ be an additive ideal of $S$.
Define
$$\mathsf{O}(I) = \bigcup_{e \in \mathsf{E}(I)} V_{e}.$$
By construction, this is an open subset of $\mathsf{G}(S)_{o}$.
We prove that it is also invariant.
Let $A$ be an ultrafilter in $S$ such that $A^{-1} \cdot A \in \mathsf{O}(I)$.
Then there exists $a \in A$ and $e \in \mathsf{E}(I)$ such that $a^{-1}a \leq e$.
Because $I$ is an ideal, it follows that $a^{-1}a \in I$,
and so $a \in I$
from which we get that $aa^{-1} \in I$.
Thus $A \cdot A^{-1} \in \mathsf{O}(I)$, as required.

Let $U \subseteq \mathsf{G}(S)_{o}$ be an open invariant subset.
Observe first that the invariance of $U$ implies that $V_{s^{-1}s} \subseteq U$ if and only if $V_{ss^{-1}} \subseteq U$.
To see why, suppose that $V_{s^{-1}s} \subseteq U$.
We prove that $V_{ss^{-1}} \subseteq U$.
Let $A \in V_{ss^{-1}}$.
Since $ss^{-1} \in A$, and $A$ is also an inverse subsemigroup, we know from the theory of ultrafilters
that $B = (As)^{\uparrow}$ is a well-defined ultrafilter.
Observe that the ultrafilter $\mathbf{d}(B)$ contains the element $s^{-1}s$ so that $\mathbf{d}(B) \in U$.
But $U$ is an invariant subset and so $A = \mathbf{r}(B) \in U$, as claimed.
Define
$$\mathsf{I}(U) = \{s \in S \colon V_{s^{-1}s} \subseteq U \}.$$
It is routine to check that this is an additive ideal of $S$.

It is clear that both $\mathsf{O}$ and $\mathsf{I}$ preserve set inclusion.
It remains only to show that they are mutually inverse.
Let $I$ be an additive ideal of $S$.
Suppose that $s \in I$.
Then $s^{-1}s \in I$.
It follows that $s \in \mathsf{I} \mathsf{O} (I)$.
Suppose that $s \in \mathsf{I} \mathsf{O} (I)$.
Then $V_{s^{-1}s} \subseteq \bigcup_{e \in \mathsf{E}(S)} V_{e}$.
But $V_{s^{-1}s}$ is compact.
Thus there are a finite number of idempotents $e_{1}, \ldots, e_{m} \in I$ such that
$V_{s^{-1}s} \subseteq V_{e_{1}} \cup \ldots \cup V_{e_{m}} = V_{e_{1} \vee \ldots \vee e_{m}}$.
It follows that $s^{-1}s \leq e_{1} \vee \ldots \vee e_{m}$.
But $I$ is an additive ideal so that $e_{1} \vee \ldots \vee e_{m} \in I$
from which we get that $s^{-1}s \in I$ and so $s \in I$, as required.
We have therefore proved that $I = \mathsf{I} \mathsf{O}(I)$.
Now let $U$ be an open invariant subset of $\mathsf{G}(S)_{o}$.
Clearly, $\mathsf{O} \mathsf{I} (U) \subseteq U$.
To prove the reverse inclusion, let $x \in U$.
Since $U$ is an open set there is an idempotent $e \in S$ such that
$x \in V_{e} \subseteq U$, from the properties of the topology on $\mathsf{G}(S)$.
It follows that $e \in \mathsf{I}(U)$.
It is now immediate that $x \in \mathsf{O} \mathsf{I} (U)$.

To finish off, there is a dual order isomorphism between the set of open invariant subsets of $\mathsf{G}(S)_{o}$
and the set of closed invariant subsets of $\mathsf{G}(S)_{o}$ which is simply proved using set complementation with respect to $\mathsf{G}(S)_{o}$.\end{proof}

Let $G$ be a Boolean groupoid and let $X$ be a closed invariant subset of $G_{o}$.
Denote by $I_{X}$ the additive ideal in $\mathsf{KB}(G)$ associated with it as guaranteed by Lemma~\ref{lem:cola}.
The following explicit description of $I_{X}$ is immediate from the constructions and the definition of an invariant subset.

\begin{lemma}\label{lem:stuff}
Let $G$ be a Boolean groupoid and let $X$ be a closed invariant subset of $G_{o}$.
Then
$$A \in I_{X} 
\Longleftrightarrow A^{-1}A \cap X = \varnothing 
\Longleftrightarrow AA^{-1} \cap X = \varnothing
\Longleftrightarrow A \cap G_{X} = \varnothing.$$
\end{lemma}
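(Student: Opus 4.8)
The plan is to unwind the description of $I_X$ furnished by Lemma~\ref{lem:cola} and then read off the three equivalent conditions one at a time; every step is bookkeeping already prepared in Section~6, so the argument is short.

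First I would recall how $I_X$ arises. Since $X$ is a closed invariant subset of $G_o$, its complement $U = G_o \setminus X$ is an open invariant subset, and under the dual order isomorphism of Lemma~\ref{lem:cola} the additive ideal attached to $X$ is
$$I_X = \{A \in \mathsf{KB}(G) \colon V_{A^{-1}A} \subseteq U\},$$
the image of $U$ under the map $\mathsf{I}$ appearing in the proof of Lemma~\ref{lem:cola}. Here I invoke Theorem~\ref{them:ncsd}(2), which identifies $G$ with $\mathsf{G}(\mathsf{KB}(G))$ via the homeomorphism carrying each compact-open partial bisection $B$ of $G$ to the basic open set $V_B$ of $\mathsf{G}(\mathsf{KB}(G))$; in particular, for the idempotent $A^{-1}A$ --- which is just the compact-open subset $\mathbf{d}(A)$ of $G_o$ --- the set $V_{A^{-1}A}$ is identified with $\mathbf{d}(A) = A^{-1}A$ itself. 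Hence $A \in I_X$ iff $A^{-1}A \subseteq U$ iff $A^{-1}A \cap X = \varnothing$, which is the first equivalence.

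The remaining two equivalences follow from the partial bisection property of $A$ together with the invariance of $X$. For $A^{-1}A \cap X = \varnothing \Leftrightarrow AA^{-1} \cap X = \varnothing$, I would either quote the invariance computation from the proof of Lemma~\ref{lem:cola} (that $V_{s^{-1}s} \subseteq U \Leftrightarrow V_{ss^{-1}} \subseteq U$ for open invariant $U$), or argue directly: the rule $\mathbf{d}(g) \mapsto \mathbf{r}(g)$ defines a bijection $A^{-1}A = \mathbf{d}(A) \to \mathbf{r}(A) = AA^{-1}$, and invariance of $X$ gives $\mathbf{d}(g) \in X \Leftrightarrow \mathbf{r}(g) \in X$, so one intersection is empty exactly when the other is. For the last equivalence, note that $\mathbf{d}$ restricts to a bijection $A \to A^{-1}A$; therefore $A \cap G_X = \varnothing$ --- no arrow of $A$ has its units in $X$ --- holds iff $\mathbf{d}(g) \notin X$ for every $g \in A$ iff $A^{-1}A \cap X = \varnothing$, using once more that invariance of $X$ makes $G_X = G|_X$ coincide with $\{g \in G \colon \mathbf{d}(g) \in X\}$.

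The only point that needs a moment's care is the identification used in the second paragraph: confirming that, under $G \cong \mathsf{G}(\mathsf{KB}(G))$, the abstract basic open set $V_{A^{-1}A}$ in the Stone groupoid is literally the compact-open subset $A^{-1}A$ of $G_o$. Once that is in place, everything else is a direct translation between the groupoid side and the semigroup side of non-commutative Stone duality, and there is no real obstacle.
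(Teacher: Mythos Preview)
Your proposal is correct and follows exactly the route the paper intends: the paper itself offers no detailed argument, stating only that the description of $I_X$ ``is immediate from the constructions and the definition of an invariant subset,'' and your write-up is precisely the unwinding of those constructions (Lemma~\ref{lem:cola} plus invariance). The one point you flag as needing care --- identifying $V_{A^{-1}A}$ with the compact-open set $A^{-1}A \subseteq G_o$ under the isomorphism $G \cong \mathsf{G}(\mathsf{KB}(G))$ of Theorem~\ref{them:ncsd}(2) --- is exactly what the paper leaves implicit, and your treatment of it is sound.
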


The following result was stated, but not proved, at \cite[page~75]{Paterson}.

\begin{lemma}\label{lem:water} Let $G$ be a Boolean groupoid and let $X \subseteq G_{o}$ be a closed, invariant subset.
Then $G|_{X}$ is a Boolean groupoid with space of identities homeomorphic to $X$.
\end{lemma}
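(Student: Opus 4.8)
The plan is to unpack the three assertions bundled into the statement: that $G|_{X}$ is a subgroupoid of $G$, that with the subspace topology it is \'etale, and that its space of identities --- which is literally $X$ --- is a locally compact Boolean space. Only one step uses the hypothesis that $X$ be \emph{invariant} in an essential way, namely the verification that the domain and range maps restrict to local homeomorphisms; the rest is either formal or standard point-set topology, so I will be brief about those parts.

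First I would record that, since $X$ is invariant, for every $g \in G$ we have $\mathbf{d}(g) \in X$ if and only if $\mathbf{r}(g) \in X$, so that
$$G|_{X} = \{g \in G \colon \mathbf{d}(g) \in X\} = \{g \in G \colon \mathbf{r}(g) \in X\}.$$
From this it is immediate that $G|_{X}$ is closed under the partial multiplication and under inversion: if $gh$ is defined with $g,h \in G|_{X}$ then $\mathbf{d}(gh) = \mathbf{d}(h) \in X$ and $\mathbf{r}(gh) = \mathbf{r}(g) \in X$. Hence $G|_{X}$ is a subgroupoid of $G$ with space of identities $G_{o} \cap X = X$. Giving $G|_{X}$ the subspace topology inherited from $G$, multiplication and inversion are continuous, being restrictions of the corresponding continuous maps on $G$ (the set of composable pairs of $G|_{X}$ carrying the subspace topology from that of $G$), so $G|_{X}$ is a topological groupoid.

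Next I would prove that $\mathbf{d}$ (and, symmetrically, $\mathbf{r}$) restricts to a local homeomorphism $G|_{X} \to X$. Given $g \in G|_{X}$, use that $G$ is \'etale to choose an open set $U$ in $G$ with $g \in U$ such that $\mathbf{d}$ maps $U$ homeomorphically onto an open subset $\mathbf{d}(U)$ of $G_{o}$; in particular $\mathbf{d}$ is injective on $U$. Put $V = U \cap G|_{X}$, an open neighbourhood of $g$ in $G|_{X}$. The key claim is that $\mathbf{d}(V) = \mathbf{d}(U) \cap X$, which is open in $X$. The inclusion $\subseteq$ is clear. For $\supseteq$, if $e \in \mathbf{d}(U) \cap X$ there is a (unique) $h \in U$ with $\mathbf{d}(h) = e$; since $e \in X$, invariance of $X$ gives $\mathbf{r}(h) \in X$, so $h \in G|_{X}$ and therefore $h \in V$, whence $e = \mathbf{d}(h) \in \mathbf{d}(V)$. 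Since $\mathbf{d}|_{U}$ is a homeomorphism onto $\mathbf{d}(U)$, its restriction $\mathbf{d}|_{V}$ is a continuous bijection of $V$ onto $\mathbf{d}(U) \cap X$ whose inverse is the restriction of $(\mathbf{d}|_{U})^{-1}$, hence a homeomorphism. Thus $\mathbf{d}$ is a local homeomorphism on $G|_{X}$, so $G|_{X}$ is \'etale; in particular $X = G_{o} \cap G|_{X}$ is open in $G|_{X}$.

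Finally I would check that $X$, with the subspace topology, is a locally compact Boolean space. As a closed subspace of the Hausdorff space $G_{o}$ it is Hausdorff; as a closed subspace of a locally compact Hausdorff space it is locally compact; and a subspace of a $0$-dimensional space is $0$-dimensional. (Equivalently, intersect a basis of compact-open sets of $G_{o}$ with $X$: since $X$ is closed these intersections are compact, and being traces of open sets they are open in $X$, so by Lemma~\ref{lem:boolean-space} they form a basis of compact-open sets for $X$.) Combining the three steps, $G|_{X}$ is an \'etale groupoid whose space of identities is the locally compact Boolean space $X$, i.e.\ a Boolean groupoid, and that space of identities is (literally) $X$. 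The only subtlety to watch, and the single place where invariance is indispensable, is the inclusion $\mathbf{d}(U) \cap X \subseteq \mathbf{d}(V)$ in the third step; the remainder is routine.
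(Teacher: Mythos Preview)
Your proof is correct and follows the same route as the paper's, only with considerably more detail: the paper verifies that $X$ is a locally compact Boolean space (via the same ``closed subset of a compact Hausdorff set is compact'' argument you sketch parenthetically) and then simply declares it ``routine to check that $G|_{X}$ equipped with the subspace topology is an \'etale groupoid,'' whereas you actually carry out that routine check and correctly isolate the inclusion $\mathbf{d}(U)\cap X\subseteq \mathbf{d}(V)$ as the place where invariance is genuinely needed.
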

\begin{proof} By definition, $G_{o}$ is a Hausdorff space with a basis of compact-open sets.
Subspaces of Hausdorff spaces are Hausdorff.
Let $B$ be a a compact-open subset of $G_{o}$.
Then it is also closed.
It follows that $B \cap X$ is closed.
But $B \cap X \subseteq B$ and $B$ is a compact Hausdorff space.
It follows that $B \cap X$ is compact.
Thus $X$ is a Hausdorff space with a basis of compact-open subsets and so is a Boolean space.
It is now routine to check that $G|_{X}$ equipped with the subspace topology is an \'etale groupoid.
\end{proof}

The following lemma was communicated to us by Enrique Pardo with a proof suggested by Lisa Orloff Clark.

\begin{lemma}\label{lem:fruit} Let $G$ be a topological groupoid and let $X$ be a closed invariant subset of $G_{o}$. 
If $K \subseteq G$ is compact (in $G$), then $K \cap \mathbf{d}^{-1}(X)$ is compact in $G|_{X}$.
\end{lemma}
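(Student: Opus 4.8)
The plan is to reduce everything to the elementary fact that a closed subspace of a compact space is compact. First I would observe that, because $X$ is an invariant subset of $G_{o}$, the condition $\mathbf{d}(g) \in X$ is equivalent to $\mathbf{r}(g) \in X$, so that $g \in G|_{X}$ if and only if $\mathbf{d}(g) \in X$; in other words $G|_{X} = \mathbf{d}^{-1}(X)$ as subsets of $G$. Hence $K \cap \mathbf{d}^{-1}(X) = K \cap G|_{X}$, and it is the compactness of this set, as a subspace of $G|_{X}$, that must be established.

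Next I would note that the domain map $\mathbf{d} \colon G \rightarrow G_{o}$ is continuous, being built from the (continuous) inversion and multiplication maps of the topological groupoid $G$. Since $X$ is closed in $G_{o}$, it follows that $\mathbf{d}^{-1}(X)$ is a closed subset of $G$, and therefore $K \cap \mathbf{d}^{-1}(X)$ is a closed subset of the space $K$. As $K$ is compact and a closed subspace of a compact space is compact, the set $K \cap \mathbf{d}^{-1}(X)$ is compact in the subspace topology it inherits from $K$, equivalently from $G$.

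Finally I would invoke the transitivity of the subspace topology: $G|_{X}$ carries the subspace topology inherited from $G$, and $K \cap \mathbf{d}^{-1}(X)$ is a subset of $G|_{X}$, so the topology it inherits from $G|_{X}$ coincides with the one it inherits from $G$. Since compactness is an intrinsic property of a topological space, $K \cap \mathbf{d}^{-1}(X)$ is compact in $G|_{X}$, as required.

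The only point that calls for any care is the bookkeeping about which topology is in play at each stage, together with the simple but essential observation that the implication ``closed in a compact space $\Rightarrow$ compact'' requires no separation hypothesis whatsoever; in particular, the argument is valid without first knowing that $G|_{X}$ is Hausdorff, which is exactly why this lemma is useful as a preliminary step. Everything else is immediate from the definitions of $G|_{X}$ and of an invariant subset.
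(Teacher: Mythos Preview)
Your proof is correct and follows essentially the same idea as the paper's: both use the continuity of $\mathbf{d}$ to see that $\mathbf{d}^{-1}(X)$ is closed in $G$, and then deduce that $K \cap \mathbf{d}^{-1}(X)$ is compact as a closed subset of the compact set $K$. The only difference is packaging: the paper unfolds the ``closed in compact implies compact'' argument explicitly with open covers (covering $K$ by the lifts $V_i$ together with the open complement $W = G \setminus \mathbf{d}^{-1}(X)$), whereas you invoke that fact directly and then appeal to transitivity of the subspace topology.
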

\begin{proof} The set $\mathbf{d}^{-1}(X)$ is closed in $G$
since $\mathbf{d} \colon G\rightarrow G_{o}$ is continuous,
and so $W = G \setminus \mathbf{d}^{-1}(X)$ is open.
Let $K\subseteq G$ be compact in $G$. 
Let 
$$K\cap \mathbf{d}^{-1}(X) \subseteq \bigcup_{i\in I} U_i $$
be an open covering in $G|_{X}$.
For each $i \in I$, there exists an open set $V_i \subseteq G$ such that $U_i = V_i \cap G|_{X}$. 
Thus,
$$K = (K \cap \mathbf{d}^{-1}(X)) \cup W \subseteq \bigcup_{i\in I} V_i \cup W.$$
Since $K$ is compact, there is a finite subcover
$$K \subseteq \bigcup_{i=1}^{n} V_i \cup W,$$   
from which we get that
$$K\cap \mathbf{d}^{-1}(X) \subseteq \bigcup_{i=1}^{n} U_i.$$
\end{proof}

We now assemble the above lemmas into the proof of a proposition.
Let $G$ be a Boolean groupoid and  $X$ be a closed invariant subset of $G_{o}$. 
Then $G|_{X}$ is a Boolean groupoid by Lemma~\ref{lem:water} and an invariant subgroupoid of $G$.
The embedding $G|_{X} \rightarrow G$ is coherent by Lemma~\ref{lem:fruit} and 
so this embedding is a coherent continuous covering functor.
By Lemma~\ref{lem:bingo}, there is, under non-commutative Stone duality, a surjective, weakly meet preserving morphism
$\theta \colon \mathsf{KB}(G) \rightarrow \mathsf{KB}(G|_{X})$
given by
$$\theta (A) = A \cap G|_{X} = A \cap \mathbf{d}^{-1}(X).$$
By Proposition~\ref{prop:anja}, this morphism is ideal-induced;
what that ideal should be is given by Lemma~\ref{lem:cola} and Lemma~\ref{lem:stuff}. 
We have therefore proved the following proposition;
this will deliver for us a proof of Theorem~\ref{them:four}.

\begin{proposition}\label{prop:earth} Let $G$ be a Boolean groupoid and $X$ a closed invariant subset of $G_{o}$.
Then $\mathsf{KB}(G|_{X}) \cong \mathsf{KB}(G)/I_{X}$.
\end{proposition}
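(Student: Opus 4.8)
The plan is to observe that essentially all the work has already been done by the lemmas leading up to the statement, so that the proof is just a matter of assembling them correctly and checking one compatibility point. First I would recall the setup: $G$ is a Boolean groupoid and $X \subseteq G_{o}$ is closed and invariant. By Lemma~\ref{lem:water}, $G|_{X}$ is again a Boolean groupoid, and its space of identities is homeomorphic to $X$; moreover $G|_{X}$ sits inside $G$ as a (full) invariant subgroupoid, since $X$ being invariant forces $\mathbf{d}(g) \in X \Leftrightarrow \mathbf{r}(g) \in X$. The inclusion functor $\iota \colon G|_{X} \hookrightarrow G$ is a covering functor (a point that Lemma~\ref{lem:bingo} re-derives from injectivity plus invariance of the image), it is trivially a homeomorphism onto its image $\mathbf{d}^{-1}(X)$, and Lemma~\ref{lem:fruit} supplies exactly the statement that $\iota$ is coherent, i.e.\ that preimages of compacts are compact in $G|_{X}$. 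Hence $\iota$ satisfies all the hypotheses of Lemma~\ref{lem:bingo}.

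Next I would apply Lemma~\ref{lem:bingo} to $\iota$: it yields that the induced map $\iota^{-1} = \theta \colon \mathsf{KB}(G) \rightarrow \mathsf{KB}(G|_{X})$, given concretely by $\theta(A) = A \cap G|_{X} = A \cap \mathbf{d}^{-1}(X)$, is a surjective (hence proper) weakly meet preserving morphism of Boolean inverse semigroups. Because $\theta$ is weakly meet preserving, Proposition~\ref{prop:anja} tells us its associated additive congruence is ideal-induced, so $\mathsf{KB}(G|_{X}) \cong \mathsf{KB}(G)/\ker(\theta)$. It then only remains to identify $\ker(\theta)$ with $I_{X}$. By definition $A \in \ker(\theta)$ iff $A \cap \mathbf{d}^{-1}(X) = \varnothing$, i.e.\ iff $A \cap G|_{X} = \varnothing$; and Lemma~\ref{lem:stuff} (together with the definition of $I_{X}$ via Lemma~\ref{lem:cola}) says precisely that this condition characterizes membership in $I_{X}$. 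Therefore $\ker(\theta) = I_{X}$, and the isomorphism $\mathsf{KB}(G|_{X}) \cong \mathsf{KB}(G)/I_{X}$ follows.

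The one place where a little care is genuinely needed—and what I would flag as the main (though still modest) obstacle—is the surjectivity half of Lemma~\ref{lem:bingo} as it is actually used here: given $B \in \mathsf{KB}(G|_{X})$ one writes $\theta(B) = \mathrm{im}(\iota) \cap U$ for an open $U$ in $G$, covers $U$ by compact-open partial bisections $A_i$, uses compactness of $\theta(B)$ to reduce to finitely many, and then must recombine $\theta^{-1}(A_1),\dots,\theta^{-1}(A_n)$ into a single element of $\mathsf{KB}(G)$. These need not be orthogonal, only compatible, which is why Lemma~\ref{lem:irn} (skew join) is invoked to form $A = A_1 \,\triangledown\, \cdots \,\triangledown\, A_n$ with $\theta^{-1}(A) = B$. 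This recombination step, and the verification that $\theta$ is a morphism of Boolean inverse semigroups rather than merely a map (which again rests on the skew-join machinery of Section~5), is the only substantive content; everything else is a direct citation of the preceding lemmas.
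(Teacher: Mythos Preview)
Your proposal is correct and follows essentially the same route as the paper: assemble Lemma~\ref{lem:water}, Lemma~\ref{lem:fruit}, and Lemma~\ref{lem:bingo} to obtain the surjective weakly meet preserving morphism $\theta(A)=A\cap G|_{X}$, invoke Proposition~\ref{prop:anja} to see it is ideal-induced, and then identify the kernel with $I_{X}$ via Lemma~\ref{lem:stuff} and Lemma~\ref{lem:cola}. Your additional remarks on the surjectivity step and the use of skew join are accurate glosses on how Lemma~\ref{lem:bingo} works, but they do not go beyond what the paper already does.
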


\section{The Booleanization of an inverse semigroup}

In this section, we describe the structure of the Booleanization $\mathsf{B}(S)$ of the inverse semigroup $S$ 
described in detail in \cite{Lawson2018}.
This is the basis of Theorem~\ref{them:one}.
The following is well-known \cite[page 12]{Resende}.

\begin{proposition}\label{prop:lb} Let $G$ be a groupoid.
Then $\mathsf{L}(G)$, the set of all partial bisections of $G$ under subset multiplication, 
is a Boolean inverse semigroup in which the natural partial order is subset inclusion.
\end{proposition}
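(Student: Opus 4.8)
The plan is to verify the defining properties one by one; almost everything is a routine consequence of the groupoid structure, so I would organise the argument around a handful of small lemmas and then assemble them. First I would record the closure facts: if $X,Y$ are partial bisections then so is $X^{-1}=\{x^{-1}:x\in X\}$ (immediate from the definition), and so is the subset product $XY=\{xy:x\in X,\ y\in Y,\ \mathbf{d}(x)=\mathbf{r}(y)\}$. For the latter, $(XY)^{-1}(XY)=Y^{-1}(X^{-1}X)Y\subseteq Y^{-1}G_{o}Y$, and one checks $Y^{-1}G_{o}Y\subseteq G_{o}$ for any partial bisection $Y$: given $y,y'\in Y$ and $e\in G_{o}$, the product $y^{-1}ey'$ can only be formed when $\mathbf{r}(y)=e=\mathbf{r}(y')$, which forces $y=y'$ and then $y^{-1}ey=\mathbf{d}(y)\in G_{o}$; symmetrically for $(XY)(XY)^{-1}$. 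Associativity of subset product in a groupoid is standard. Next I would compute $XX^{-1}=\{\mathbf{r}(x):x\in X\}\subseteq G_{o}$ (using the bisection property again), whence $XX^{-1}X=X$ and $X^{-1}XX^{-1}=X^{-1}$, so $\mathsf{L}(G)$ is a regular semigroup with distinguished inverse $X^{-1}$.

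To upgrade ``regular'' to ``inverse'' I would identify the idempotents: if $X=X^{2}$ and $x\in X$, writing $x=x_{1}x_{2}$ with $x_{1},x_{2}\in X$, the bisection property (comparing ranges and domains) forces $x=x_{1}=x_{2}$, so $x=x^{2}$, which in a groupoid means $x\in G_{o}$; conversely every subset $E\subseteq G_{o}$ is a partial bisection with $E^{2}=E$, and for $E,F\subseteq G_{o}$ one has $EF=E\cap F$. Hence $\mathsf{E}(\mathsf{L}(G))$ is exactly $\mathcal{P}(G_{o})$ with meet given by intersection, so idempotents commute and $\mathsf{L}(G)$ is an inverse semigroup with zero $\varnothing$, and $\mathsf{E}(\mathsf{L}(G))\cong\mathcal{P}(G_{o})$ is a (unital) Boolean algebra. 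For the natural partial order I would check both inclusions: if $X\subseteq Y$ then for each $x\in X\subseteq Y$ the unique element of $Y$ with range $\mathbf{r}(x)$ is $x$ itself, so $(XX^{-1})Y=\{\mathbf{r}(x):x\in X\}\,Y=X$, i.e. $X\leq Y$; conversely $X\leq Y$ means $X=(XX^{-1})Y$, and since $G_{o}Y=Y$ this forces $X\subseteq Y$.

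It remains to see that $\mathsf{L}(G)$ is distributive. The key lemma is that $X\sim Y$ if and only if $X\cup Y$ is a partial bisection. If $X\cup Y\in\mathsf{L}(G)$ then $X,Y\leq X\cup Y$, so $X\sim Y$ because elements of a common principal order ideal are compatible \cite[Lemma~1.4.11]{Law1}. Conversely, suppose $X\sim Y$, i.e. $XY^{-1},X^{-1}Y\subseteq G_{o}$, and take $z\in X$, $z'\in Y$ with $\mathbf{d}(z)=\mathbf{d}(z')$; then $z(z')^{-1}\in XY^{-1}$ is an identity, and in a groupoid $gh\in G_{o}$ implies $g=h^{-1}$ (track domains and ranges), so $z=z'$, and symmetrically for ranges using $X^{-1}Y$; hence $X\cup Y$ is a partial bisection, and since inclusion is the natural partial order it is visibly $X\vee Y$. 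Finally, subset product distributes over unions, $Z(X\cup Y)=ZX\cup ZY$ and $(X\cup Y)Z=XZ\cup YZ$ as plain sets, and $ZX,ZY$ (resp. $XZ,YZ$) are compatible because they lie inside the partial bisection $Z(X\cup Y)$ (resp. $(X\cup Y)Z$), so these unions are the relevant compatible joins. Thus $\mathsf{L}(G)$ is a distributive inverse semigroup whose idempotent semilattice is a Boolean algebra, i.e. a Boolean inverse semigroup, with natural partial order equal to subset inclusion.

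The main obstacle, such as it is, is pinning down the little groupoid identities precisely --- that $Y^{-1}G_{o}Y\subseteq G_{o}$, that $X=X^{2}$ forces $X\subseteq G_{o}$, and that $gh\in G_{o}$ implies $g=h^{-1}$; none is deep, but each needs careful bookkeeping of domains and ranges together with the defining property of a partial bisection. The distributivity clause is the one genuinely substantive point, since it rests on the ``compatible if and only if the union is a bisection'' characterisation rather than a one-line verification.
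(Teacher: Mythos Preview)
Your argument is correct and complete. The paper does not actually prove this proposition: it simply records it as well-known and cites \cite[page~12]{Resende}. So there is nothing to compare at the level of strategy --- you have supplied the standard direct verification that the cited reference would contain, organised exactly as one would expect (closure, regularity, identification of idempotents with $\mathcal{P}(G_{o})$, the order, and then the compatible-iff-union-is-a-bisection lemma giving distributivity). Each of the small groupoid identities you flag as needing care is handled correctly; in particular your treatment of $X=X^{2}\Rightarrow X\subseteq G_{o}$ and of $gh\in G_{o}\Rightarrow h=g^{-1}$ is sound.
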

 
Let $S$ be an inverse semigroup.
Construct the groupoid  $\mathcal{L}(S)$ of proper filters of $S$ and then the Boolean inverse semigroup $\mathsf{L}(\mathcal{L}(S))$
of all partial bisections of $\mathcal{L}(S)$.
For each $a \in S$, define $U_{a}$ to be the set of all proper filters that contains $a$.
The following is proved in \cite{Lawson2018}.

\begin{lemma}\label{lem:hill} Let $S$ be an inverse semigroup.
\begin{enumerate}
\item $U_{0} = \varnothing$. 
\item $U_{a} = U_{b}$ if and only if $a = b$.
\item $U_{a}^{-1} = U_{a^{-1}}$.
\item $U_{a}U_{b} = U_{ab}$.
\item $U_{a}$ is a partial bisection.
\item $U_{a} \cap U_{b} = \bigcup_{x \leq a,b} U_{x}$.
\end{enumerate}
\end{lemma}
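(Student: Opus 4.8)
The plan is to verify each of the six assertions directly from the definitions of proper filter, the groupoid structure on $\mathcal{L}(S)$, and the subset operations in $\mathsf{L}(\mathcal{L}(S))$. Items (1)--(3) are essentially bookkeeping. For (1), a proper filter is nonempty and does not contain $0$; since $0 \leq s$ for all $s$, any filter containing an element contains $0$ under upward closure only if it contains $0$ outright, so the only filter containing $0$ is non-proper, hence $U_{0} = \varnothing$. For (3), observe that $A \mapsto A^{-1}$ is a bijection on proper filters (as noted in Section~4, $A$ is a filter iff $A^{-1}$ is), and $a \in A \Leftrightarrow a^{-1} \in A^{-1}$, which gives $U_{a}^{-1} = U_{a^{-1}}$ after recalling that inversion in $\mathsf{L}(\mathcal{L}(S))$ is pointwise inversion of filters.

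Next I would prove (6), since (2) will follow from it. One inclusion is immediate: if $x \leq a, b$ then any filter containing $x$ contains both $a$ and $b$ by upward closure, so $U_{x} \subseteq U_{a} \cap U_{b}$. For the reverse, take $A \in U_{a} \cap U_{b}$; then $a, b \in A$, and by the defining property of a filter there is $c \in A$ with $c \leq a, b$, whence $A \in U_{c} \subseteq \bigcup_{x \leq a,b} U_{x}$. For (2): if $a \neq b$ then without loss of generality $a \not\leq b$, so $ba^{-1}a \ne a$; I would exhibit a proper filter containing $a$ but not $b$ — the principal filter $(a\mathbf{d}(A))^{\uparrow}$-style construction, concretely $\{s : a \leq s\}$ works if $a \neq 0$ (and the case $a = 0$ or $b = 0$ is handled by (1)), since $a \in a^{\uparrow}$ while $b \in a^{\uparrow}$ would force $a \leq b$. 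This filter is proper because $0 \not\geq a$ when $a \neq 0$. Hence $U_{a} = U_{b}$ forces $a \leq b$ and $b \leq a$, i.e. $a = b$.

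The substantive items are (4) and (5). For (4), I expect the main obstacle: one must check that the subset product $U_{a}U_{b}$ in $\mathsf{L}(\mathcal{L}(S))$ — which is the set of all products $A \cdot B$ with $A \in U_{a}$, $B \in U_{b}$ and $\mathbf{d}(A) = \mathbf{r}(B)$ — equals $U_{ab}$. The inclusion $U_{a}U_{b} \subseteq U_{ab}$ is clean: if $a \in A$, $b \in B$ and $A \cdot B = (AB)^{\uparrow}$ is defined, then $ab \in AB$ so $ab \in A \cdot B$, and one must note $A\cdot B$ is proper (equivalently $\mathbf{d}(A\cdot B)$ is proper, which follows since $\mathbf{d}(A) = \mathbf{r}(B)$ is proper). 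The reverse inclusion is the delicate part: given a proper filter $C$ with $ab \in C$, one must factor $C = A \cdot B$ with $a \in A$, $b \in B$. Here I would use the canonical device from Section~4: set $B = (b\,\mathbf{d}(C))^{\uparrow}$ and $A = (\mathbf{r}(C)a)^{\uparrow}$, check these are proper filters (using $ab \in C$ to see $b\,\mathbf{d}(C)$ and $\mathbf{r}(C)a$ avoid $0$), verify $a \in A$, $b \in B$, that $\mathbf{d}(A) = \mathbf{r}(B)$, and finally that $(AB)^{\uparrow} = C$ — the last using $C = (\mathbf{r}(C)(ab)\mathbf{d}(C))^{\uparrow} = (\mathbf{r}(C)a \cdot b\,\mathbf{d}(C))^{\uparrow}$. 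Finally, (5) asserts $U_{a}$ is a partial bisection, i.e. $U_{a}^{-1}U_{a}, U_{a}U_{a}^{-1} \subseteq \mathcal{L}(S)_{o}$; by (3) and (4) this reads $U_{a^{-1}a} \subseteq \mathcal{L}(S)_{o}$ and $U_{aa^{-1}} \subseteq \mathcal{L}(S)_{o}$, which holds because any filter containing an idempotent is an identity of the groupoid (it is an inverse subsemigroup) — a fact recorded in Section~4. So (5) is a corollary of (3), (4), and the identification of identity filters.
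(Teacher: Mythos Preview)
The paper does not actually prove this lemma; it simply cites \cite{Lawson2018}. Your direct verification is correct and is essentially what one would expect to find there.

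Two minor remarks. In (1) your sentence about upward closure is garbled and unnecessary: the point is simply that proper filters by definition omit $0$, so no proper filter lies in $U_{0}$. In (4), your symmetric choice $A = (\mathbf{r}(C)a)^{\uparrow}$ and $B = (b\,\mathbf{d}(C))^{\uparrow}$ does work, but the verification that $\mathbf{d}(A) = \mathbf{r}(B)$ is not entirely free --- you have to identify both with $(a^{-1}\mathbf{r}(C)a)^{\uparrow} = (b\,\mathbf{d}(C)\,b^{-1})^{\uparrow}$, and checking these coincide uses that each is the unique identity filter containing the idempotent $a^{-1}a\,bb^{-1}$ and having the right relationship to $C$. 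A slightly more economical route is to define only $B = (b\,\mathbf{d}(C))^{\uparrow}$ (noting $b^{-1}b \in \mathbf{d}(C)$ since $(ab)^{-1}(ab) \leq b^{-1}b$), observe $\mathbf{d}(B) = \mathbf{d}(C)$, and then set $A = C \cdot B^{-1}$ in the groupoid $\mathcal{L}(S)$; this forces $\mathbf{d}(A) = \mathbf{r}(B)$ and $A \cdot B = C$ automatically, and $a \in A = (Cb^{-1})^{\uparrow}$ follows from $abb^{-1} \in Cb^{-1}$ and $abb^{-1} \leq a$.
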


There is therefore an injective homomorphism $\upsilon \colon S \rightarrow \mathsf{L}(\mathcal{L}(S))$.
Let $a \in S$ and $a_{1}, \ldots, a_{m}$.
Define 
$$U_{a;a_{1}, \ldots, a_{m}} = U_{a} \cap U_{a_{1}}^{c} \cap \ldots \cap U_{a_{m}}^{c}.$$
Clearly, $U_{a:a_{1}, \ldots, a_{m}}$ is a partial bisection and so an element of $\mathsf{L}(\mathcal{L}(S))$.
The following is proved in \cite{Lawson2018}.

\begin{lemma}\label{lem:house} Let $S$ be an inverse semigroup.
\begin{enumerate}
\item $U_{a;a_{1}, \ldots, a_{m}}^{-1} = U_{a^{-1};a_{1}^{-1}, \ldots, a_{m}^{-1}}$.

\item $U_{a;a_{1}, \ldots, a_{m}} U_{b;b_{1}, \ldots, b_{n}} = U_{ab; ab_{1}, \ldots, ab_{n}, a_{1}b, \ldots, a_{m}b}$.
\end{enumerate}
\end{lemma}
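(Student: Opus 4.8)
The plan is to verify both identities directly by unwinding the definitions, relying on the corresponding identities for the $U_a$ already recorded in Lemma~\ref{lem:hill}. For item~(1), I would compute
$$U_{a;a_1,\ldots,a_m}^{-1} = \left( U_a \cap U_{a_1}^c \cap \ldots \cap U_{a_m}^c \right)^{-1}.$$
Inversion of partial bisections in $\mathsf{L}(\mathcal{L}(S))$ is given by $X \mapsto X^{-1} = \{A^{-1} \colon A \in X\}$, and the map $A \mapsto A^{-1}$ is a bijection on the groupoid $\mathcal{L}(S)$; hence inversion distributes over finite intersections and over complementation (complementation taken inside $\mathcal{L}(S)$, which is stable under $A \mapsto A^{-1}$). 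Combining this with $U_a^{-1} = U_{a^{-1}}$ from Lemma~\ref{lem:hill}(3) immediately gives
$$U_{a;a_1,\ldots,a_m}^{-1} = U_{a^{-1}} \cap U_{a_1^{-1}}^c \cap \ldots \cap U_{a_m^{-1}}^c = U_{a^{-1};a_1^{-1},\ldots,a_m^{-1}}.$$

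For item~(2), the key observation is that for a fixed $a \in S$ the operation of right multiplication by $U_a$, that is $X \mapsto X U_a$, interacts well with complementation \emph{relative to $U_a$}: for a partial bisection $X$ contained in a suitable domain one has $(U_b^c)\,U_a$ comparable to $U_a \setminus U_{ab}$, because $U_b U_a = U_{ba}$ and multiplication by the (partial-bisection) element $U_a$ is injective on filters composable with it. Concretely I would argue that, writing $U_{a;a_1,\ldots,a_m} = U_a \cap \bigcap_i U_{a_i}^c$, left multiplication by this element sends $U_{b;b_1,\ldots,b_n} = U_b \cap \bigcap_j U_{b_j}^c$ to the set of products $A \cdot B$ with $A \in U_{a;a_1,\ldots,a_m}$, $B \in U_{b;b_1,\ldots,b_n}$, $\mathbf{d}(A) = \mathbf{r}(B)$; such a product contains $ab$ (so lies in $U_{ab}$), avoids every $ab_j$ (since $B$ avoids $b_j$ and multiplication by $A$ is injective), and avoids every $a_i b$ (since $A$ avoids $a_i$). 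Conversely, any filter $C$ in $U_{ab;\,ab_1,\ldots,ab_n,\,a_1 b,\ldots,a_m b}$ factors as $A \cdot B$ with $A = (C b^{-1})^{\uparrow} \in U_a$ and $B = (a^{-1} C)^{\uparrow} \in U_b$ — using the standard filter factorisation recorded in Section~4 — and the avoidance conditions on $C$ translate back to $A$ avoiding each $a_i$ and $B$ avoiding each $b_j$. This yields
$$U_{a;a_1,\ldots,a_m}\,U_{b;b_1,\ldots,b_n} = U_{ab;\,ab_1,\ldots,ab_n,\,a_1 b,\ldots,a_m b}.$$

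The main obstacle is item~(2): one must be careful that the factorisation $C = A \cdot B$ of a filter through $a$ and $b$ is both well-defined (the filters $(C b^{-1})^{\uparrow}$ and $(a^{-1} C)^{\uparrow}$ are proper and composable) and compatible with the complement conditions, i.e. that ``$C$ does not contain $a_i b$'' is genuinely equivalent to ``$A$ does not contain $a_i$'' and not merely implied in one direction. This rests on the injectivity of multiplication by a partial bisection on the relevant class of filters, together with the identity $U_x U_y = U_{xy}$ of Lemma~\ref{lem:hill}(4); once that bookkeeping is set up the computation is routine, and indeed the statement is cited as proved in \cite{Lawson2018}.
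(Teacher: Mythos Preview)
The paper does not actually prove this lemma; it simply records that it is proved in \cite{Lawson2018}. Your direct verification via Lemma~\ref{lem:hill} is the natural route and is essentially what one finds there, so there is nothing substantive to compare. That said, two places in part~(2) deserve tightening.

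\emph{Forward inclusion.} The phrase ``multiplication by $A$ is injective'' does not by itself explain why $ab_j \in C = A \cdot B$ forces $b_j \in B$. The clean argument is: in the groupoid $\mathcal{L}(S)$ we have $B = A^{-1} \cdot C$, so $a^{-1} \in A^{-1}$ and $ab_j \in C$ give $(a^{-1}a)b_j \in B$; since $b_j \geq (a^{-1}a)b_j$ and $B$ is upward closed, $b_j \in B$. The symmetric argument handles $a_i b$.

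\emph{Reverse inclusion.} Your explicit formulas $A = (Cb^{-1})^{\uparrow}$ and $B = (a^{-1}C)^{\uparrow}$ require checking that these are proper filters, that $\mathbf{d}(A) = \mathbf{r}(B)$, and that $A \cdot B = C$; none of this is quite ``recorded in Section~4'' in that form. A quicker route avoids the bookkeeping entirely: by Lemma~\ref{lem:hill}(4) we have $U_a U_b = U_{ab}$, and since $U_a$ and $U_b$ are partial bisections (Lemma~\ref{lem:hill}(5)), every $C \in U_{ab}$ factors \emph{uniquely} as $C = A \cdot B$ with $A \in U_a$ and $B \in U_b$. Then $a_i \in A$ would give $a_i b \in AB \subseteq C$, and $b_j \in B$ would give $ab_j \in C$, both contradictions; hence $A \in U_{a;a_1,\ldots,a_m}$ and $B \in U_{b;b_1,\ldots,b_n}$ as required.
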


With this preparation out of the way,
define $\mathsf{B}(S)$ to be that subset of $\mathsf{L}(\mathcal{L}(S))$ which consists of finite compatible unions
of elements of the form $U_{a;a_{1}, \ldots, a_{m}}$.
Define $\beta \colon S \rightarrow \mathsf{B}(S)$ by $s \mapsto U_{s}$.
Then this is the Booleanization of $S$ \cite{Lawson2018}. 
If $\theta \colon S \rightarrow T$ is a homomorphism to a Boolean inverse semigroup $T$
then there is a unique morphism $\phi \colon \mathsf{B}(S) \rightarrow T$ given by 
$\phi (U_{a;a_{1}, \ldots, a_{m}}) = \theta (a) \setminus (\theta (a_{1}) \vee \ldots \vee \theta (a_{m}))$
such that $\phi \beta = \theta$.
For later reference, the topology defined on the groupoid of proper filters of $S$ using the sets of the form $U_{a;a_{1}, \ldots, a_{m}}$
is called the {\em patch topology}.\\

\noindent
{\bf Terminology. }What we call the `patch topology', this is the term used by Johnstone \cite{J}, is identical to the topology inherited from the product topology and
to what is also termed the topology of pointwise convergence (see \cite[page 174]{Paterson}).
Thus the topologies used in this paper, in \cite{Exel} and in \cite{Paterson} are identical.\\

\section{The Exel completion: proof of Theorem~\ref{them:three}}

We can now prove our first main new theorem.
The proof we shall give will be based on Section~6.
The notions of cover and cover-to-join map defined in the Introduction are central.
Let $S$ be an inverse semigroup.
From Section~6, we shall need the description of the Booleanization $\mathsf{B}(S)$.
Define $I$ to be the closure under finite compatible joins of all elements $U_{a;a_{1}, \ldots,a_{m}}$ of $\mathsf{B}(S)$ 
where $\{a_{1}, \ldots, a_{m} \} \rightarrow a$.

\begin{lemma}\label{lem:tea} The set $I$ is an additive ideal of $\mathsf{B}(S)$. 
\end{lemma}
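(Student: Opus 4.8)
The plan is to verify directly that $I$ is closed under the operations making it an additive ideal: it should absorb multiplication on both sides by arbitrary elements of $\mathsf{B}(S)$, and it should be closed under compatible joins. Closure under compatible joins holds essentially by construction, since $I$ is \emph{defined} to be the closure of the generating set under finite compatible joins (one should just check that a compatible join of two such joins is again one, which is immediate). So the substance is the ideal property, and for this I would reduce to the generators: it suffices to show that $U_{b;b_1,\ldots,b_n} \cdot U_{a;a_1,\ldots,a_m}$ and $U_{a;a_1,\ldots,a_m} \cdot U_{b;b_1,\ldots,b_n}$ lie in $I$ whenever $\{a_1,\ldots,a_m\} \to a$, because a general element of $\mathsf{B}(S)$ is a finite compatible join of terms $U_{b;b_1,\ldots,b_n}$, and multiplication distributes over compatible joins (and a product of elements of $I$ with such a join is again a compatible join, hence in $I$).

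For the computation itself I would use Lemma~\ref{lem:house}(2), which gives
$$U_{a;a_1,\ldots,a_m}\, U_{b;b_1,\ldots,b_n} = U_{ab;\, ab_1,\ldots,ab_n,\, a_1b,\ldots,a_mb}.$$
The key point is then that if $\{a_1,\ldots,a_m\} \to a$ (a cover of $a$), then $\{a_1b,\ldots,a_mb\}$ is a cover of $ab$ --- this is a standard and easy fact about covers, since covers are stable under right multiplication: given $0 \neq x \le ab$, one has $xb^{-1}b = x$ (as $\mathbf{d}(x) \le \mathbf{d}(ab) = b^{-1}\mathbf{d}(a)b \le b^{-1}b$) and $x b^{-1} \le a$ with $xb^{-1}\neq 0$... more simply, if $\{a_i\}\to a$ then for $0\neq y \le a$ some $y\wedge a_i \neq 0$, and applying this after pushing $x$ up to $a$ via right multiplication by $b^{-1}$ gives the claim. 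Hence the larger constraint set $\{ab_1,\ldots,ab_n,a_1b,\ldots,a_mb\}$ contains a cover of $ab$, and since adding extra excluded elements to a $U_{a;\ldots}$ only shrinks it below an element of the generating form of $I$, the product is $\le$ a generator of $I$; as $I$ is a (downward-closed, being an ideal) additive ideal generated by those elements, the product lies in $I$. For the other side, $U_{b;b_1,\ldots,b_n}\, U_{a;a_1,\ldots,a_m} = U_{ba;\, ba_1,\ldots,ba_m,\, b_1a,\ldots,b_na}$, and now $\{ba_1,\ldots,ba_m\}$ is a cover of $ba$ by the analogous stability of covers under left multiplication.

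The main obstacle --- really the only non-bookkeeping point --- is establishing that covers are preserved under one-sided multiplication, i.e. $\{a_1,\ldots,a_m\}\to a$ implies $\{a_ib\}\to ab$ and $\{ba_i\}\to ba$, and confirming that the relevant $U$-sets are genuinely $\le$ (as subsets, i.e. $\subseteq$) a term $U_{ab;\,a_1b,\ldots,a_mb}$ with $\{a_ib\}\to ab$, so that downward closure of the ideal finishes the job. One must also be slightly careful that the products actually land \emph{inside} $\mathsf{B}(S)$ and that the "extra constraints shrink the set" observation is used correctly: $U_{c;\,c_1,\ldots,c_k,\,d_1,\ldots,d_l} = U_{c;c_1,\ldots,c_k}\cap U_{d_1}^c\cap\cdots\cap U_{d_l}^c \subseteq U_{c;c_1,\ldots,c_k}$, so if the sub-list $\{c_1,\ldots,c_k\}$ already covers $c$ then $U_{c;c_1,\ldots,c_k}\in I$ and the whole product is below it, hence in $I$ by the ideal (downward-closure) property. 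Everything else is routine distributivity and the definition of $I$.
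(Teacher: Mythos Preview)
Your approach is essentially the same as the paper's: reduce to generators, use Lemma~\ref{lem:house}(2) to compute the product, and show that right-multiplying a cover by $b$ gives a cover of $ab$. The paper's verification of $\{a_{i}b\}\to ab$ is exactly the argument you sketch.

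There is, however, one genuine slip. You conclude by saying the product $U_{ab;\,ab_{1},\ldots,ab_{n},\,a_{1}b,\ldots,a_{m}b}$ lies \emph{below} the generator $U_{ab;\,a_{1}b,\ldots,a_{m}b}$ and then invoke ``downward closure of $I$ (since it is an ideal)''. But that is precisely what you are trying to prove: $I$ is only \emph{defined} as the $\vee$-closure of the generating set, and $X^{\vee}$ need not be an order ideal in general. So this step is circular.

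The fix is painless and in fact simplifies your argument: since all of $ab_{1},\ldots,ab_{n},a_{1}b,\ldots,a_{m}b$ lie in $(ab)^{\downarrow}$, and you have already shown that the sub-list $\{a_{1}b,\ldots,a_{m}b\}$ covers $ab$, the \emph{full} list is also a cover of $ab$ (enlarging a cover within the principal order ideal trivially preserves the cover property). Hence the product is \emph{itself} a generator of $I$, not merely below one, and no appeal to downward closure is needed. This is exactly how the paper phrases it: it asserts directly that $\{ab_{1},\ldots,ab_{n},a_{1}b,\ldots,a_{m}b\}\to ab$, and its proof of that only ever uses the $a_{i}b$'s, just as yours does.
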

\begin{proof} By symmetry, it is enough to prove that if $U_{a;a_{1}, \ldots,a_{m}}$ is such that 
$$\{a_{1}, \ldots, a_{m} \} \rightarrow a$$
and $U_{b;b_{1}, \ldots, b_{n}}$ is any element then $U_{a;a_{1}, \ldots,a_{m}}U_{b;b_{1}, \ldots, b_{n}} \in I$.
By Lemma~\ref{lem:house}, we have that
$$U_{a;a_{1}, \ldots,a_{m}}U_{b;b_{1}, \ldots, b_{n}} = U_{ab;ab_{1}, \ldots, ab_{n}, a_{1}b, \ldots, a_{m}b}.$$
We prove that $\{ab_{1}, \ldots, ab_{n}, a_{1}b, \ldots, a_{m}b\} \rightarrow ab$.
Let $0 < x \leq ab$.
Then $xb^{-1}b = x$ and so, in particular, $xb^{-1} \neq 0$.
Thus $0 \neq xb^{-1} \leq abb^{-1} \leq a$.
It follows that there is $0 \neq y \leq xb^{-1}, a_{i}$ for some $i$.
In particular, $y = ybb^{-1}$ and so $yb \neq 0$.
Hence $0 \neq yb \leq x, a_{i}b$.
\end{proof}

By Lemma~\ref{lem:tea}, we may therefore form the quotient Boolean inverse semigroup $\mathsf{B}(S)/I = \mathsf{B}(S)/\varepsilon_{I}$.
Denote the elements of $\mathsf{B}(S)/I$ as elements of $\mathsf{B}(S)$ enclosed in square brackets.
Denote by $\nu \colon \mathsf{B}(S) \rightarrow \mathsf{B}(S)/I$ the natural morphism.
Put $\mathsf{B}(S)/I = \mathsf{T}(S)$, a Boolean inverse semigroup of course, and $\tau = \nu \beta$.
We prove that $\tau \colon S \rightarrow \mathsf{T}(S)$ is universal for cover-to-join maps from $S$ to Boolean inverse semigroups.
To do this, observe that the operations in $\mathsf{B}(S)$ are set-theoretic.
It follows that if $a_{1}, \ldots, a_{m} \leq a$ then
$$U_{a;a_{1}, \ldots, a_{m}} = U_{a} \setminus \left( U_{a_{1}} \cup \ldots \cup U_{a_{m}} \right).$$
The natural map $\nu$ is a morphism of Boolean inverse semigroups and so we have that
$$[U_{a;a_{1}, \ldots, a_{m}}] = [U_{a}] \setminus \left( [U_{a_{1}}] \cup \ldots \cup [U_{a_{m}}] \right).$$
We prove first that $\tau$ is itself a cover-to-join map.
Suppose that $\{a_{1}, \ldots, a_{m} \} \rightarrow a$.
Then, by definition $[U_{a;a_{1}, \ldots, a_{m}}] = 0$.
It follows that $[U_{a}] = [U_{a_{1}}] \vee \ldots \vee [U_{a_{m}}]$. 
Next, let $\theta \colon S \rightarrow T$ be a cover-to-join map where $T$ is Boolean.
Then by Theorem~\ref{them:one} and Section~6, the Booleanization theorem, there is a unique morphism of Boolean inverse semigroups $\phi \colon \mathsf{B}(S) \rightarrow T$
such that $\phi \beta = \theta$ and given by $\phi (U_{a:a_{1}, \ldots,a_{m}}) = \theta (a) \setminus (\theta (a_{1}) \vee \ldots \vee \theta (a_{m}))$.
However, $\phi$ is a cover-to-join map and so 
if $\{a_{1}, \ldots, a_{m} \} \rightarrow a$ then $\phi (U_{a;a_{1}, \ldots,a_{m}}) = 0$.
Clearly, $I \subseteq \mbox{ker}(\phi)$. 
Thus there is a unique morphism $\psi \colon \mathsf{B}(S)/I \rightarrow T$ such that $\psi \nu = \phi$.
We therefore have that $\psi \tau = \theta$.
It remains to show that $\psi \colon \mathsf{T}(S) \rightarrow T$ is the unique
morphism such that $\psi \tau = \theta$.
Observe that any morphism $\psi'$ such that  $\psi' \tau = \theta$ must map $[U_{a}]$ to $\theta (a)$.
The result now follows by observing that $\psi'$ is a morphism and so is a morphism of unital Boolean algebras
when restricted to the principal order ideal generated by $[U_{a}]$.
It follows that $\psi' ([U_{a;a_{1},\ldots, a_{m}}]) = \theta (a) \setminus (\theta (a_{1}) \vee \ldots \vee \theta (a_{m}))$.\\

This concludes the proof of Theorem~\ref{them:three}.

\section{Tight filters}

The material in this section is due to Exel \cite{Exel} with some ideas from \cite{LL}.
We begin with some well-known results on ultrafilters.
The following is proved using the same ideas as in \cite[Proposition~2.13]{Law3}.

\begin{lemma}\label{lem:lettuce} Let $S$ be an inverse semigroup
and let $A$ be a proper filter in $S$.
Then the following are equivalent:
\begin{enumerate}

\item $A$ is an ultrafilter.

\item $\mathbf{d}(A)$ is an ultrafilter.

\item $\mathbf{r}(A)$ is an ultrafilter

\end{enumerate}
\end{lemma}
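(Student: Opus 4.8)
The plan is to prove the equivalence $(1)\Leftrightarrow(2)$ directly and then deduce $(1)\Leftrightarrow(3)$ for free. Indeed, by the remarks of Section~4 the map $X\mapsto X^{-1}$ is an inclusion-preserving bijection on the set of proper filters of $S$ (using also $0^{-1}=0$), so it carries ultrafilters to ultrafilters; since $\mathbf{r}(A)=\mathbf{d}(A^{-1})$, applying $(1)\Leftrightarrow(2)$ to $A^{-1}$ yields $(1)\Leftrightarrow(3)$. This follows the pattern of \cite[Proposition~2.13]{Law3}.

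For $(2)\Rightarrow(1)$, which is the easy direction: suppose $\mathbf{d}(A)$ is an ultrafilter and let $B$ be a proper filter with $A\subseteq B$. Then $A^{-1}A\subseteq B^{-1}B$, hence $\mathbf{d}(A)\subseteq\mathbf{d}(B)$, and $\mathbf{d}(B)$ is proper because $B$ is; maximality of $\mathbf{d}(A)$ forces $\mathbf{d}(A)=\mathbf{d}(B)$. Choosing any $a\in A\subseteq B$ and using the identities $A=(a\,\mathbf{d}(A))^{\uparrow}$ and $B=(a\,\mathbf{d}(B))^{\uparrow}$ recorded in Section~4, we get $A=B$, so $A$ is an ultrafilter.

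For $(1)\Rightarrow(2)$, the substantive direction: suppose $A$ is an ultrafilter and let $F$ be a proper filter with $\mathbf{d}(A)\subseteq F$; the goal is $F=\mathbf{d}(A)$. Fix $a\in A$, so $a^{-1}a\in\mathbf{d}(A)\subseteq F$, and set $B=(aF)^{\uparrow}$. I would check: (a) $B$ is a filter (upward closed by definition, and given $af_1,af_2\in aF$ pick $f_3\in F$ below $f_1$ and $f_2$, so $af_3\le af_1,af_2$); (b) $B$ is proper (if $af=0$ for some $f\in F$, pick $g\in F$ with $g\le f$ and $g\le a^{-1}a$; from $g\le f$ one gets $ag=0$, and then $g=(a^{-1}a)g=a^{-1}(ag)=0$, contradicting properness of $F$); (c) $A=(a\,\mathbf{d}(A))^{\uparrow}\subseteq(aF)^{\uparrow}=B$, so since $A$ is an ultrafilter and $B$ is proper, $A=B$. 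Then to show $F\subseteq\mathbf{d}(A)$, take $f\in F$, pick $g\in F$ with $g\le f$ and $g\le a^{-1}a$; now $ag\in aF\subseteq B=A$ and $a\in A$, so $(ag)^{-1}a\in A^{-1}A\subseteq\mathbf{d}(A)$, while $(ag)^{-1}a=g^{-1}(a^{-1}a)=g^{-1}$ because $g^{-1}\le a^{-1}a$. Thus $g^{-1}\in\mathbf{d}(A)$, and since $\mathbf{d}(A)$ is an inverse subsemigroup (it is an identity filter, containing $a^{-1}a$) we get $g\in\mathbf{d}(A)$, hence $f\in\mathbf{d}(A)$ by upward closure. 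Therefore $F\subseteq\mathbf{d}(A)\subseteq F$, and $\mathbf{d}(A)$ is an ultrafilter.

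The main point requiring care is that in $(1)\Rightarrow(2)$ the filter $F$ need not contain an idempotent, so it must not be treated as an idempotent filter; the argument uses only the general filter axioms for $F$, the membership $a^{-1}a\in F$, and the fact that it is $\mathbf{d}(A)$ — not $F$ — that is self-inverse. Everything else is routine order-arithmetic in inverse semigroups: $g\le e$ with $e$ idempotent gives $g=eg=ge$, and $g\le f$ gives $g=f\,g^{-1}g$.
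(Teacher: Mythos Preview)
Your proof is correct and follows the same approach as the paper, which simply refers the reader to \cite[Proposition~2.13]{Law3}; you have spelled out that argument carefully and accurately.

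One small correction to your closing commentary (not to the proof itself): you write that in $(1)\Rightarrow(2)$ the filter $F$ ``need not contain an idempotent''. In fact it must, since $a^{-1}a\in\mathbf{d}(A)\subseteq F$; hence $F$ is already an identity filter and so an inverse subsemigroup by the remark in Section~4. This does not affect any step of your argument --- you only ever use that $\mathbf{d}(A)$ is self-inverse, which is what is needed --- but the caveat as stated is inaccurate.
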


Likewise, the following is proved using the same ideas as in \cite[Proposition~2.13]{Law3}.

\begin{lemma}\label{lem:carrot} Let $S$ be an inverse semigroup.
Then there is a bijection between the set of idempotent ultrafilters in $S$
and the set of ultrafilters in the meet-semilattice $\mathsf{E}(S)$.
In particular, the bijection is given by the following two maps: 
if $A$ is an idempotent ultrafilter in $S$ then $A \cap \mathsf{E}(S)$ is an ultrafilter in
$\mathsf{E}(S)$; 
if $F$ is an ultrafilter in $\mathsf{E}(S)$ then $F^{\uparrow}$ is an idempotent ultrafilter in $S$. 
\end{lemma}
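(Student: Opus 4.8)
The plan is to check that the two displayed maps are well defined and mutually inverse, with everything resting on one structural observation: every idempotent filter $A$ of $S$ is recovered from its idempotents as $A = (A \cap \mathsf{E}(S))^{\uparrow}$. So I would prove this first. Let $A$ be a filter containing an idempotent $e$ and let $a \in A$; since $A$ is downward directed there is $c \in A$ with $c \le a$ and $c \le e$. Now an element below an idempotent is an idempotent: from $c \le e$ we get $c = e c^{-1}c$, while $c^{-1}c \le e^{-1}e = e$ gives $e c^{-1}c = c^{-1}c$, so $c = c^{-1}c \in \mathsf{E}(S)$. Thus $c \in A \cap \mathsf{E}(S)$ and $c \le a$, so $a \in (A \cap \mathsf{E}(S))^{\uparrow}$; the reverse inclusion is immediate since $A = A^{\uparrow}$.

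Next I would run the two maps. Given an idempotent ultrafilter $A$, put $F = A \cap \mathsf{E}(S)$. That $F$ is a proper filter of the semilattice $\mathsf{E}(S)$ is routine: it is nonempty and misses $0$ because $A$ does; it is upward closed in $\mathsf{E}(S)$ because $A = A^{\uparrow}$; and it is downward directed because a common lower bound in $A$ of two idempotents of $A$ lies below an idempotent and so, by the observation, is itself an idempotent of $A$. For maximality, if $G$ is a proper filter of $\mathsf{E}(S)$ with $F \subseteq G$, then its upward closure $G^{\uparrow}$ in $S$ is a proper filter of $S$ --- upward closed by construction, downward directed because $G$ is, and proper since $0 \notin G$ --- and $G^{\uparrow} \supseteq F^{\uparrow} = A$; maximality of $A$ forces $G^{\uparrow} = A$, whence $G \subseteq G^{\uparrow} \cap \mathsf{E}(S) = A \cap \mathsf{E}(S) = F$, so $G = F$.

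Conversely, given an ultrafilter $F$ of $\mathsf{E}(S)$, the set $F^{\uparrow}$ is a proper filter of $S$ by the same three checks, and it contains idempotents, so it is an idempotent filter. If $B \supseteq F^{\uparrow}$ is a proper filter of $S$, then $B$ too contains an idempotent, so by the observation $B = (B \cap \mathsf{E}(S))^{\uparrow}$; moreover $B \cap \mathsf{E}(S)$ is a proper filter of $\mathsf{E}(S)$ (the argument just given used only that $B$ is an idempotent filter) containing $F^{\uparrow} \cap \mathsf{E}(S) = F$, hence equals $F$, so $B = F^{\uparrow}$ and $F^{\uparrow}$ is an ultrafilter. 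Finally the composites are identities: $(A \cap \mathsf{E}(S))^{\uparrow} = A$ is exactly the structural observation, and $F^{\uparrow} \cap \mathsf{E}(S) = F$ holds because $F$ is upward closed in $\mathsf{E}(S)$.

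I do not expect a genuine obstacle; the one point deserving care is the structural observation, and in particular the remark that an element below an idempotent is an idempotent. One should resist the temptation to argue instead that the elements of an idempotent filter are pairwise compatible, since that is \emph{false} in general (elements of a filter merely share lower bounds, which is strictly weaker than lying below a common element). This is essentially the line of argument of \cite[Proposition~2.13]{Law3}, and it runs parallel to Lemma~\ref{lem:lettuce}.
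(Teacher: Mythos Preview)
Your proof is correct and complete. The paper does not actually give its own proof of this lemma; it simply says the result ``is proved using the same ideas as in \cite[Proposition~2.13]{Law3}'', and your argument is precisely the natural one along those lines: reduce everything to the structural identity $A = (A \cap \mathsf{E}(S))^{\uparrow}$ for idempotent filters, then verify the two maps are well defined and mutually inverse. Your side remark about not assuming pairwise compatibility is well taken and consistent with the paper's observation (Section~4) that idempotent filters are exactly the filters which are inverse subsemigroups---that is a strictly weaker condition than having all elements pairwise compatible.
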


The following is a simple consequence of Zorn's lemma.

\begin{lemma}\label{lem:hedgehog} Let $S$ be an inverse semigroup.
Then each non-zero element of $S$ is contained in an ultrafilter.
\end{lemma}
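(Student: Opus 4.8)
The plan is to produce the ultrafilter by a direct application of Zorn's lemma to the poset of proper filters containing a suitable principal filter. Fix a non-zero element $s \in S$ and consider $s^{\uparrow} = \{t \in S \colon s \leq t\}$. This is a filter: it is clearly an up-set, and if $a,b \in s^{\uparrow}$ then $s \leq a,b$, so $s$ itself witnesses the downward-directedness. It is proper because $0 \in s^{\uparrow}$ would force $s \leq 0$, hence $s = 0$, contrary to hypothesis.

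Next I would let $\mathcal{P}$ be the set of all proper filters of $S$ that contain $s^{\uparrow}$, ordered by inclusion; it is non-empty since $s^{\uparrow} \in \mathcal{P}$. To invoke Zorn's lemma I would check that every chain $\mathcal{C} \subseteq \mathcal{P}$ has an upper bound in $\mathcal{P}$, namely $A = \bigcup \mathcal{C}$. That $A$ is an up-set is immediate; downward directedness follows because any two elements of $A$ lie in a common member of the chain (here the chain condition is used), which supplies the required lower bound; and $A$ is proper because no member of $\mathcal{C}$ contains $0$, so neither does their union. Since also $s^{\uparrow} \subseteq A$, we have $A \in \mathcal{P}$, and $A$ is an upper bound for $\mathcal{C}$.

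By Zorn's lemma $\mathcal{P}$ has a maximal element $M$. I would then argue that $M$ is in fact an ultrafilter of $S$, i.e.\ maximal among \emph{all} proper filters: if $N$ is a proper filter with $M \subseteq N$, then $s^{\uparrow} \subseteq M \subseteq N$, so $N \in \mathcal{P}$, and maximality of $M$ in $\mathcal{P}$ forces $N = M$. Since $s \in s^{\uparrow} \subseteq M$, the element $s$ is contained in the ultrafilter $M$, as required.

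\textbf{Main obstacle.} There is essentially no obstacle here; the only points requiring a moment's care are the two small verifications flagged above --- that the union of a chain of proper filters is again a \emph{proper} filter (so the upper bound stays inside $\mathcal{P}$), and that maximality within the subposet $\mathcal{P}$ upgrades to maximality among all proper filters. Both are routine.
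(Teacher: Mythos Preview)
Your argument is correct and is precisely the standard application of Zorn's lemma that the paper has in mind; indeed the paper gives no detailed proof at all, merely noting that the lemma ``is a simple consequence of Zorn's lemma.'' Your write-up supplies exactly those routine details.
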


A very useful result in working with ultrafilters is the following \cite[Lemma~12.3]{Exel}.

\begin{lemma}\label{lem:exel} Let $E$ be a meet semilattice with zero.
A proper filter $A$ in $E$ is an ultrafilter if and only if 
$e \in E$ such that $e \wedge a \neq 0$ for all $a \in A$ implies that $e \in A$.
\end{lemma}

Let $S$ be an arbitrary inverse semigroup.
Associated with $S$ is its Booleanization $\mathsf{B}(S)$.
The Stone groupoid of $\mathsf{B}(S)$ is Paterson's universal groupoid $\mathsf{G}_{u}(S)$
which consists of the groupoid of proper filters of $S$ equipped with the patch topology.\\

\noindent
{\bf Definition. }The space of identities of  $\mathsf{G}_{u}(S)$ is denoted by $\mathsf{X}(S)$.
It is simply the set of all proper filters of $\mathsf{E}(S)$ equipped with the patch topology.\\

\noindent
{\bf Definition. }The {\em Cuntz-Krieger boundary of $S$}, denoted by $\partial S$, is the closure of the set
of ultrafilters in  $\mathsf{X}(S)$.\\ 

We shall now characterize the elements of $\partial S$ in algebraic terms.
A proper filter $A$ of $S$ (we reiterate that $S$ is an inverse semigroup, we do not assume that it is a monoid) is said to be {\em tight} if $a \in A$
and $C \rightarrow a$ implies that $C \cap A \neq \varnothing$.

\begin{remark}{\em The reader is alerted to the fact that our use of the word `tight' is a slight restriction of
the way it is used in \cite{Exel}. The salient point is that Exel wishes to work in an environment where he can be neutral as to whether his
semigroups have an identity or not. In addition, he only works with unital Boolean algebras (in our terminology).
Nevetheless, Exel's tight groupoid and ours are the same.}
\end{remark}

\begin{remark}
{\em To provide some further context: the relationship between covers and tight filters is analogous to the relationship
between joins and prime filters.}
\end{remark}

The following result was first proved in \cite{Exel} where the closure of the set of ultrafilters was characterized in terms of tight filters;
it is also implicit in the work of \cite{Lenz} but there conditions are sought to ensure that the set of ultrafilters is already closed.
 
\begin{lemma}\label{lem:pink} Let $S$ be an inverse semigroup (we reiterate, that we do not assume that $S$ is a monoid).
\begin{enumerate}

\item Every ultrafilter in $\mathsf{E}(S)$ is tight.

\item Every open set containing a tight filter contains an ultrafilter.

\item The set of tight filters in $\mathsf{E}(S)$ is a closed subspace of $\mathsf{X}(S)$.

\item The set of tight filters in $\mathsf{E}(S)$ is the closure in  $\mathsf{X}(S)$ of the set of ultrafilters.

\end{enumerate}
\end{lemma}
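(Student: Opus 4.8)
The plan is to prove the four items in a logical order that makes each one feed the next, relying throughout on Exel's characterization of ultrafilters in a meet semilattice (Lemma~\ref{lem:exel}) and on the fact that the topology on $\mathsf{X}(S)$ is the patch topology, so that a basis of compact-open sets is given by sets of the form $U_{e;e_{1},\ldots,e_{m}} \cap \mathsf{X}(S)$ with $e,e_{1},\ldots,e_{m}$ idempotents and $e_{1},\ldots,e_{m}\leq e$. First I would prove (1): let $F$ be an ultrafilter in $\mathsf{E}(S)$, let $e\in F$ and let $C=\{e_{1},\ldots,e_{m}\}$ be a cover of $e$; I must show $C\cap F\neq\varnothing$. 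Suppose not, so each $e_{i}\notin F$. By Lemma~\ref{lem:exel} applied to each $e_{i}$, for each $i$ there is $f_{i}\in F$ with $e_{i}\wedge f_{i}=0$; taking $f=e\wedge f_{1}\wedge\cdots\wedge f_{m}\in F$ we get $0\neq f\leq e$ with $f\wedge e_{i}=0$ for all $i$, contradicting that $C$ is a cover of $e$. Hence $F$ is tight.

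Next I would establish (2), which is the technical heart of the lemma. Let $A$ be a tight filter and let $O$ be an open set containing $A$; shrinking $O$, I may assume $O=U_{e;e_{1},\ldots,e_{m}}\cap\mathsf{X}(S)$ with $e\in A$, $e_{i}\leq e$ and $e_{i}\notin A$. The key claim is that the set $E'=\{g\in\mathsf{E}(S): g\leq e,\ g\wedge e_{i}=0 \text{ for all } i\}$ contains a non-zero element; for then any ultrafilter in $\mathsf{E}(S)$ containing such a $g$ (which exists by Lemma~\ref{lem:hedgehog}) lies in $O$. To prove the claim, suppose $E'=\{0\}$, i.e.\ every $0\neq g\leq e$ meets some $e_{i}$; that says precisely $\{e_{1},\ldots,e_{m}\}$ is a cover of $e$. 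Since $e\in A$ and $A$ is tight, some $e_{i}\in A$, contradicting $e_{i}\notin A$. This proves (2).

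Then (3) follows from (1) and (2): I would show the complement of the tight filters is open. If $A$ is a proper filter in $\mathsf{E}(S)$ that is \emph{not} tight, there are $e\in A$ and a cover $C=\{e_{1},\ldots,e_{m}\}$ of $e$ with $C\cap A=\varnothing$; then $A\in U_{e;e_{1},\ldots,e_{m}}\cap\mathsf{X}(S)=:O$, and I claim $O$ contains no tight filter. Indeed, if $B\in O$ were tight then $e\in B$ and $C\rightarrow e$ would force some $e_{i}\in B$, contradicting $B\in U_{e_{i}}^{c}$; so $O$ is an open neighbourhood of $A$ inside the non-tight set, proving the tight filters form a closed set. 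Finally for (4): by (1) the set $\mathsf{T}$ of tight filters contains all ultrafilters, and by (3) it is closed, so $\partial S=\overline{(\text{ultrafilters})}\subseteq\mathsf{T}$; conversely (2) says every tight filter lies in the closure of the ultrafilters, i.e.\ $\mathsf{T}\subseteq\partial S$, giving equality.

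The step I expect to be the main obstacle is (2), and within it the passage from ``every non-zero $g\leq e$ meets some $e_{i}$'' to ``$\{e_{1},\ldots,e_{m}\}$ is a cover of $e$'': one must be careful that the $e_{i}$ genuinely lie in the principal order ideal of $e$ (which is why the reduction to basic open sets with $e_{i}\leq e$ is made at the outset) and that the notion of cover used here is exactly the one from the Introduction, restricted to idempotents in $\mathsf{E}(S)$. Everything else is a routine manipulation of filters together with Lemma~\ref{lem:exel} and Lemma~\ref{lem:hedgehog}.
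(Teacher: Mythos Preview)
Your proof is correct and follows essentially the same approach as the paper's: part (1) via Lemma~\ref{lem:exel} and a meet of witnesses, part (2) by observing that the basic open set $U_{e;e_{1},\ldots,e_{m}}$ forces $\{e_{1},\ldots,e_{m}\}$ \emph{not} to be a cover, part (3) by showing a non-tight filter has a basic neighbourhood containing no tight filters, and part (4) by combining (1)--(3). One small expository slip: you write that ``(3) follows from (1) and (2)'', but the argument you actually give for (3) is self-contained and uses neither; this matches the paper, where (3) is likewise proved directly.
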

\begin{proof} (1) Let $A$ be an ultrafilter.
Suppose that it is not tight.
Then there is an element $a \in A$ and a cover $C \rightarrow a$ such that $C \cap A = \varnothing$;
that is, no element of $C$ belongs to $A$.
It follows by Lemma~\ref{lem:exel}, 
that for each $c_{i} \in C$, there is $a_{i} \in A$ such that $c_{i} \wedge a_{i} = 0$.
Since $a_{1}, \ldots, a_{m} \in A$ it follows that $e = a_{1} \wedge \ldots \wedge a_{m} \in A$.
Now, also, $a \in A$ and so $a \wedge e \neq 0$.
In particular, $a \wedge e \leq a$.
It follows that $c_{i} \wedge a \wedge e \neq 0$ for some $c_{i}$.
But $c_{i} \wedge e = 0$, which is a contradiction.

(2) Let $A$ be a tight filter. 
We prove that every open set containing $A$ contains an ultrafilter.
Let $A \in U_{a:a_{1},\ldots,a_{m}}$.
Since $A$ is tight, it cannot be that $\{a_{1},\ldots,a_{m}\}$ is a cover of $a$.
Thus there is a non-zero element $x \leq a$ such that $x \wedge a_{i} = 0$ for $1 \leq i \leq m$.
By Lemma~\ref{lem:hedgehog}, let $F$ be an ultrafilter that contains $x$.
Then it clearly cannot contain any of the elements $a_{1}, \ldots, a_{m}$.
We have therefore proved that $F \in U_{a:a_{1},\ldots,a_{m}}$.

(3) Let $A$ be an element of $\mathsf{X}(S)$ with the property that every open set containing $A$ contains a tight filter.
We prove that $A$ is also a tight filter.
Suppose not.
Then there is an element $a \in A$ and a cover $C = \{c_{1}, \ldots, c_{m}\} \rightarrow a$ such that $A \cap C = \varnothing$.
It follows that $A \in U_{a; c_{1}, \ldots, c_{m}}$.
However, the open set $U_{a; c_{1}, \ldots, c_{m}}$ contains no tight filters (since it is not possible for a tight filter to contain $a$ but omit all the elements $c_{1}, \ldots, c_{m}$) 
but does contain $A$, which contradicts our assumption on $A$.

(4) Let $A$ be a filter such that every open set containing $A$ contains an ultrafilter.
Then, by part (1), it is certainly the case that every open set containing $A$ contains a tight filter.
It follows by part (3), that $S$ is itself a tight filter.
\end{proof}

The following is proved as \cite[Lemma 5.9]{LL}.

\begin{lemma}\label{lem:crystal} Let $S$ be an inverse semigroup
and let $A$ be a proper filter in $S$.
Then the following are equivalent:
\begin{enumerate}

\item $A$ is a tight filter.

\item $\mathbf{d}(A)$ is a tight filter.

\item $\mathbf{r}(A)$ is a tight filter.
\end{enumerate}
\end{lemma}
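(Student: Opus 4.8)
The plan is to establish the single equivalence (1)$\Leftrightarrow$(2) and then obtain (1)$\Leftrightarrow$(3) by a duality argument. For the duality, I would first record the elementary fact that a proper filter $A$ is tight if and only if $A^{-1}$ is tight: inversion is an order-automorphism of $S$ that preserves meets, so $\{c_{1}, \ldots, c_{m}\} \rightarrow a$ if and only if $\{c_{1}^{-1}, \ldots, c_{m}^{-1}\} \rightarrow a^{-1}$, while $c \in A \Leftrightarrow c^{-1} \in A^{-1}$. Since $\mathbf{r}(A) = \mathbf{d}(A^{-1})$, applying (1)$\Leftrightarrow$(2) to $A^{-1}$ then gives that $A$ is tight if and only if $\mathbf{r}(A)$ is tight.

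The heart of the matter is a transport principle for covers between an element $a \in A$ and its domain idempotent $\mathbf{d}(a)$. I would prove two facts, using throughout that the elements of a principal order ideal are compatible so that the meets below are defined (\cite[Lemma~1.4.11]{Law1}), that $x \leq a$ with $x \neq 0$ forces $\mathbf{d}(x) \neq 0$, and that $z \mapsto \mathbf{d}(z)$ and $g \mapsto ag$ are mutually inverse, meet-preserving order bijections between $a^{\downarrow}$ and $\mathbf{d}(a)^{\downarrow}$. First: if $\{c_{i}\} \rightarrow a$ with each $c_{i} \leq a$, then $\{\mathbf{d}(c_{i})\} \rightarrow \mathbf{d}(a)$ (given $0 \neq g \leq \mathbf{d}(a)$, the element $ag$ is a nonzero element of $a^{\downarrow}$, so it meets some $c_{i}$, and the domain of that meet witnesses $g \wedge \mathbf{d}(c_{i}) \neq 0$). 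Second: if $\{g_{i}\} \rightarrow \mathbf{d}(a)$ with each $g_{i} \leq \mathbf{d}(a)$, then $\{ag_{i}\} \rightarrow a$ (given $0 \neq x \leq a$, one has $x = a\mathbf{d}(x)$ and $\mathbf{d}(x)$ meets some $g_{i}$, and a short computation gives $x \wedge ag_{i} = a(\mathbf{d}(x) \wedge g_{i}) \neq 0$). I would also need the trivial observation that covers restrict downwards: if $C \rightarrow e$ and $f \leq e$ then $\{c \wedge f : c \in C\} \rightarrow f$.

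Granting these, both implications are short. For (1)$\Rightarrow$(2): suppose $A$ is tight, let $e \in \mathbf{d}(A)$, and let $C \rightarrow e$. Choose $a \in A$ with $\mathbf{d}(a) \leq e$ (possible since $\mathbf{d}(A) = (A^{-1}A)^{\uparrow}$, so some $u^{-1}v \leq e$ with $u,v \in A$, and any $c \in A$ below $u$ and $v$ has $\mathbf{d}(c) \leq u^{-1}v \leq e$). Restricting $C$ to $\mathbf{d}(a)$ gives a cover $\{g_{i}\}$ of $\mathbf{d}(a)$ with $g_{i} = c_{i} \wedge \mathbf{d}(a)$; by the second transport fact $\{ag_{i}\} \rightarrow a$, so tightness of $A$ yields $ag_{i} \in A$ for some $i$; then $\mathbf{d}(ag_{i}) = g_{i} \in \mathbf{d}(A)$ and $g_{i} \leq c_{i}$, so $c_{i} \in \mathbf{d}(A)$ by upward closure, whence $C \cap \mathbf{d}(A) \neq \varnothing$. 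For (2)$\Rightarrow$(1): suppose $\mathbf{d}(A)$ is tight, let $a \in A$ and $C = \{c_{i}\} \rightarrow a$. By the first transport fact $\{\mathbf{d}(c_{i})\} \rightarrow \mathbf{d}(a)$, and $\mathbf{d}(a) \in \mathbf{d}(A)$, so $\mathbf{d}(c_{i}) \in \mathbf{d}(A)$ for some $i$; then $c_{i} = a\mathbf{d}(c_{i}) \in (A\mathbf{d}(A))^{\uparrow} = A$, using the identity $A = (a\mathbf{d}(A))^{\uparrow}$ recorded in Section~4, so $C \cap A \neq \varnothing$.

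I expect the only real work to be the bookkeeping in the transport principle: one must check that each invoked meet genuinely exists (hence the principal-order-ideal compatibility), that left multiplication by $a$ commutes with the relevant meets of idempotents below $\mathbf{d}(a)$, and --- the one genuinely non-formal point --- that one may first shrink a cover of an arbitrary $e \in \mathbf{d}(A)$, which need not be idempotent, down to a cover of $\mathbf{d}(a)$ before transporting it along $a$. Once those are in place the argument is purely mechanical.
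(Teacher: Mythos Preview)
Your argument is correct. The transport principle via the order isomorphism $a^{\downarrow} \cong \mathbf{d}(a)^{\downarrow}$ (which the paper itself invokes in Section~11 for Lemma~\ref{lem:idpt}), together with the restriction of covers along $\mathbf{d}(a) \leq e$, does exactly what is needed, and the duality reduction of (1)$\Leftrightarrow$(3) to (1)$\Leftrightarrow$(2) via $A \mapsto A^{-1}$ is clean. The one point you flagged as ``non-formal'' --- shrinking a cover of an arbitrary $e \in \mathbf{d}(A)$ down to $\mathbf{d}(a)$ --- is handled correctly: the meets $c_{i} \wedge \mathbf{d}(a)$ exist because $c_{i}, \mathbf{d}(a) \in e^{\downarrow}$ are compatible, and they are idempotents since they lie below the idempotent $\mathbf{d}(a)$.

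As for comparison: the paper does not supply its own proof of this lemma but simply cites \cite[Lemma~5.9]{LL}. Your argument is essentially the proof one finds there, and indeed the natural one --- covers are an order-theoretic notion living in principal order ideals, so the order isomorphism $a^{\downarrow} \cong \mathbf{d}(a)^{\downarrow}$ is precisely the tool that moves them back and forth. There is nothing to add.
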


The following is now immediate.

\begin{corollary}\label{cor:quisling} 
The Cuntz-Krieger boundary is a closed, invariant subspace of the space of identities of the universal groupoid.
\end{corollary}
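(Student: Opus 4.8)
The plan is to read off both properties from the results already established in this section. Throughout, write $G = \mathsf{G}_{u}(S)$, so that $G_{o} = \mathsf{X}(S)$ is the space of all proper filters of $\mathsf{E}(S)$ and, by definition, $\partial S \subseteq G_{o}$ is the closure of the set of ultrafilters in $\mathsf{X}(S)$. Closedness is then immediate, since a closure is a closed set; equivalently, by Lemma~\ref{lem:pink}(3) and (4), $\partial S$ is precisely the set of tight filters in $\mathsf{E}(S)$, and Lemma~\ref{lem:pink}(3) asserts that this set is closed in $\mathsf{X}(S)$.

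For invariance I would invoke the criterion recalled in Section~2: a subset $X \subseteq G_{o}$ is invariant exactly when $g^{-1}g \in X \Leftrightarrow gg^{-1} \in X$ for every $g \in G$. So fix an arbitrary $g = A \in \mathsf{G}_{u}(S)$, i.e.\ a proper filter of $S$; then $g^{-1}g = \mathbf{d}(A)$ and $gg^{-1} = \mathbf{r}(A)$ are the two idempotent filters that must be compared. Using Lemma~\ref{lem:pink}(4) to identify $\partial S$ with the set of tight filters, the required equivalence becomes: $\mathbf{d}(A)$ is tight if and only if $\mathbf{r}(A)$ is tight. But Lemma~\ref{lem:crystal} says precisely that $A$ is tight $\iff$ $\mathbf{d}(A)$ is tight $\iff$ $\mathbf{r}(A)$ is tight, so in particular $\mathbf{d}(A)$ is tight iff $\mathbf{r}(A)$ is. Hence $\partial S$ satisfies the invariance condition, and the corollary follows.

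The one bookkeeping point I would make explicit is the match between the two notions of ``tight filter'' in play: the tight filters in the meet semilattice $\mathsf{E}(S)$ that appear in Lemma~\ref{lem:pink}, and the idempotent tight filters in $S$ delivered by $\mathbf{d}$ and $\mathbf{r}$ in Lemma~\ref{lem:crystal}. Since an idempotent proper filter $B$ of $S$ is reconstructed from $B \cap \mathsf{E}(S)$ as $(B \cap \mathsf{E}(S))^{\uparrow}$ --- because everything below an idempotent is idempotent --- and since a cover of an idempotent consists entirely of idempotents, the defining condition for tightness is insensitive to whether $B$ is read inside $S$ or inside $\mathsf{E}(S)$; this is the same correspondence as in Lemma~\ref{lem:carrot}, extended from ultrafilters to all proper filters. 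There is no genuine obstacle here: the content of the corollary is entirely contained in Lemmas~\ref{lem:pink} and~\ref{lem:crystal}, which is why it is billed as immediate.
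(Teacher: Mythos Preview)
Your proposal is correct and matches the paper's approach exactly: the paper presents the corollary as immediate from Lemma~\ref{lem:pink} (closedness and the identification of $\partial S$ with the tight filters) together with Lemma~\ref{lem:crystal} (invariance via $\mathbf{d}(A)$ tight $\Leftrightarrow$ $\mathbf{r}(A)$ tight), and you have simply unpacked this. Your extra paragraph on the correspondence between idempotent tight filters in $S$ and tight filters in $\mathsf{E}(S)$ is a helpful clarification that the paper leaves implicit.
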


\begin{remark}{\em Exel's definition of a tight character \cite[page 54]{Exel} and our definition of a tight filter are two ways of looking at the same class of objects.
The explanation for these different characterizations simply boils down to the nature of the basis that one chooses to work with;
Exel's is more generous and ours more parsimonious.
In our filter setting, Exel's basic open sets have the form $U_{X,Y}$ where $X$ and $Y$ are finite sets
and $U_{X,Y}$ is defined to be those proper filters that contain all of the elements of $X$ but omit all of the elements of $Y$.
When $X$ is non-empty, it is easy to show that $U_{X,Y}$ is equal to a set of the form $U_{a;a_{1},\ldots, a_{n}}$ for some $a$ and subset $\{a_{1},\ldots, a_{n}\} \subseteq a^{\downarrow}$.
When $X$ is empty, we have that $U_{\varnothing, Y} = \overline{\bigcup_{e \in Y} U_{e}}$;
observe that the sets $U_{e}$ are compact in the (Hausdorff) patch topology and so closed.
We now use the fact that the sets of the form $U_{a;a_{1}, \ldots, a_{n}}$ form a basis for the patch topology.}
\end{remark}

\section{The Stone groupoid of the Exel completion: proof of Theorem~\ref{them:four}}

We can now prove our second main result.
Let $S$ be an inverse semigroup.
By Corollary~\ref{cor:quisling} and Lemma~\ref{lem:water}, it follows that the reduction $\mathsf{G}_{u}(S)|_{\partial S}$ is a Boolean groupoid;
it is the {\em tight groupoid} of $S$ \cite{Exel}
and can simply be regarded as the groupoid of tight filters with the restriction of the patch topology.
We denote this groupoid by $\mathsf{G}_{t}(S)$.
We call the associated Boolean inverse semigroup $\mathsf{KB}(\mathsf{G}_{t}(S))$ the {\em tight semigroup} of $S$.
There is a map from $S$ to $\mathsf{KB}(\mathsf{G}_{t}(S))$, which we shall denote by $\eta$, which takes $a$ 
to the set of tight filters containing $a$, a set we shall denote by $U_{a}^{t}$.
By Lemma~\ref{lem:pink} and Lemma~\ref{lem:hedgehog},
$a \neq 0$ implies that $U_{a}^{t} \neq \varnothing$.

\begin{lemma}\label{lem:red} 
The map $\eta$ is a cover-to-join map.
\end{lemma}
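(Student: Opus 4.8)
The plan is to verify directly the two requirements in the definition of a cover-to-join map: that $\eta$ is a homomorphism of inverse semigroups (sending zero to zero), and that it converts covers in $S$ to joins in $\mathsf{KB}(\mathsf{G}_{t}(S))$. For the homomorphism part, I would first recall that $\eta(a) = U_{a}^{t}$ is by construction the restriction to $\partial S$ of the partial bisection $U_{a}$ of the universal groupoid, i.e. $U_{a}^{t} = U_{a} \cap \mathsf{G}_{t}(S) = U_{a}\cap \mathbf{d}^{-1}(\partial S)$. Since restriction of partial bisections of $\mathsf{G}_{u}(S)$ to the invariant subset $\partial S$ is a morphism of Boolean inverse semigroups (this is exactly the morphism $\theta$ appearing in the discussion preceding Proposition~\ref{prop:earth}, applied to $G = \mathsf{G}_{u}(S)$ and $X = \partial S$, which is closed and invariant by Corollary~\ref{cor:quisling}), and since $a \mapsto U_{a}$ is a homomorphism $S \to \mathsf{B}(S) \subseteq \mathsf{L}(\mathcal{L}(S))$ by Lemma~\ref{lem:hill}, the composite $a \mapsto U_{a}^{t}$ is a homomorphism; that $U_{0}^{t} = \varnothing$ is immediate. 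Alternatively, one can give the short direct computation $U_{a}^{t} U_{b}^{t} = U_{ab}^{t}$ and $(U_{a}^{t})^{-1} = U_{a^{-1}}^{t}$ using Lemma~\ref{lem:hill}(3),(4) together with the fact that $\mathsf{G}_{t}(S)$ is a reduction to an invariant subset (so the multiplications agree).

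The substantive point is the cover-to-join condition. Suppose $\{a_{1}, \ldots, a_{m}\} \rightarrow a$ is a cover of $a$ in $S$. I must show $U_{a}^{t} = U_{a_{1}}^{t} \vee \cdots \vee U_{a_{m}}^{t}$ in $\mathsf{KB}(\mathsf{G}_{t}(S))$. First, each $a_{i} \leq a$, so $U_{a_{i}}^{t} \subseteq U_{a}^{t}$, and the $U_{a_{i}}^{t}$ are pairwise compatible (being below a common partial bisection of an inverse semigroup); hence their join exists and is contained in $U_{a}^{t}$, and as subsets of the groupoid this join is just the union $U_{a_{1}}^{t} \cup \cdots \cup U_{a_{m}}^{t}$. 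So it remains to prove the reverse inclusion $U_{a}^{t} \subseteq U_{a_{1}}^{t} \cup \cdots \cup U_{a_{m}}^{t}$: every tight filter $A$ containing $a$ contains some $a_{i}$. But this is precisely the defining property of a tight filter applied to the cover $C = \{a_{1}, \ldots, a_{m}\}$ of $a$: since $a \in A$ and $C \rightarrow a$, we have $C \cap A \neq \varnothing$, i.e. $a_{i} \in A$ for some $i$, so $A \in U_{a_{i}}^{t}$. This establishes the set equality, and since the join in $\mathsf{KB}(\mathsf{G}_{t}(S))$ of compatible elements is computed as union of the underlying sets, we conclude $U_{a}^{t} = U_{a_{1}}^{t} \vee \cdots \vee U_{a_{m}}^{t}$.

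I expect the main (minor) obstacle to be purely bookkeeping: being careful that the join in the tight semigroup really is the set-theoretic union, which requires knowing the $U_{a_{i}}^{t}$ are compatible as partial bisections — this follows because they all lie below $U_{a}^{t}$ in an inverse semigroup, and in $\mathsf{L}(G)$ the order is inclusion and compatible joins are unions (Proposition~\ref{prop:lb}, Proposition~\ref{prop:earth}). One should also note that the tight filters containing a given element form an open set in $\mathsf{G}_{t}(S)$, so $U_{a}^{t}$ is open; it is a compact-open partial bisection because $\mathsf{G}_{t}(S) = \mathsf{G}_{u}(S)|_{\partial S}$ is a Boolean groupoid (Lemma~\ref{lem:water}) and $U_{a}^{t}$ is the image under a coherent functor of a compact-open set, or simply because $U_{a}$ is compact-open in $\mathsf{G}_{u}(S)$ and intersecting with the closed set $\mathbf{d}^{-1}(\partial S)$ preserves compactness (Lemma~\ref{lem:fruit}). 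With these observations in place the verification is complete.
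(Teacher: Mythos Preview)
Your proposal is correct and follows essentially the same route as the paper: the key step in both is that a tight filter containing $a$ must, by the very definition of tightness, meet any cover $\{a_{1},\ldots,a_{m}\} \rightarrow a$, which yields $U_{a}^{t} = \bigcup_{i} U_{a_{i}}^{t}$ and hence the join identity. The only difference is that the paper also records the converse observation $U_{a;a_{1},\ldots,a_{m}} \cap \mathsf{G}_{t}(S) = \varnothing \Rightarrow \{a_{1},\ldots,a_{m}\} \rightarrow a$ (via an ultrafilter argument), which is not needed for this lemma but is used immediately afterwards in identifying the ideal $I_{\partial S}$ for Theorem~\ref{them:four}.
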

\begin{proof} We begin with an observation
Let $a_{1}, \ldots, a_{m} \leq a$.
Then $U_{a;a_{1},\ldots,a_{m}} \cap \mathsf{G}_{t}(S) = \emptyset$ if and only if $\{a_{1}, \ldots, a_{m}\} \rightarrow a$.
Suppose first that  $\{a_{1}, \ldots, a_{m}\} \rightarrow a$ then any tight filter containing $a$ must contain at least one of the $a_{i}$,
for some $i$. It follows that $U_{a;a_{1},\ldots,a_{m}} \cap \mathsf{G}_{t}(S) = \emptyset$.
Conversely, let $U_{a;a_{1},\ldots,a_{m}} \cap \mathsf{G}_{t}(S) = \emptyset$.
Suppose that $\{a_{1},\ldots,a_{m}\}$ is not a cover of $a$.
Then there is some $0 \neq x \leq a$ such that $x \wedge a_{i} = 0$ for all $1 \leq i \leq m$.
By Lemma~\ref{lem:hedgehog}, there is an ultrafilter $A$ containing $x$.
But, clearly, $a_{i} \notin A$ for all $1 \leq i \leq m$. 
Thus $A \in U_{a;a_{1}, \ldots, a_{m}}$.
But ultrafilters are tight filters by Lemma~\ref{lem:pink}.
This contradicts our assumption that 
$U_{a;a_{1},\ldots,a_{m}} \cap \mathsf{G}_{t}(S) = \emptyset$.
It follows that $\{a_{1}, \ldots,a_{m}\} \rightarrow a$.

Let $\{a_{1}, \ldots, a_{m}\} \rightarrow a$.
Then $\eta (a_{1}) \vee \ldots \vee \eta (a_{m}) \leq \eta (a)$.
Suppose that the inequality were strict.
Then there would be a tight filter containing $a$ that omitted $a_{1}, \ldots, a_{m}$
but this is impossible by the first part of the proof.
It follows that $\eta (a) = \bigvee_{i=1}^{m} \eta (a_{i})$.
\end{proof}

We shall now prove that the Stone groupoid of the Exel completion is the tight groupoid.
Recall that by Theorem~\ref{them:two},  
$\mathsf{G}(\mathsf{B}(S))$ is just the universal groupoid $\mathsf{G}_{u}(S)$.
By Corollary~\ref{cor:quisling}, $\partial S$ is a closed invariant subspace of the space of identities of $\mathsf{G}_{u}(S)$.
Thus by Proposition~\ref{prop:earth}, we have the following isomorphism of Boolean inverse semigroups:
$$\mathsf{KB} (\mathsf{G}_{u}(S)|_{\partial S}) \cong \mathsf{KB}(\mathsf{G}_{u}(S)) / I_{\partial S}.$$
By definition, 
$\mathsf{G}_{u}(S)|_{\partial S} = \mathsf{G}_{t}(S)$
is the tight groupoid.
By Theorem~\ref{them:two}, the Boolean inverse semigroup $\mathsf{KB}(\mathsf{G}_{u}(S))$ is just $\mathsf{B}(S)$, the Booleanization of $S$.
We therefore have that
$$\mathsf{KB} (\mathsf{G}_{t}(S)) \cong \mathsf{B}(S) / I_{\partial S}.$$
It therefore remains to identify the elements of the additive ideal $I_{\partial S}$.
To do this, it is enough to identify the elements of the form $U_{a;a_{1}, \ldots,a_{m}}$ which belong to $I_{\partial S}$.
However, from the definitions, $U_{a:a_{1}, \ldots, a_{m}}^{t} = \varnothing$ if and only if $\{a_{1}, \ldots, a_{m}\} \rightarrow a$.
Thus the elements of the form $U_{a;a_{1}, \ldots,a_{m}}$ which belong to $I_{\partial S}$ are precisely those for which  $\{a_{1}, \ldots, a_{m}\} \rightarrow a$.
We have therefore proved that $\mathsf{B}(S) / I_{\partial S} = \mathsf{T}(S)$, the Boolean inverse semigroup described in Section~7.
It is now immediate by Theorem~\ref{them:ncsd}, that the Stone groupoid of the Exel completion of $S$ is the tight groupoid.\\

This concludes the proof of Theorem~\ref{them:four}.

\section{Tiling semigroups}

Kellendonk associated inverse semigroups with (aperiodic) tilings and then showed how to construct \'etale groupoids and $C^{\ast}$-algebras from them \cite{Kellendonk1, Kellendonk2}.
The construction of the inverse semigroups was formalized in \cite{KL}
and the construction of the \'etale groupoid from the inverse semigroup was described in \cite{Lenz}.
Within the framework of this paper, inverse semigroups were being considered in which the tight filters were the ultrafilters.
Meet semilattices with this property were termed {\em compactable} in \cite{Lawson2010} where they were characterized \cite[Theorem 2.10]{Lawson2010} in
terms introduced by \cite{Lenz}.
A more concrete sufficient condition was formulated as \cite[Proposition 2.14]{Lawson2010}.
This theme was taken up in a more general frame in \cite{Law5} where an inverse semigroup was termed {\em pre-Boolean} if
every tight filter was an ultrafilter. 
Neither of the terms `compactable' or `pre-Boolean' is satisfactory but these examples show that
a single term is needed to signify that all tight filters are ultrafilters;
the term {\em finitely complex} is a possibility.
Both papers \cite{EGS} and \cite{LL} focus on the inverse semigroups constructed from tilings
and the conditions on the tiling that force the tight filters to be ultrafilters.


\section{Abstract and concrete Cuntz-Krieger relations}

We may summarize what we have found in this paper as follows.
Let $S$ be an inverse semigroup
and 
let $\{a_{1}, \ldots, a_{m} \} \rightarrow a$ be a cover of the element $a$ in $S$.
Then this gives rise to a relation $a = \bigvee_{i=1}^{m} a_{i}$ in the Booleanization $\mathsf{B}(S)$ of $S$
(with an appropriate abuse of notation).
When $\mathsf{B}(S)$ is factored out by all such relations,
we have proved that we get the Exel completion $\mathsf{T}(S)$ of $S$;
in this case, its Stone groupoid is precisely Exel's tight groupoid $\mathsf{G}_{t}(S)$.
In this paper, we treat the relations of the form  $a = \bigvee_{i=1}^{m} a_{i}$  as Cuntz-Krieger relations
--- let us call them {\em abstract Cuntz-Krieger relations}.
It is natural to ask what evidence there is for this terminology.
Of course, Cuntz-Krieger relations are defined in rather concrete situations so to justify our claim,
it is enough to check that in those concrete situations, the abstract Cuntz-Krieger relations above give all and only
the concrete Cuntz-Krieger relations.
First of all, we may restrict our attention to relations involving only idempotents.
The following is proved as \cite[lemma~3.1(1)]{Law5};
it is a consequence of the fact that the principal order ideals $a^{\downarrow}$ and $\mathbf{d}(a)^{\downarrow}$ are
order isomorphic.

\begin{lemma}\label{lem:idpt} Let $S$ be an inverse semigroup and let $\theta \colon S \rightarrow T$ be a homomorphism 
to a Boolean inverse semigroup.
Then $\theta \colon S \rightarrow T$ is a cover-to-join map if and only if $\theta \colon \mathsf{E}(S) \rightarrow \mathsf{E}(T)$
is a cover-to-join map.
\end{lemma}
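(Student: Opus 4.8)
\textbf{Proof plan for Lemma~\ref{lem:idpt}.} The plan is to exploit the order isomorphism between the principal order ideals $a^{\downarrow}$ and $\mathbf{d}(a)^{\downarrow}$ in $S$, and the parallel order isomorphism between $\theta(a)^{\downarrow}$ and $\mathbf{d}(\theta(a))^{\downarrow} = \theta(\mathbf{d}(a))^{\downarrow}$ in $T$, in order to transfer covers and joins back and forth between arbitrary elements and idempotents. One direction is trivial: if $\theta \colon S \rightarrow T$ is a cover-to-join map then its restriction to $\mathsf{E}(S)$ is automatically a cover-to-join map, since a cover of an idempotent consisting of elements of $\mathsf{E}(S)$ is in particular a cover, and $\theta$ of a join of idempotents is the join of the images. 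So the content is the converse.

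\medskip\noindent
First I would recall the explicit form of the order isomorphism $a^{\downarrow} \rightarrow \mathbf{d}(a)^{\downarrow}$: it sends $x \leq a$ to $x^{-1}x = \mathbf{d}(x)$, with inverse $e \mapsto ae$ for $e \leq \mathbf{d}(a)$. The key observation is that this isomorphism preserves the orthogonality and compatibility relations and preserves meets and joins when they exist, because for $x,y \leq a$ one has $x \wedge y \leq a$, $\mathbf{d}(x \wedge y) = \mathbf{d}(x) \wedge \mathbf{d}(y)$, and $x \perp y$ iff $\mathbf{d}(x) \perp \mathbf{d}(y)$; similarly on the $T$ side. Consequently, $\{a_1, \ldots, a_m\} \rightarrow a$ is a cover in $S$ if and only if $\{\mathbf{d}(a_1), \ldots, \mathbf{d}(a_m)\} \rightarrow \mathbf{d}(a)$ is a cover of idempotents (note $\mathbf{d}(a_i) \leq \mathbf{d}(a)$ since $a_i \leq a$). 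This is exactly the statement of \cite[Lemma~3.1(1)]{Law5} that the excerpt cites.

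\medskip\noindent
Now suppose $\theta \colon \mathsf{E}(S) \rightarrow \mathsf{E}(T)$ is a cover-to-join map, and let $\{a_1, \ldots, a_m\} \rightarrow a$ be a cover in $S$. By the previous paragraph, $\{\mathbf{d}(a_1), \ldots, \mathbf{d}(a_m)\} \rightarrow \mathbf{d}(a)$, so by hypothesis $\theta(\mathbf{d}(a)) = \bigvee_{i=1}^{m} \theta(\mathbf{d}(a_i))$ in $T$, i.e. $\mathbf{d}(\theta(a)) = \bigvee_{i=1}^{m} \mathbf{d}(\theta(a_i))$. I then transport this back along the order isomorphism $\mathbf{d}(\theta(a))^{\downarrow} \rightarrow \theta(a)^{\downarrow}$ in $T$: multiplying on the left by $\theta(a)$ and using that $\theta(a_i) = \theta(a)\mathbf{d}(\theta(a_i))$ (because $a_i = a\,\mathbf{d}(a_i)$ and $\theta$ is a homomorphism), together with the fact that this isomorphism preserves joins, gives $\theta(a) = \bigvee_{i=1}^{m} \theta(a_i)$. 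Hence $\theta \colon S \rightarrow T$ is a cover-to-join map.

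\medskip\noindent
\emph{Main obstacle.} The only real work is verifying that the order isomorphisms $a^{\downarrow} \cong \mathbf{d}(a)^{\downarrow}$ genuinely commute with taking covers and joins — i.e. that $x \wedge a_i \neq 0$ is equivalent to $\mathbf{d}(x) \wedge \mathbf{d}(a_i) \neq 0$ for elements below $a$, and that the isomorphism carries the join $\bigvee \mathbf{d}(a_i)$ to $\bigvee a_i$. Both follow from \cite[Lemma~1.4.11]{Law1} (compatible elements have meets, computed componentwise on domains and ranges) and the fact that an order isomorphism between distributive lattices preserves finite joins, but one must be careful that all the relevant meets and joins actually exist, which they do precisely because everything in sight lives in a single principal order ideal and is therefore a compatible set. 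Since this is exactly the content already packaged in \cite[Lemma~3.1(1)]{Law5}, I would invoke that result and keep the argument short.
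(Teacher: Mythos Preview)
Your proposal is correct and follows exactly the approach the paper indicates: the paper does not give a detailed proof but simply cites \cite[Lemma~3.1(1)]{Law5} and notes that it is a consequence of the order isomorphism $a^{\downarrow} \cong \mathbf{d}(a)^{\downarrow}$, which is precisely what you unpack. Your explicit verification that covers and joins transfer along this isomorphism (using \cite[Lemma~1.4.11]{Law1} for the meets and distributivity in $T$ for the joins) fills in the details the paper leaves to the reference.
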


Next, we may focus on those relations determined by certain distinguished idempotents.
The following is proved as \cite[lemma~3.1(2)]{Law5}.

\begin{lemma}\label{lem:d} Let $S$ be an inverse semigroup and let $\{e_{i} \colon i \in I\}$ be an idempotent transversal of the of the
non-zero $\mathscr{D}$-classes. 
Let $\theta \colon S \rightarrow T$ be a homomorphism to a Boolean inverse semigroup.
Then $\theta$ is a cover-to-join map if and only if it is a cover-to-join map for the distinguished family of idempotents.
\end{lemma}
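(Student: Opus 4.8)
The statement to prove is Lemma~\ref{lem:d}: that a homomorphism $\theta \colon S \rightarrow T$ to a Boolean inverse semigroup is a cover-to-join map if and only if it is a cover-to-join map when restricted to the covers supported below a distinguished family $\{e_i : i \in I\}$ of idempotents, one from each non-zero $\mathscr{D}$-class. By Lemma~\ref{lem:idpt} I may immediately reduce to the case where the covers involve only idempotents, so the task is to show that cover-to-join for covers of each $e_i$ forces cover-to-join for covers of an arbitrary idempotent $f$. The plan is to exploit the fact, already invoked in the paper, that if $f \, \mathscr{D} \, e_i$ then there is an element $x$ with $\mathbf{d}(x) = f$ and $\mathbf{r}(x) = e_i$, and conjugation by $x$ (the map $g \mapsto x g x^{-1}$ on $f^{\downarrow}$, with inverse $h \mapsto x^{-1} h x$ on $e_i^{\downarrow}$) is an order isomorphism $f^{\downarrow} \to e_i^{\downarrow}$ of the respective principal order ideals, indeed of the Boolean algebras they carry.

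The key steps, in order, would be: (1) fix an idempotent $f$ and a cover $\{f_1, \ldots, f_m\} \rightarrow f$; pick the distinguished $e_i$ with $e_i \, \mathscr{D} \, f$ and an element $x$ with $f \stackrel{x}{\rightarrow} e_i$. (2) Show that $\{x f_1 x^{-1}, \ldots, x f_m x^{-1}\}$ is a cover of $e_i$: this is the transport-of-structure argument, using that $g \mapsto x g x^{-1}$ is an order isomorphism $f^{\downarrow} \to e_i^{\downarrow}$ that preserves and reflects meets (and in particular non-zero-ness of meets), so the defining condition of a cover is carried across — if $0 \neq y \leq e_i$ then $x^{-1} y x$ is a non-zero element of $f^{\downarrow}$, hence meets some $f_j$ non-trivially, hence $y$ meets $x f_j x^{-1}$ non-trivially. (3) Apply the hypothesis to conclude $\theta(e_i) = \bigvee_j \theta(x f_j x^{-1}) = \theta(x)\bigl(\bigvee_j \theta(f_j)\bigr)\theta(x)^{-1}$, using that $\theta$ is a homomorphism and that joins of compatible elements in $T$ are respected (the elements $\theta(x f_j x^{-1})$ are pairwise compatible since they lie below $\theta(e_i)$, so the join exists in the Boolean inverse semigroup $T$ and $\theta$ preserves it). (4) Conjugate back inside $T$ by $\theta(x)^{-1}$ to obtain $\theta(f) = \theta(x)^{-1}\theta(e_i)\theta(x) = \bigvee_j \theta(f_j)$, giving the cover-to-join property at $f$. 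The converse direction is trivial, since covers of the distinguished idempotents are a subclass of all covers.

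The main obstacle I anticipate is step~(3)–(4): one must be careful that $\theta$ genuinely preserves the relevant joins and that conjugation in $T$ by $\theta(x)^{\pm 1}$ distributes over them. The first point is fine because $\theta$ is a homomorphism of inverse semigroups and — once we know the image elements are pairwise compatible — compatible joins in a Boolean inverse semigroup are determined by the underlying inverse semigroup structure together with the algebraic skew-join operation (Lemma~\ref{lem:fanta}, Lemma~\ref{lem:irn}), which every homomorphism respects; alternatively one simply checks that $\bigvee_j \theta(f_j)$ is the least upper bound of the $\theta(f_j)$ in $T$ by transporting the join $\bigvee_j \theta(x f_j x^{-1}) = \theta(e_i)$. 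The second point is the standard fact that in any inverse semigroup, conjugation $t \mapsto s t s^{-1}$ restricted to $\mathbf{d}(s)^{\downarrow}$ is an isomorphism onto $\mathbf{r}(s)^{\downarrow}$ and hence, in the Boolean case, commutes with finite compatible joins. Once these two bookkeeping points are settled, the argument is essentially a transport-of-structure along a $\mathscr{D}$-equivalence, and the only genuinely substantive input is the order-isomorphism of principal order ideals already cited from \cite[Lemma~3.1]{Law5}.
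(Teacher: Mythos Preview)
Your argument is correct and is precisely the transport-of-structure proof one expects; the paper itself does not give a proof but simply cites \cite[Lemma~3.1(2)]{Law5}, so you have effectively reconstructed the cited argument. The only remark I would make is that your discussion of the ``obstacle'' in step~(3)--(4) is slightly off-target: there are no joins in $S$ for $\theta$ to preserve, so Lemmas~\ref{lem:fanta} and~\ref{lem:irn} are not needed here. What you actually use is (i) that $\theta$ is a semigroup homomorphism, so $\theta(xf_{j}x^{-1}) = \theta(x)\theta(f_{j})\theta(x)^{-1}$, and (ii) that multiplication in the Boolean inverse semigroup $T$ distributes over finite compatible joins, which is part of the definition of a distributive inverse semigroup. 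With that clarification, your steps~(1)--(4) go through cleanly: conjugation by $x$ carries the cover of $f$ to a cover of $e_{i}$ (this is the fact quoted in Example~11.3 from \cite[Section~4.1]{LL}), the hypothesis gives the join equality at $e_{i}$, and conjugating back by $\theta(x)^{-1}$ in $T$ --- using distributivity --- yields $\theta(f) = \bigvee_{j}\theta(f_{j})$.
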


\begin{example}{\em Our first example goes right back to the origin of the Cuntz-Krieger relations
and Cuntz's original paper \cite{Cuntz}.
We shall treat everything in the context of (Boolean) inverse semigroups.
An inverse semigroup $S$ is said to be {\em $0$-bisimple} if it has exactly one non-zero $\mathscr{D}$-class.
Let $S$ be a Boolean inverse monoid.
Then in the light of Lemma~\ref{lem:idpt} and Lemma~\ref{lem:d}, we can focus entirely on the covers of the identity.
An inverse semigroup is said to be $E^{\ast}$-unitary if $0 \neq e \leq a$, where $e$ is an idempotent, implies that $a$ is an idempotent.
In \cite[Remark 2.3]{Lenz}, it is proved that an $E^{\ast}$-unitary inverse semigroup is a $\wedge$-semigroup.
The most important class of examples of $E^{\ast}$-unitary, $0$-bisimple inverse monoids are the polycyclic inverse monoids $P_{n}$ ($n \geq 2$).
Recall that 
$$P_{n} = \langle a_{1}, \ldots, a_{n} \colon a_{i}^{-1}a_{i} = 1, a_{i}^{-1}a_{j} = 0 \rangle.$$
The Exel completion of $P_{n}$, denoted by $C_{n}$ and called the {\em Cuntz inverse monoid}, was constructed in \cite{Law2007, Law2007b}, developing aspects of \cite{Birget}, and
was further studied in \cite{LS}.
Representations of $C_{n}$ by certain kinds of partial bijections were constructed in \cite{JL}, based on the work in \cite{BJ},
and subsequently extended in \cite{HW}.
We need only focus on the covers of the identity.
An immediate example is the cover $\{a_{1}a_{1}^{-1}, \ldots, a_{n}a_{n}^{-1}\} \rightarrow 1$.
Observe that $\{a_{1}, \ldots, a_{n}\}$ is a maximal prefix code in the free monoid $A_{n}^{\ast} = \{a_{1}, \ldots, a_{n}\}^{\ast}$.
In fact, the covers of $1$ are in bijective correspondence with the maximal prefix codes of $A_{n}^{\ast}$.
The following is immediate from \cite[Section 4.1]{LL}; recall that in an inverse semigroup if $e$ is an idempotent then $aea^{-1}$ is an idempotent:
let $S$ be an inverse semigroup.
If $\{e_{1}, \ldots, e_{m} \} \rightarrow e$, where $e$ is an idempotent, and $a$ is any element,
then either $aea^{-1} = 0$ or $\{ae_{1}a^{-1}, \ldots, ae_{m}a^{-1} \} \rightarrow aea^{-1}$.
By this result and \cite[Proposition~II.4.7]{BP}, we therefore have the following:
the Cuntz-Krieger ideal of $\mathsf{B}(P_{n})$ is generated by 
$$1 \setminus (a_{1}a_{1}^{-1} \vee \ldots \vee a_{n}a_{n}^{-1}).$$
We may therefore regard the Cuntz inverse monoid as being the quotient of the Booleanization $\mathsf{B}(P_{n})$ factored
out by the relation given by $1 = a_{1}a_{1}^{-1} \vee \ldots \vee a_{n}a_{n}^{-1}$.}
\end{example}

\begin{example}{\em The Cuntz inverse monoids can be generalized to what we call then {\em Cuntz-Krieger monoids}, $CK_{G}$, where $G$ is a finite graph \cite{JL2014}.
Thus we now consider the paper \cite{CK} from our perspective.
From a (finite) directed graph $G$, one constructs a free category and from that, in a manner reminiscent of the way in which the polycyclic
inverse monoids are constructed from free monoids, one constructs the so-called {\em graph inverse semigroups} $P_{G}$.
The Exel completion of $P_{G}$ is called the {\em Cuntz-Krieger semigroup}, $CK_{G}$.
In \cite[Theorem~2.1]{JL2014} an abstract characterization of graph inverse semigroups is given.
In particular, each non-zero $\mathscr{D}$-class has a unique maximal idempotent.
We may therefore restrict attention to covers of maximal idempotents.
If the graph $G$ has the property that the in-degree of each vertex is finite, then each maximal idempotent $e$ is {\em pseudofinite}
defined as follows: denote by $\hat{e}$ the set of all idempotents $f$ such that $f < e$ and $e$ covers $f$;
the idempotents in $\hat{e}$ are therefore those immediately below $e$;
we assume that $\hat{e}$ is finite and that if $g < e$ then $g \leq f < e$ for some $f \in \hat{e}$.
It follows that for each maximal idempotent, we have that $\hat{e} \rightarrow e$.
The inverse semigroups $P_{G}$ are $E^{\ast}$-unitary (and so are $\wedge$-semigroups) and their semilattices of idempotents are {\em unambiguous}
which means that if $0 \neq e \leq i,j$, where $e,i,j$ are all idempotents, then $i \leq j$ or $j \leq i$.
This implies that we can restrict attention to covers that consist of orthogonal elements (as in the case of maximal prefix codes in free monoids)
\cite[Corollary]{JL2014}. By an argument analogous to the one used in \cite[Lemma~3.9]{JL2014},
the Cuntz-Krieger ideal of $\mathsf{B}(P_{G})$ is generated by elements of the form
$$e \setminus \left( \bigvee_{f \in \hat{e}} f \right)$$
where $e$ is a maximal idempotent in $P_{G}$.}
\end{example}

The two examples above show that what we term `abstract Cuntz-Krieger relations' do agree with the concrete 
Cuntz-Krieger relations at least for suitably nice Cuntz-Krieger algebras.
The most general class of structures for which concrete Cuntz-Krieger relations have been introduced are the higher-rank graphs \cite{FMY, KP, RSY, RS}.
The relationship between what we term `abstract Cuntz-Krieger relations' and `concrete Cuntz-Krieger relations' was the subject of \cite{DM}
and served as one of the inspirations for our work.
The authors there prove a theorem, (\cite[Theorem~3.7]{DM}), which in our terminology states that for the inverse semigroups
arising as the inverse semigroups of zigzags in the countable, finitely aligned categories of paths of Spielberg \cite{Sp1}
the abstract and concrete Cuntz-Krieger relations coincide.
This result therefore applies in particular to finitely aligned higher-rank graphs.

\begin{remark}{\em It is worth noting that the Introduction to Spielberg's paper \cite{Sp1}
focuses on the nature of the concrete Cuntz-Krieger relations.
In addition, it also highlights the nature of the boundary which we have termed the `Cuntz-Krieger boundary'.}
\end{remark}

\nocite{*}


\end{document}